\documentclass[11pt, oneside]{article}

\topmargin-.5in
\textheight9in
\oddsidemargin0in
\textwidth6.5in

\usepackage{epsfig}
\usepackage{tikz}
\usepackage{url}
\usepackage{amsmath,amstext,amssymb,amsfonts,amsthm}
\usepackage{mathrsfs}
\usepackage{xcolor}
\usepackage{mathtools}
\usepackage{graphicx}

\usetikzlibrary{fadings,shapes,arrows.meta,positioning}
\usepackage{hhline}

\usepackage{booktabs}
\usepackage{multicol}
\usepackage{multirow}
\usepackage[labelfont=bf]{caption}
\usepackage{subcaption}
\usepackage{siunitx}
\usepackage[title]{appendix}


\newtheorem{thm}{Theorem}[section]

\newtheorem{rem}{Remark}[section]

\newtheorem{exmp}{Example}

\numberwithin{equation}{section}
\numberwithin{figure}{section}

\newcommand{\dd}{\mathrm{d}}
\newcommand{\ii}{\mathrm{i}}

\tikzfading[name=fade right, left color=transparent!0, right color=transparent!100]
\tikzfading[name=fade left, right color=transparent!0, left color=transparent!100]
\tikzfading[name=fade down, bottom color=transparent!0, top color=transparent!100]
\tikzfading[name=fade up, top color=transparent!0, bottom color=transparent!100]
\allowdisplaybreaks[4]

\title{
Exponential Time Differencing Runge-Kutta Discontinuous Galerkin (ETD-RKDG) Methods for Nonlinear Degenerate Parabolic Equations 
}
\author{
Ziyao Xu\footnotemark[2]
\and 
Yong-Tao Zhang\footnotemark[3]
}
\date{}
\begin{document}

\maketitle
\renewcommand{\thefootnote}{\fnsymbol{footnote}}
\footnotetext[2]{Department of Applied and Computational Mathematics and Statistics,
University of Notre Dame, Notre Dame, IN 46556, USA. E-mail: zxu25@nd.edu}
\footnotetext[3]{Department of Applied and Computational Mathematics and Statistics,
University of Notre Dame, Notre Dame, IN 46556, USA. E-mail: yzhang10@nd.edu}

\begin{center}
\small
\begin{minipage}{0.9\textwidth}
\textbf{Abstract.}
In this paper, we study high-order exponential time differencing Runge-Kutta (ETD-RK) discontinuous Galerkin (DG) methods for nonlinear degenerate parabolic equations.
This class of equations exhibits hyperbolic behavior in degenerate regions and parabolic behavior in non-degenerate regions, resulting in sharp wave fronts in the solution profiles and a parabolic-type time-step restriction, $\tau \sim O(h^2)$, for explicit time integration.
To address these challenges and solve such equations in complex domains, we employ DG methods with appropriate stabilizing limiters on unstructured meshes to capture the wave fronts and use ETD-RK methods for time integration to resolve the stiffness of parabolic terms.
We extract the system's stiffness using the Jacobian matrix of the DG  discretization for diffusion terms and adopt a nodal formulation to facilitate its computation.
The algorithm is described in detail for two-dimensional triangular meshes. We also conduct a linear stability analysis in one spatial dimension and present computational results on three-dimensional simplex meshes, demonstrating significant improvements in stability and large time-step sizes.

\medskip
\textbf{Key words.} Exponential time differencing Runge-Kutta, Discontinuous Galerkin, Nodal formulation, Degenerate parabolic equations, Stability, Simplex meshes.

\medskip

\end{minipage}
\end{center}
\setlength{\parindent}{2em}








\pagenumbering{arabic}

\section{Introduction}\label{Sect:intro}
In this paper, we study efficient high-order computational methods for nonlinear degenerate parabolic equations.
A prototypical example of this class of equations is the porous medium equation (PME):
\begin{equation}\label{eq:PME}
u_t=\Delta u^m, \quad \mathbf{x}\in\mathbb{R}^d,
\end{equation}
where $d$ is the spacial dimension, $\Delta :=\frac{\partial^2}{\partial x_1^2}+\cdots+\frac{\partial^2}{\partial{x}_{d}^{2}}$ is the Laplacian operator, and $m>1$ is a constant exponent.
The PME models the flow of an ideal gas in isentropic homogeneous porous medium \cite{Aronson}, derived by combining the equation of continuity $u_t+\nabla\cdot(\mathbf{v}u)=0$, Darcy's law $\mathbf{v}=-c_1\nabla p$ and the equation of state (EOS) for gases $p=c_2 u^{m-1}$.
In general, nonlinear degenerate parabolic equations may also include hyperbolic terms, in which case a more general formulation is given by
\begin{equation}\label{eq:GeneralEquation}
u_t + \nabla \cdot \mathbf{F}(u) = \Delta g(u), \quad \mathbf{x} \in \mathbb{R}^d, 
\end{equation}
where $\mathbf{F}(u) = \left(f_1(u), \ldots, f_d(u)\right)^T$ is the flux vector, and $g(u)$ is the degenerate parabolic term with $g'(u) = 0$ over some range of $u$.

The PME is known to exhibit hyperbolic behaviors in degenerate regions ($u=0$), with sharp wave fronts appearing at the boundary of its support and propagating at finite speed.
For instance, the well-known spherically symmetric Barenblatt solution \cite{Barenblatt_sol} in $d$ spacial dimensions is given by 
\begin{equation}\label{eq:barenblatt}
u(\mathbf{x},t)=t^{-p}\left[ \left(1-\frac{p(m-1)}{2dm}\frac{|\mathbf{x}|^2}{t^{2p/d}}\right)^{+}  \right]^{\frac{1}{m-1}},
\end{equation}
with the wave front located at 
\begin{equation*}
\{\mathbf{x}\in\mathbb{R}^{d}:|\mathbf{x}|^2=\frac{2dm}{p(m-1)}t^{2p/d}\},
\end{equation*}
where $p=\frac{1}{m-1+2/d}$ and $z^+:=\max\{z,0\}$.
An illustration of the solution profiles in one-, two-, and three-dimensional spaces for $m = 3$ at $t =1$ is shown in Figure \ref{fig:intro}.
\begin{figure}[!htbp]
 \centering
 \begin{subfigure}[b]{0.3\textwidth}
  \includegraphics[width=\textwidth]{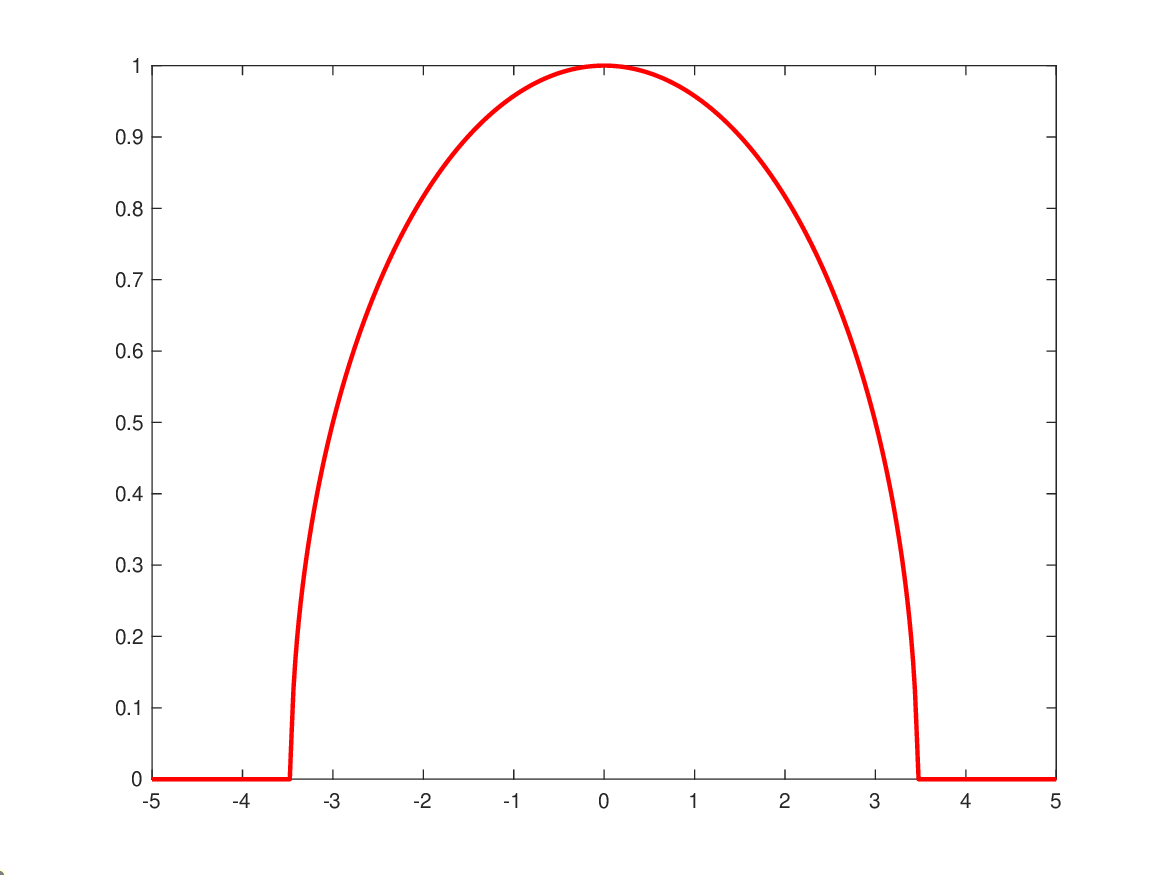}
  \caption{$d=1$}
 \end{subfigure}
 \begin{subfigure}[b]{0.3\textwidth}
  \includegraphics[width=\textwidth]{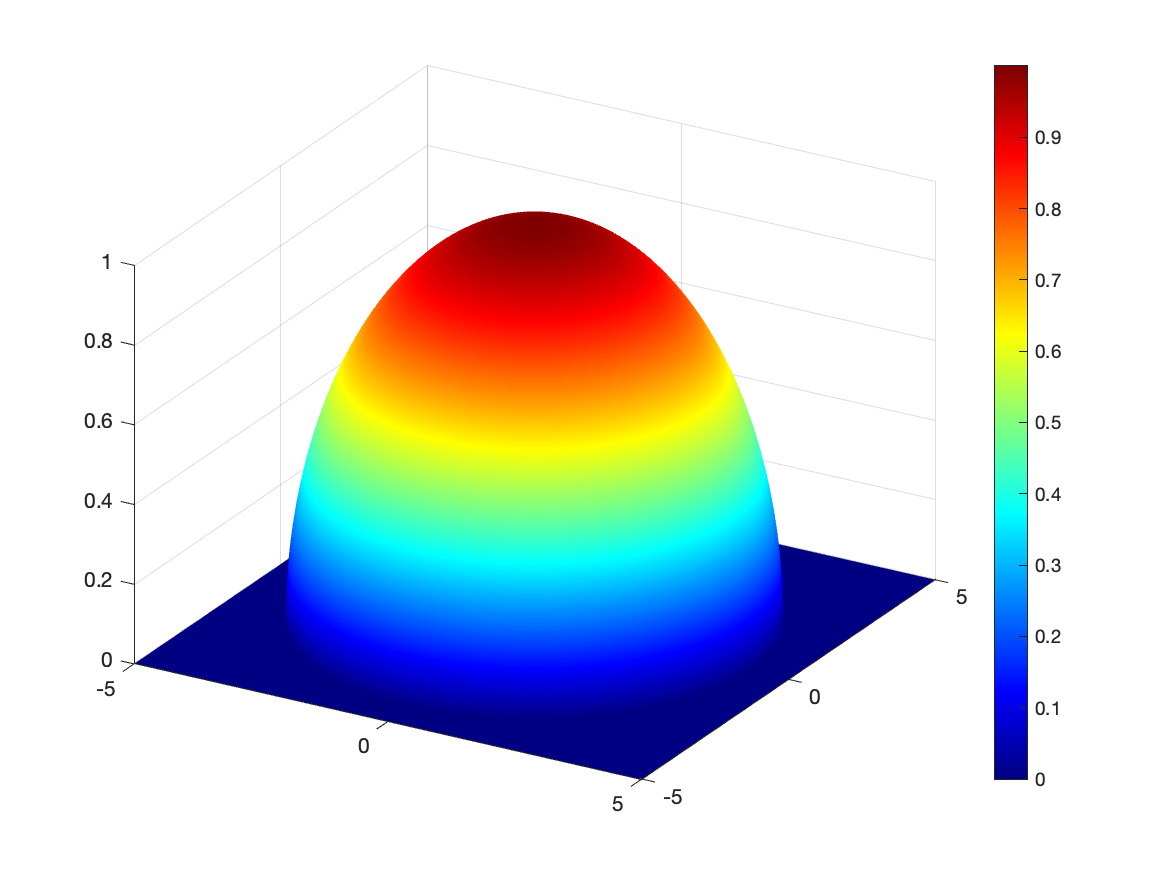}
  \caption{$d=2$}
 \end{subfigure}
 \begin{subfigure}[b]{0.3\textwidth}
  \includegraphics[width=\textwidth]{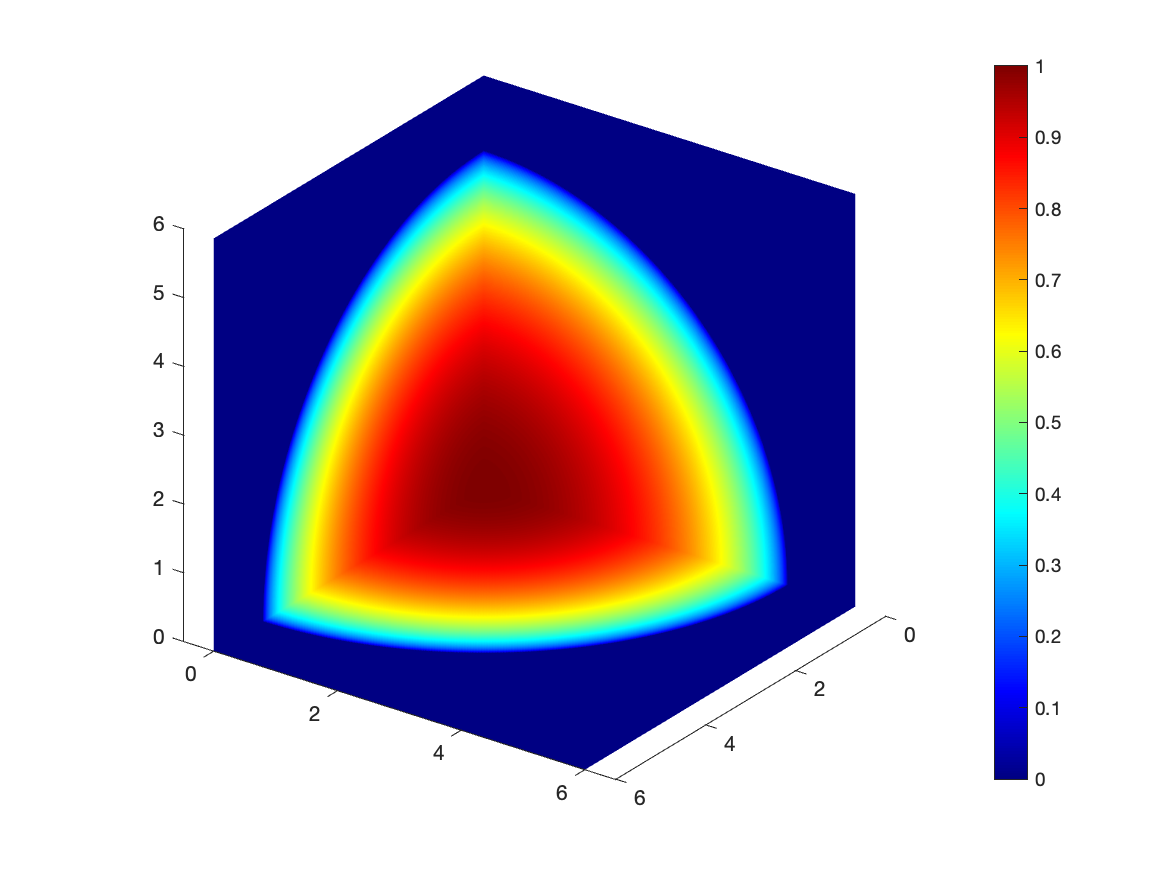}
  \caption{$d=3$}
 \end{subfigure} 
 \caption{\footnotesize{The Barenblatt solutions \eqref{eq:barenblatt} with $m=3$ at $t=1$ in different dimensions $d$.}}
 \label{fig:intro}
\end{figure}
On the other hand, the PME exhibits parabolic behaviors in non-degenerate regions ($u > 0$), with the diffusion coefficient $g'(u)=mu^{m-1}>0$, leading to smooth solution profiles and infinite propagation speed for perturbations.

Both the hyperbolic and parabolic features of the PME pose challenges for computational methods.
The sharp wave front at the boundary of the solution support often causes non-physical oscillations in numerical approximations, known as the Gibbs phenomenon, which weakens the robustness of simulations due to its anti-diffusive effects.
To address this issue, many numerical methods with nonlinear stabilization mechanisms have been developed, such as the weighted essentially non-oscillatory (WENO) methods \cite{Abe1,Abedian,arbogast2019finite,jiang2021high,liu2011high,zhang2022high}, discontinuous Galerkin methods \cite{liu2009direct,TaoLiuJiangLu,ZhWu,zhang2013maximum}, moving mesh methods \cite{baines2005moving,Ngo}, kinetic schemes \cite{Aregb}, relaxation schemes \cite{Cavalli}, among others \cite{Besse,Christ,Mage,Noch1,Qiu}.
The parabolicity, in contrast, smooths the solution profile in non-degenerate regions but simultaneously leads to an infinite propagation speed for information. 
As a result, the ODE systems resulting from various spatial discretization methods are typically stiffer than those arising from purely hyperbolic equations.
The widely used explicit time-marching methods for hyperbolic equations, including strong stability preserving Runge-Kutta and multi-step methods \cite{gottlieb2009high, gottlieb2011strong, gottlieb2001strong, shu1988total}, and Lax-Wendroff methods \cite{li2016two, qiu2005discontinuous} thus suffer from a severe time-step restriction, $\tau \sim O(h^2)$, where $\tau$ is the time-step size and $h$ is the spatial mesh size.
To relieve the time-step restriction, implicit treatments of the parabolic term can be adopted.
These methods include conditionally stable implicit Runge-Kutta methods \cite{ketcheson2009optimal} and L-stable Runge-Kutta methods \cite{arbogast2019finite}.
In the presence of hyperbolic terms, as in \eqref{eq:GeneralEquation}, implicit-explicit methods \cite{Kennedy, bürger2013regularized, boscarino2015linearly}, which handle the parabolic and hyperbolic terms separately, are widely used to allow for a time-step size $\tau \sim O(h)$.
Another important class of robust time-marching approaches is the exponential integrator methods \cite{Oster}, including exponential time differencing (ETD) multi-step and Runge-Kutta methods \cite{Beylk, cox2002exponential, DuQ, kassam2005fourth, liu2023exponential}, as well as implicit integration factor methods \cite{nie2006efficient, ChenZ, jiang2016krylov, liu2019krylov, LuZ}, which integrate the linear stiff components of ODEs exactly using exponential integrating factors.

In this paper, we focus on the discontinuous Galerkin (DG) method in its nodal formulation for solving the problem \eqref{eq:GeneralEquation}, and we use exponential time differencing Runge-Kutta (ETD-RK) methods to advance the resulting ODE system in time.
The DG method for hyperbolic equations was first proposed by Reed and Hill in 1973 to solve a steady linear neutron transport problem \cite{ReedHill}.
It employs piecewise polynomials that allow for discontinuities in the function space.
It was later developed by Cockburn et al. in a series of works \cite{DG1,DG2,DG3,DG4,DG5} into the Runge-Kutta DG methods for solving time-dependent nonlinear hyperbolic equations.
DG methods have been widely used ever since due to their advantages, including high-order accuracy, compact stencils, flexibility in handling complex geometries, ease of $h$-$p$ adaptivity, and high parallel efficiency.
The low regularity of solutions in degenerate parabolic equations makes DG methods also a natural choice for discretizing the parabolic terms.
Several valid DG methods have been developed for this purpose, including the interior penalty DG method \cite{arnold1982interior,wheeler1978elliptic}, direct DG method \cite{liu2009direct, liu2010direct}, local DG method \cite{LDG,TaoLiuJiangLu, ZhWu}, compact DG method \cite{peraire2008compact}, and ultra-weak DG method \cite{ultraweakDG, ChenZ}, among others.
In this work, we apply the DG formulations in \cite{DG2} and \cite{ChenZ} to discretize the hyperbolic and parabolic terms, respectively, in the problem \eqref{eq:GeneralEquation}.
There are many options for advancing the ODE system resulting from the DG discretization in time. Among these, widely used methods include explicit strong stability preserving Runge-Kutta (SSP-RK) and multi-step methods \cite{gottlieb2009high, gottlieb2011strong, gottlieb2001strong, shu1988total}, Lax-Wendroff methods \cite{qiu2005discontinuous, xu2022third}, and implicit \cite{qin2018implicit, xu2023conservation, xu2022third} or implicit-explicit (IMEX) methods \cite{Kennedy, bürger2013regularized, boscarino2015linearly}.
Explicit time marching methods are simple and efficient for advancing the solution at each time step. However, when applied to parabolic equations, they suffer from a restrictive CFL condition of $\tau\sim O(h^2)$.
Implicit treatment of the parabolic terms, on the other hand, allows for much larger time steps, $\tau\sim O(h)$, at the cost of higher computational effort per step.
Moreover, when the parabolic terms are nonlinear, iterative techniques such as Newton's method are typically required to ensure convergence of the implicit time marching scheme.
Designing efficient preconditioning methods \cite{dobrev2006two, kraus2008multilevel} to accelerate the convergence of these computations is an important topic for implicit methods.
The ETD-RK method belongs to the family of exponential integrators \cite{Oster}, which absorb and integrate the linear stiff part of an ODE system exactly using an integrating factor.
It also incorporates the idea of Runge-Kutta methods to perform time integration of the remaining terms, resulting in an explicit, single-step, high-order time marching scheme.
The ETD-RK schemes offer advantages such as relatively small numerical errors, large stability regions, and good steady-state preservation properties \cite{cox2002exponential, Beylk, kassam2005fourth}.
When the stiff component in an ODE system is nonlinear, exponential Rosenbrock-type methods \cite{hochbruck2009exponential} or exponential propagation iterative methods \cite{tokman2006efficient} are often adopted.
These methods extract and absorb the stiffness of the nonlinear term through the Jacobian matrix and integrate the resulting nonlinear residual explicitly.

In our previous work \cite{xu2025high}, a novel and effective semilinearization approach was developed to efficiently incorporate ETD-RK methods into the high-order WENO finite difference discretizations to solve nonlinear degenerate parabolic equations \eqref{eq:GeneralEquation}.
The numerical results show a significant advantage in the large CFL condition and efficiency compared to the explicit and implicit SSP-RK time-marching methods.
However, WENO finite difference methods have restrictions such as difficulty in dealing with complex domain geometries, relatively wide stencils, and relatively complicated vectorization procedure in their implementation. These motivate us to develop the ETD-RKDG methods on unstructured meshes for \eqref{eq:GeneralEquation} in this paper.
There are two primary formulations for implementing DG methods - 
 the modal and nodal approaches \cite{hesthaven2007nodal}.
The modal DG formulation typically employs orthogonal polynomial bases on each element and assumes exact integration or the use of sufficiently accurate quadrature rules to evaluate the weak form of the DG method.
In contrast, the nodal formulation uses Lagrange basis functions defined at interpolation points and replaces integrands in the DG formulation with their Lagrange interpolants. This leads to a more vectorized implementation, sparser stiffness matrices, and certain structure-preserving properties \cite{chen2017entropy, xu2024WBDG}.
To facilitate the computation of Jacobian matrices and other matrix-intensive operations in our algorithm, we adopt the nodal formulation.
To control spurious oscillations at each Runge-Kutta time step, we incorporate the total variation bounded (TVB) limiter from \cite{DG5}.
Alternative stabilization techniques, such as the oscillation-free method \cite{lu2021oscillation} or the oscillation-eliminating method \cite{peng2025oedg}, can also serve this purpose.
In addition to numerical experiments, we conduct a theoretical analysis of the stability of the ETD-RK methods for parabolic equations.
A Fourier analysis of a one-dimensional linear diffusion equation shows that the time-marching scheme is unconditionally stable when the linear stiff term absorbed by the exponential integrator exceeds a certain threshold.
Although the linear analysis does not fully capture the behavior of the proposed nonlinear schemes for the nonlinear degenerate parabolic equations, it provides an intuitive justification for the strong robustness of ETD-RK methods within the DG framework.
It is also worth mentioning that our previous study \cite{xu2025stability} demonstrates excellent stability of the ETD-RKDG methods for linear convection–diffusion equations when the linear parabolic term is integrated exactly, even with the convection term integrated approximately and explicitly in the exponential integrator.

The rest of the paper is organized as follows.
In Section \ref{Sect:algorithm}, we introduce the nodal DG spatial discretization for both the hyperbolic and parabolic terms, along with ETD-RK time-marching methods of various orders.
Section \ref{Sect:analysis} presents a linear stability analysis for the one-dimensional case using the Fourier method, serving as a heuristic justification for the enhanced stability provided by the Rosenbrock-type treatment.
In Section \ref{Sect:tests}, we conduct extensive numerical tests in two and three spatial dimensions in complex domains to demonstrate the efficiency and stability of our algorithm, and to compare it with other existing methods.
We conclude in Section \ref{Sect:summary} with a summary of the key techniques and achievements of our algorithm.
Finally, Appendices \ref{appd:LagrangeNodes} and \ref{appd:LocalMatrices} provide detailed computations of the nodal basis functions and local matrices that constitute the matrix equations in the nodal DG methods, to facilitate implementation.

\section{Numerical Algorithm}\label{Sect:algorithm}
In this section, we present the numerical algorithm of the ETD-RKDG method to solve degenerate parabolic equations.
We begin by describing the DG discretization of both the diffusion and convection terms using the nodal formulation.
Next, we introduce the ETD-RK time integration scheme for the resulting semi-discrete ODE system.
The computation of the Jacobian matrix in the nodal DG framework is discussed in detail, as it facilitates the Rosenbrock-type treatment.
We conclude with remarks on the efficient implementation of the ETD-RK methods and their conservation properties.

\subsection{DG spatial discretization}\label{Sect:DGscheme}
We describe the DG discretization for degenerate parabolic equations.
To fix ideas, we discuss triangular grids in two spatial dimensions throughout this subsection. However, the algorithm can be trivially extended to three-dimensional tetrahedral grids, as investigated in the numerical section.

\subsubsection{Preliminaries}\label{eq:prelim}

Consider the computational domain $\Omega \subset \mathbb{R}^2$ with a regular triangulation $\mathcal{T}$.
For any element $K \in \mathcal{T}$, let $|K|$ denote its area, and let $\mathbf{v}_i^K$, $i = 1, 2, 3$, be the three vertices of $K$ ordered counterclockwise.
We denote by $e_i^K$ the edge opposite to the vertex $\mathbf{v}_i^K$, with corresponding length $|e_i^K|$, unit outward normal vector $\mathbf{n}_i^K=(\mathbf{n}_{ix}^K, \mathbf{n}_{iy}^K)^T$, and neighboring element $K_i$, for $i = 1, 2, 3$; see Figure~\ref{fig:mesh} for an illustration.
Moreover, we define $E(i, K) \in \{1, 2, 3\}$ such that $e_{E(i,K)}^{K_i} = e_i^K$ for $i = 1, 2, 3$.

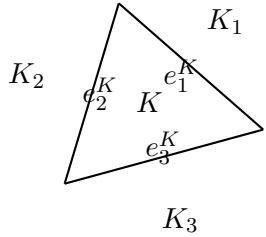
\begin{figure}[!htbp]
 \centering
  \begin{tikzpicture}[scale=1.2]

    \draw[thick] (-0.8,-0.8) -- (1.4,-0.2);
    \draw[thick] (1.4,-0.2) -- (-0.2,1.2);
    \draw[thick] (-0.2,1.2) -- (-0.8,-0.8);
    \coordinate [label=left:$K$] (d) at (0.4,0.1);
    \coordinate [label=left:$K_1$] (d) at (1.3,1);
    \coordinate [label=left:$K_2$] (d) at (-0.9,0.4);
    \coordinate [label=left:$K_3$] (d) at (0.8,-1.2);

    \coordinate [label=left:$e^K_1$] (d) at (0.8,0.4);
    \coordinate [label=left:$e^K_2$] (d) at (-0.1,0.2);
    \coordinate [label=left:$e^K_3$] (d) at (0.6,-0.4);

    \draw[thick][thick, path fading=fade right]  (1.4,-0.2) -- (1.7,1.7);
    \draw[thick][thick, path fading=fade right]  (-0.2,1.2) -- (1.7,1.7);
    \draw[thick][thick, path fading=fade left]  (-0.8,-0.8) -- (-2.3,0.7);
    \draw[thick][thick, path fading=fade left]  (-0.2,1.2) -- (-2.3,0.7);
    \draw[thick][thick, path fading=fade up]  (-0.8,-0.8) -- (0.9,-2.3);
    \draw[thick][thick, path fading=fade up]  (1.4,-0.2) -- (0.9,-2.3);
    \draw[thick][thick, path fading=fade up] (-0.8,-0.8) -- (-1.3,-2.4);
    \draw[thick][thick, path fading=fade up] (-0.8,-0.8) -- (-2.8,-1.0);
    \draw[thick][thick, path fading=fade up] (1.4,-0.2) -- (3.0,-1.7);
    \draw[thick][thick, path fading=fade right] (1.4,-0.2) -- (3.2,0.4);
    \draw[thick][thick, path fading=fade down] (-0.2,1.2) -- (0.35,2.85);
    \draw[thick][thick, path fading=fade down] (-0.2,1.2) -- (-1.7,2.7);
  \end{tikzpicture}  
  \caption{Triangulation of a local region in the computational domain $\Omega$.}\label{fig:mesh}
\end{figure}

The DG finite element space is defined as 
\begin{equation}
V^k_{h}=\{v\in L^2(\Omega): v|_{K}\in \mathcal{P}^{k}(K),\, \forall K\in\mathcal{T}\},
\end{equation}
where $\mathcal{P}^{k}(K)$ denotes the space of polynomials of degree at most $k$ on the element $K$.

There are two primary choices for the basis of the finite element space $V_h^k$:
\begin{equation}\label{eq:FEMbasis}
\begin{split}
V_{h}^{k}&=\bigoplus_{K\in\mathcal{T}} \text{span}\{{\phi}_i^{K}({\mathbf{x}}), \, i=1,2,\ldots, N_k\}\\
&=\bigoplus_{K\in\mathcal{T}} \text{span}\{{\ell}_i^K({\mathbf{x}}),\, i=1,2,\ldots, N_k\},
\end{split}
\end{equation}
where $N_k=\frac12{(k+1)(k+2)}$ is the number of degrees of freedom (DoFs) in $\mathcal{P}^{k}(K)$, and ${\phi}_i^{K}$ and ${\ell}_i^K$ denote the orthogonal (modal) and Lagrange (nodal) basis functions in $\mathcal{P}^{k}(K)$, respectively.
Moreover, we let $N=\text{dim} V_{h}^{k}$ denote the total number of degrees of freedom in $V_{h}^{k}$. 
On the reference element
\begin{equation}
\widehat{K}=\{\widehat{\mathbf{x}}:=(\widehat{x}_1,\widehat{x}_2)^T: \widehat{x}_{1}\geq0, \widehat{x}_2\geq 0, \widehat{x}_1+\widehat{x}_2\leq 1\},
\end{equation}
we construct the orthonormal basis $\widehat{\phi}_{i}$ using the Gram-Schmidt process applied to the polynomials $\{\widehat{x}_1^i\widehat{x}_2^{j}: i,j\geq0, i+j\leq k\}$, and define the Lagrange basis $\widehat{\ell}_i$ as the interpolants at the Lagrange nodes $\{\widehat{\mathbf{x}}_i\}_{i=1}^{N_k}$, such that
\begin{equation}
\int_{\widehat{K}}\widehat{\phi}_{i}\widehat{\phi}_j\,d\widehat{\mathbf{x}}=\delta_{j}^{i},\quad \text{and}\quad \widehat{\ell}_{i}(\widehat{\mathbf{x}}_{j})=\delta_{j}^{i}, \quad \text{for}\,\, i,j=1,\ldots, N_{k}.
\end{equation}
On the physical elements $K\in\mathcal{T}$, we define $\phi_{i}^{K}=\widehat{\phi}_{i} \circ \mathbb{T}^K$ and $\ell_{i}^{K} = \widehat{\ell}_{i} \circ \mathbb{T}^K$, where $\mathbb{T}^K$ is the affine mapping from $K$ to $\widehat{K}$.
In \cite{hesthaven2007nodal}, a general formula for the modal basis $\widehat{\phi}_{i}$ based on Jacobi polynomials is provided for arbitrary polynomial degree.
The authors also propose a set of Lagrange nodes $\{\widehat{\mathbf{x}}_i\}_{i=1}^{N_{k}}$ that minimizes the Lagrange constant; see Figure~\ref{fig:Lagrange_nodes} for the distribution of the nodes for different polynomial orders.
The exact coordinates of the nodes are provided in the Appendix \ref{appd:LagrangeNodes}.
They coincide with the $(k+1)$-point Legendre-Gauss-Lobatto (LGL) nodes on each edge.
\begin{figure}[!htbp]
 \centering
 \begin{subfigure}[b]{0.22\textwidth}
  \includegraphics[width=\textwidth]{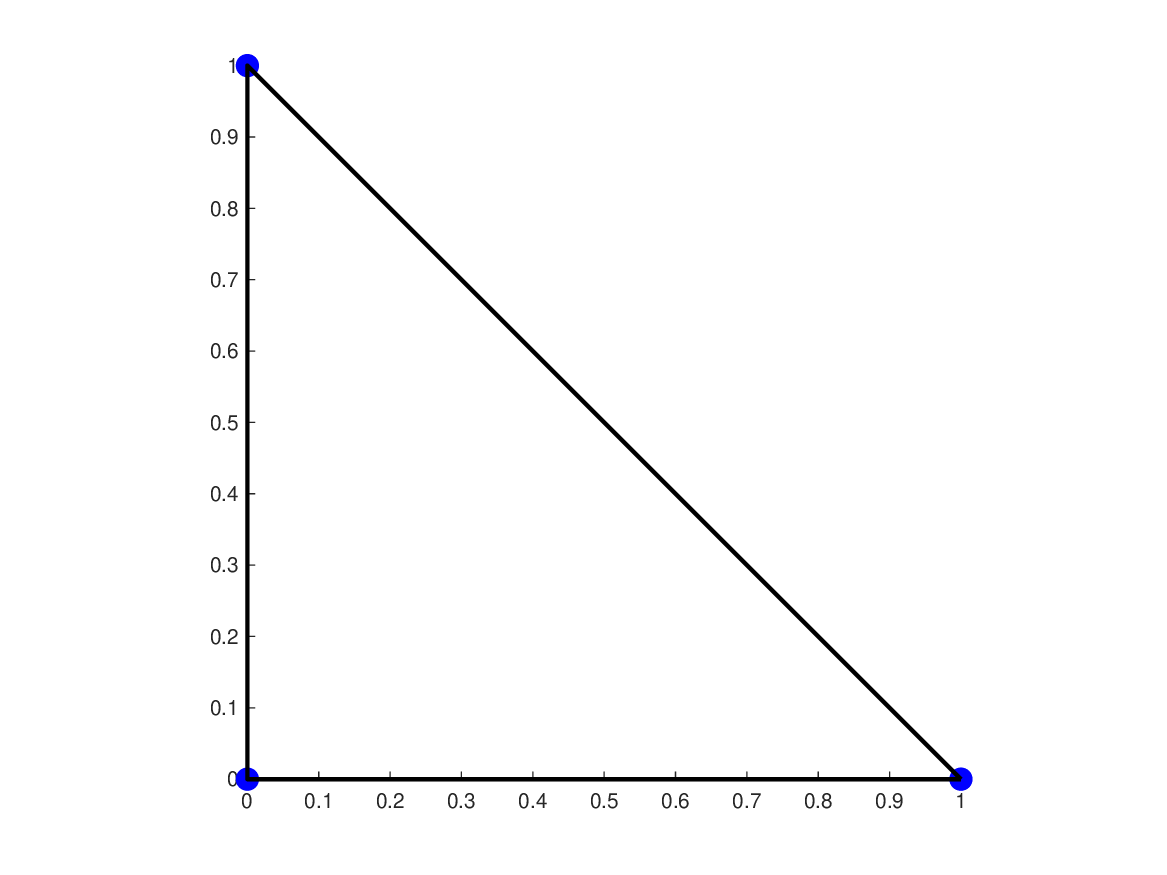}
  \caption{$\mathcal{P}^1(\widehat{K})$}
 \end{subfigure}
  \begin{subfigure}[b]{0.22\textwidth}
  \includegraphics[width=\textwidth]{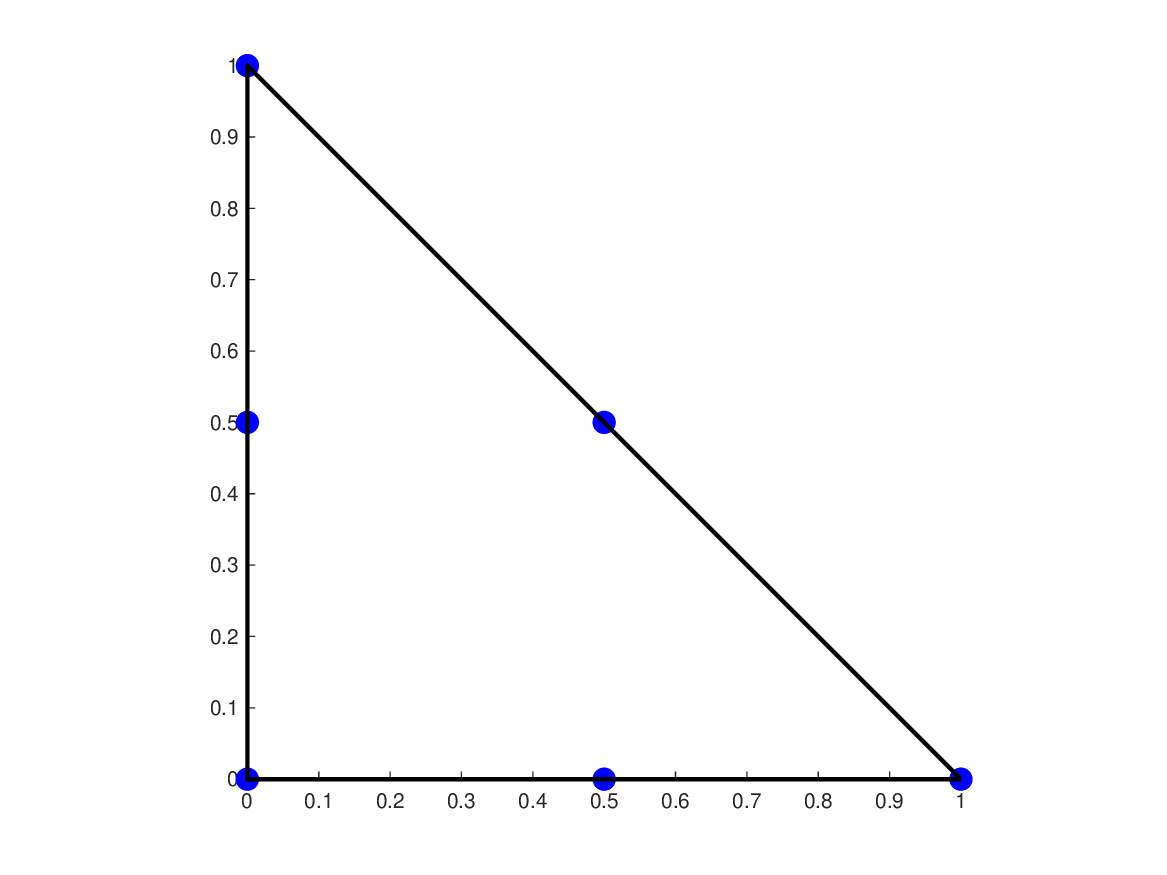}
  \caption{$\mathcal{P}^2(\widehat{K})$}
 \end{subfigure}
 \begin{subfigure}[b]{0.22\textwidth}
  \includegraphics[width=\textwidth]{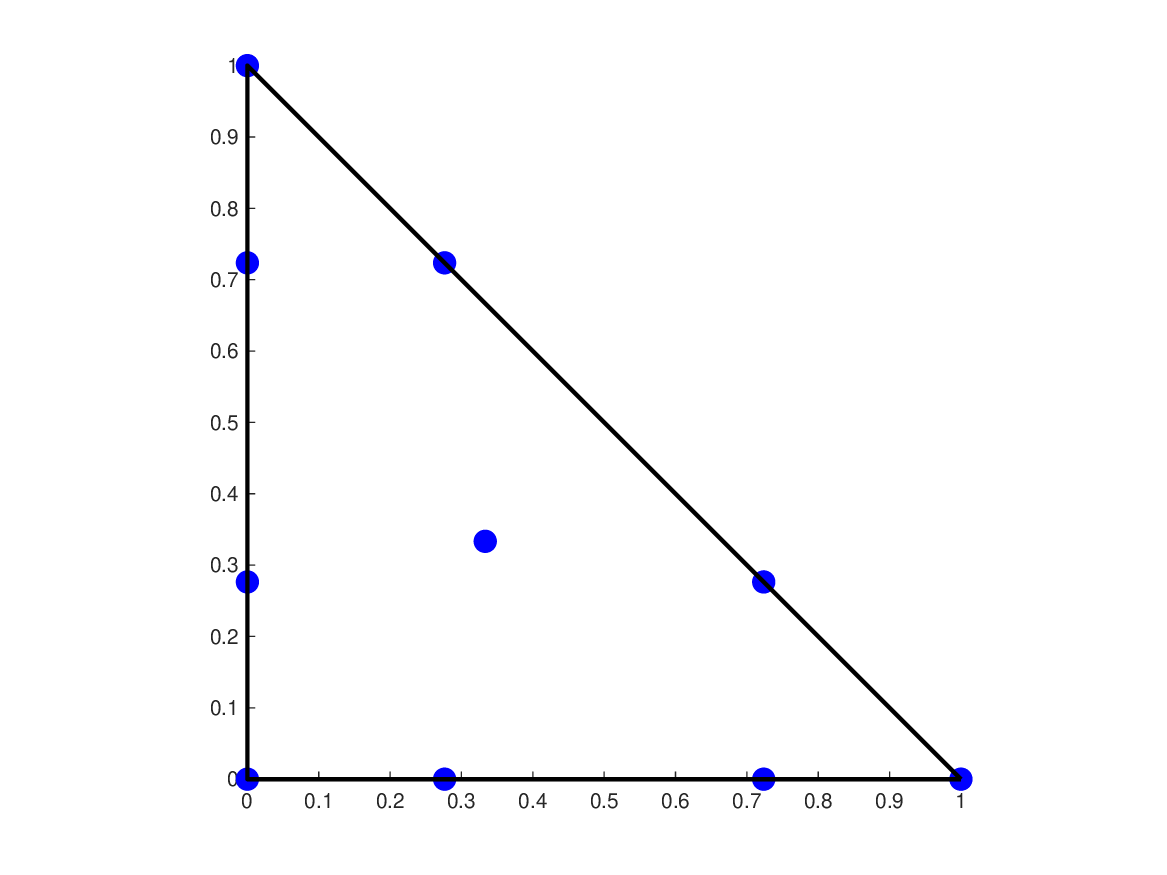}
  \caption{$\mathcal{P}^3(\widehat{K})$}
 \end{subfigure}
 \begin{subfigure}[b]{0.22\textwidth}
  \includegraphics[width=\textwidth]{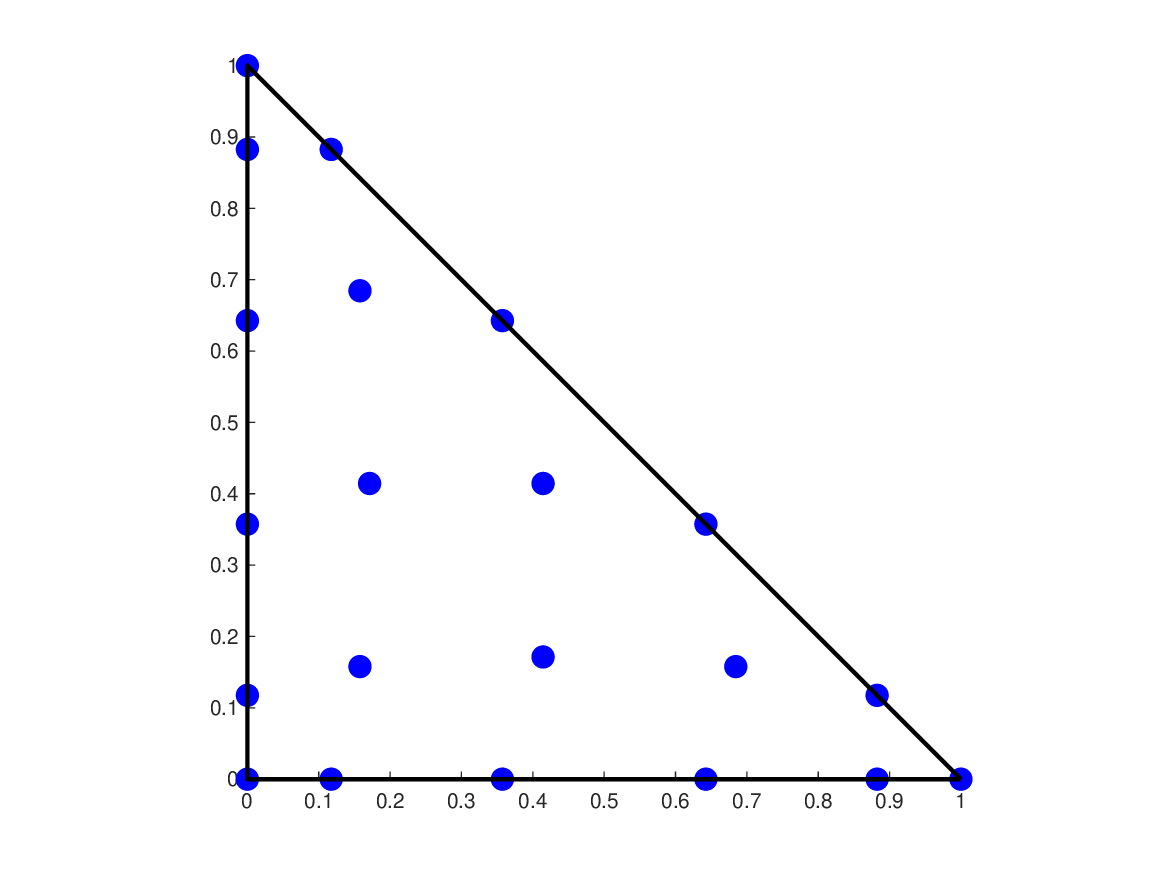}
  \caption{$\mathcal{P}^5(\widehat{K})$}
 \end{subfigure}
 \caption{Distribution of the two-dimensional Lagrange nodes (blue dots) on $\widehat{K}$. 
 The Lagrange nodes coincide with the $(k+1)$-point Legende-Gauss-Lobatto (LGL) nodes on each edge.}
 \label{fig:Lagrange_nodes}
\end{figure}

Finally, we introduce the inner products for $f,g\in C^\infty(K)$ and $\bar{f}\in C^\infty(K'), \bar{g}\in C^\infty(K'')$, which will be used in the DG formulation:
\begin{equation}
\begin{split}
\big(f,g\big)_{K}&=\int_{K}f(\mathbf{x})g(\mathbf{x})\,d\mathbf{x},\\
\big<f,g\big>_{K}&=\int_{K}\Bigg(\sum_{i=1}^{N_{k}}f(\mathbf{x}_{i}^{K})\ell_{i}^{K}(\mathbf{x})\Bigg) \Bigg(\sum_{j=1}^{N_{k}}g(\mathbf{x}_{j}^{K})\ell_{j}^{K}(\mathbf{x})\Bigg)d\mathbf{x},\\
\big[\mathcal{L}_1{\bar{f}},\mathcal{L}_2{\bar{g}} \big]_{e}&=\int_{e} \Bigg(\sum_{i=1}^{N_{k}}\bar{f}(\mathbf{x}_{i}^{K'})\mathcal{L}_1\ell_{i}^{K'}\Bigg) \Bigg(\sum_{j=1}^{N_{k}}\bar{g}(\mathbf{x}_{j}^{K''})\mathcal{L}_2\ell_{j}^{K''}\Bigg)ds
\end{split}
\end{equation}
where $\mathbf{x}_i^K=(\mathbb{T}^K)^{-1}(\widehat{\mathbf{x}}_i)$, for $i=1,\ldots, N_{k}$, denote the Lagrange nodes on $K$, $e$ is a common edge of $K'$ and $K''$, and $\mathcal{L}_{1}$ and $\mathcal{L}_2$ are linear operators on $C^\infty(K')$ and $C^\infty(K'')$, respectively, such as the identity operator or the differentiation operator.
Extending the definitions above, we define $\big<\mathbf{f},\mathbf{g}\big>_{K}=\sum_{i=1}^{2}\big<f_{i}, g_{i}\big>_{K}$ for vector-valued functions $\mathbf{f},\mathbf{g}\in [C^\infty(K)]^2$, and $\big[\mathcal{L}_1\bar{f}, \mathcal{L}_2\bar{g} \big]_{\partial K}=\sum_{i=1}^{3}\big[\mathcal{L}_1 \bar{f}, \mathcal{L}_2 \bar{g}\big]_{e^i}$ for $\mathcal{L}_1, \mathcal{L}_2, \bar{f},\bar{g}$ defined on $K$ and its neighboring elements.

\subsubsection{DG discretization for diffusion equations}
We consider the two-dimensional diffusion equation
\begin{equation}
u_t=\Delta g(u),
\end{equation}
in the computational domain $\Omega$, subject to mixed boundary conditions:
\begin{equation}
u=u_D ~ \text{on}~\Gamma_D,\quad \frac{\partial g(u)}{\partial\mathbf{n}}=g_N ~\text{on}~\Gamma_N, 
\end{equation}
where $\Gamma_D$ and $\Gamma_N=\partial\Omega\setminus\Gamma_D$ are the Dirichlet and Neumann boundaries, respectively, and $\mathbf{n}$ denotes the unit outer normal vector on the boundary.

We adopt the semi-discrete DG scheme \cite{ChenZ} to find $u(t)\in V_{h}^{k}$ such that, for all $v\in\mathcal{P}^k(K)$,
\begin{equation}\label{eq:DG_Diff}
\big<\frac{\partial u}{\partial t},v\big>_{K}=\big< g(u),\Delta v\big>_{K}-\big[\widehat{g}, \frac{\partial v}{\partial \mathbf{n}^K}\big]_{\partial K}+\big[\widehat{\nabla g}, v\big]_{\partial K},\quad K\in\mathcal{T},
\end{equation}
where $\mathbf{n}^K$ is the unit outer normal on $\partial K$, and $\widehat{g}$ and $\widehat{\nabla g}$ are the numerical fluxes defined as follows.
On interior edges:
\begin{equation}\label{eq:DiffusionBdr1}
\begin{cases}
~\widehat{g}=\frac12(g^{\text{int}}+g^{\text{ext}}),\\
~\widehat{\nabla g}=\frac{1}{2}((\nabla g)^{\text{int}}+(\nabla g)^{\text{ext}})\cdot\mathbf{n}^K+\beta (u^{\text{ext}}-u^{\text{int}}). 
\end{cases}
\end{equation}
On the Dirichlet boundary $\Gamma_D$:
\begin{equation}\label{eq:DiffusionBdr2}
\begin{cases}
~\widehat{g}=g(u_D),\\
~\widehat{\nabla g}=(\nabla g)^{\text{int}}\cdot\mathbf{n}+\beta(u_D-u^{\text{int}}).
\end{cases}
\end{equation}
On the Neumann boundary $\Gamma_N$:
\begin{equation}\label{eq:DiffusionBdr3}
\begin{cases}
~\widehat{g}=g^{\text{int}},\\
~\widehat{\nabla g}=g_N.
\end{cases}
\end{equation}
Here, the superscripts "int" and "ext" refer to values taken from the interior and exterior of the element K, respectively. The penalty parameter $\beta=O(\frac{1}{h})$ is included to ensure stability.
Note that for periodic boundaries, no boundary condition is imposed, and the fluxes are always computed using \eqref{eq:DiffusionBdr1}.

\subsubsection{DG discretization for convection equations}
We consider the two-dimensional convection equation
\begin{equation}
u_t+\nabla\cdot F(u)=0,
\end{equation}
where $F(u)=(f_1(u), f_2(u))^T$, in the computational domain $\Omega$, subject to the boundary condition 
\begin{equation}
u=u_{\text{in}}~\text{on}~\Gamma_{\text{in}},
\end{equation}
where the inflow boundary is defined as $\Gamma_{\text{in}}=\{(x,y)^T \in\partial\Omega:F'(u_{\text{in}})\cdot\mathbf{n}<0\}$.

We adopt the semi-discrete DG scheme \cite{DG2} to find $u(t)\in V_{h}^{k}$ such that, for all $v\in\mathcal{P}^{k}(K)$,
\begin{equation}\label{eq:DG_Conv}
\big<\frac{\partial u}{\partial t}, v\big>_{K} = \big<F(u), \nabla v\big>_{K} - \big[\hat{F}, v \big]_{\partial K},\quad K\in\mathcal{T},
\end{equation}
where $\hat{F}$ is the Lax-Friedrichs numerical flux, defined as
\begin{equation}\label{eq:LFflux}
\hat{F}=\frac12(f_1^{\text{int}}\mathbf{n}^{K}_{x}+f_2^{\text{int}}\mathbf{n}^{K}_{y}+f_1^{\text{ext}}\mathbf{n}^{K}_{x}+f_2^{\text{ext}}\mathbf{n}^{K}_{y})-\frac{\alpha}{2} (u^{\text{ext}}-u^{\text{int}}),
\end{equation}
with the viscosity parameter $\alpha=\max_{K\in\mathcal{T},\forall u}|f'_1(u)\mathbf{n}^{K}_x+f'_2(u)\mathbf{n}^{K}_y|$.

On the inflow boundary $\Gamma_{\text{in}}$, we set
\begin{equation}
f_1^{\text{ext}}=f_1(u_{\text{in}}), \quad f_{2}^{\text{ext}}=f_2(u_{\text{in}}),\quad u^{\text{ext}}=u_{\text{in}},
\end{equation}
and on the outflow boundary $\Gamma_{\text{out}}=\partial\Omega\setminus\Gamma_{\text{in}}$, we take 
\begin{equation}
f_1^{\text{ext}}=f_1^{\text{int}},\quad  f_{2}^{\text{ext}}=f_2^{\text{int}},\quad u^{\text{ext}}=u^{\text{int}}.
\end{equation}
Note that for periodic boundaries, no boundary condition is imposed, and the fluxes are always computed using \eqref{eq:LFflux}.

\subsection{ETD-RK time integration}

\subsubsection{Methodology and ETD-RK schemes}
We first consider the following ODE system:
\begin{equation}
\mathbf{u}_t=L\mathbf{u}+N(\mathbf{u}),
\end{equation}
where $\mathbf{u}(t) \in \mathbb{R}^{N}$ is a vector of degrees of freedom, $L \in \mathbb{R}^{N \times N}$ is a constant matrix, and $N(\mathbf{u}) \in \mathbb{R}^{N}$ is a nonlinear vector-valued function.
We assume that $L\mathbf{u}$ and $N(\mathbf{u})$ represent the stiff and non-stiff components of the ODE system, respectively.

In the context of semi-discrete DG formulations, $L$ typically arises from the discretization of the linear diffusion term, while $N(\mathbf{u})$ corresponds to the convection term.
It is well known that explicitly treating $L\mathbf{u}$ typically results in a severe time step restriction, $\tau\sim O(h^2)$.
Instead, multiplying both sides by the exponential integrating factor $e^{-Lt}$, the stiff component $L\mathbf{u}$ can be absorbed:
\begin{equation}\label{eq:ETD_ODEs1}
\frac{d}{dt}\left(e^{-Lt}\mathbf{u}\right)=e^{-Lt} N(\mathbf{u}),
\end{equation}
and consequently, the stiffness from the linear term is fully resolved in the exponential time integration:
\begin{equation}\label{eq:ETD_ODEs2}
\mathbf{u}(t^{n+1})=e^{\tau L}\mathbf{u}(t^{n})+\int_{0}^{\tau}e^{(\tau-s)L} N(\mathbf{u}(t^n+s))\,ds,
\end{equation}
which is obtained by integrating \eqref{eq:ETD_ODEs1} over the interval $[t^n, t^{n+1}]$, where $\tau:=t^{n+1}-t^{n}$.

The ETD-RK methods are motivated by equation \eqref{eq:ETD_ODEs2}, where the nonlinear integral is approximated using multi-stage values within a single time step.
Below, we present the ETD-RK schemes from first to fourth order \cite{cox2002exponential}.
\begin{itemize}
    \item ETD-RK1
    \begin{equation}\label{eq:ETDRK1}
    \begin{split}
    \mathbf{u}^{n+1}=&e^{\tau L}\mathbf{u}^{n} + L^{-1}\left(e^{\tau L}-I\right)N(\mathbf{u}^n),
    \end{split}
    \end{equation}
   \item ETD-RK2
   \begin{equation}\label{eq:ETDRK2}
   \begin{split}
       \mathbf{a}^n=&\mathbf{u}^n+\tau \varphi_1 (\tau L) (L\mathbf{u}^{n}+N(\mathbf{u}^n)),\\
       \mathbf{u}^{n+1}=&\mathbf{a}^n+\tau\varphi_{2}(\tau L)(-N(\mathbf{u}^n)+N(\mathbf{a}^n)),
   \end{split}
   \end{equation}
    \item ETD-RK3
    \begin{equation}\label{eq:ETDRK3}
    \begin{split}
        \mathbf{a}^n=&\mathbf{u}^n+\frac{\tau}{2}\varphi_1(\frac{\tau}{2}L)(L\mathbf{u}^n+N(\mathbf{u}^n)),\\
        \mathbf{b}^n=&\mathbf{u}^n+\tau\varphi_{1}(\tau L)(L\mathbf{u}^n-N(\mathbf{u}^n)+2N(\mathbf{a}^n)),\\
        \mathbf{u}^{n+1}=&\mathbf{u}^n+\tau\varphi_{1}(\tau L)(L\mathbf{u}^n+N(\mathbf{u}^n))\\
        &+\tau\varphi_{2}(\tau L)(-3N(\mathbf{u}^n)+4N(\mathbf{a}^n)-N(\mathbf{b}^n))\\
        &+\tau\varphi_{3}(\tau L)(4N(\mathbf{u}^n)-8N(\mathbf{a}^n)+4N(\mathbf{b}^n)),
    \end{split}
    \end{equation}
    \item ETD-RK4
    \begin{equation}\label{eq:ETDRK4}
    \begin{split}
        \mathbf{a}^n=&\mathbf{u}^n+\frac{\tau}{2}\varphi_{1}(\frac{\tau}{2}L)(L\mathbf{u}^n+N(\mathbf{u}^n)),\\
        \mathbf{b}^n=&\mathbf{u}^{n}+\frac{\tau}{2}\varphi_{1}(\frac{\tau}{2} L)(L\mathbf{u}^n+N(\mathbf{a}^n)),\\
        \mathbf{c}^n=&\mathbf{a}^n+\frac{\tau}{2}\varphi_{1}(\frac{\tau}{2} L)(L\mathbf{a}^n-N(\mathbf{u}^n)+2N(\mathbf{b}^n)),\\
        \mathbf{u}^{n+1}=&\mathbf{u}^n+\tau\varphi_{1}(\tau L)(L\mathbf{u}^n+N(\mathbf{u}^n))\\
        &+\tau\varphi_{2}(\tau L)(-3N(\mathbf{u}^{n})+2N(\mathbf{a}^n)+2N(\mathbf{b}^n)-N(\mathbf{c}^n))\\
        &+\tau\varphi_{3}(\tau L)(4N(\mathbf{u}^n)-4N(\mathbf{a}^n)-4N(\mathbf{b}^n)+4N(\mathbf{c}^n)),
    \end{split}
    \end{equation}
\end{itemize}
where these $\varphi$-functions are defined as \cite{Oster}:
\begin{equation*}
\varphi_1(z)=\frac{e^z-1}{z},\quad \varphi_2(z)=\frac{e^z-1-z}{z^2},\quad \varphi_3(z)=\frac{e^{z}-1-z-\frac12 z^2}{z^3},
\end{equation*}
and $\mathbf{u}^n$ is the numerical approximation to $\mathbf{u}(t^n)$.

It is clear that the ETD-RK1 scheme \eqref{eq:ETDRK1} is obtained by applying a first-order approximation $N(\mathbf{u}(t^n+s))\approx N(\mathbf{u}^n)$ in the integrand. 
When $N(\mathbf{u})\equiv 0$, all ETD-RK schemes reduce to the exact solution formula $\mathbf{u}^{n+1}=e^{\tau L}\mathbf{u}^{n}$.

When the right-hand side of the ODE system is fully nonlinear, a common approach (Rosenbrock-type treatment) \cite{hochbruck2009exponential, tokman2006efficient} is to extract a linear stiff component using the Jacobian matrix, while treating the remaining nonlinear part as non-stiff.
In particular, for the degenerate parabolic problem \eqref{eq:GeneralEquation}, the corresponding ODE system can be written as
\begin{equation}\label{eq:ODEs}
\mathbf{u}_{t}=D(\mathbf{u})+A(\mathbf{u}),
\end{equation}
where $D(\mathbf{u})$ and $A(\mathbf{u})$ arise from the DG discretizations \eqref{eq:DG_Diff} and \eqref{eq:DG_Conv} of the diffusion and convection terms, respectively.
Accordingly, we define $L=D'(\mathbf{u}^{n})$ as the Jacobian matrix of the diffusion term $D(\mathbf{u})$ evaluated at time $t^n$, and let $N(\mathbf{u})=D(\mathbf{u})+A(\mathbf{u})-D'(\mathbf{u}^{n})\mathbf{u}$ be the residual nonlinear term.
We then apply the ETD-RK schemes \eqref{eq:ETDRK1}--\eqref{eq:ETDRK4} to advance the solution in time.
At the end of each time step, the total variation bounded (TVB) limiter \cite{DG5} is applied to suppress spurious oscillations near sharp wave fronts in the solution profile.

\begin{rem}
Since the diffusion term is the source of stiffness, this treatment is expected to alleviate the severe time step restriction \( \tau \sim O(h^2) \) imposed by fully explicit methods.  
Indeed, in our previous study \cite{xu2025stability}, we showed that when the diffusion term is fully absorbed into the exponential integrator, the ETD-RKDG schemes for linear convection-diffusion equations exhibit enhanced stability compared to the pure advection case.  
The resulting scheme is *unconditionally stable* in a weak sense: \( \tau \leq C \), where \( C \) is a constant that depends only on the order of the ETD-RK time integrator and the coefficients of the PDE, and is independent of the spatial discretization.  

In the problems considered in this work, the diffusion term is nonlinear; consequently, only its Jacobian component is absorbed into the exponential integrator, and the previous conclusion may not fully apply.
To address this, we conduct a linear stability analysis for diffusion equations, demonstrating that the ETD-RK scheme remains unconditionally stable as long as the portion of the diffusion term absorbed by the exponential integrator constitutes a sufficiently large part of the total diffusion operator.  

Together, these results provide a heuristic justification for the enhanced stability from a theoretical perspective. Numerical evidence is presented in the numerical section.
\end{rem}

\begin{rem}

Fast computation of exponential and $\varphi$-functions of $L$ applied to vectors $\mathbf{b}$ (e.g., $e^{L}\mathbf{b}$, $\varphi_{i}(L)\mathbf{b}$, or their linear combinations) is a critical component for the efficiency of ETD-RK schemes.
Due to the large size but highly sparse structure of $L$, the Krylov subspace method \cite{TrefBau} is a standard approach.
The basic idea is to project the large-dimensional matrix $L$ and vector $\mathbf{b}$ onto a Krylov subspace of much smaller dimension $m\ll N$, and then compute \cite{moler2003nineteen} the matrix exponential and $\varphi$-functions in this reduced space.
Other techniques that accelerate the computation include time-stepping procedures with adaptive subspace dimensions and time step-sizes \cite{niesen2012algorithm}, and the incomplete orthogonalization method \cite{gaudreault2016efficient, luan2019further}.
In this work, we use the MATLAB package \emph{phipm}, which is based on the algorithm proposed in \cite{niesen2012algorithm}. For further details on fast computation, we refer the reader to Section 2.2.3 of our recent work \cite{xu2025high} and references therein.
\end{rem}


\subsubsection{Vectorized DG formulation and its Jacobian matrix}

In Section \ref{Sect:DGscheme}, we presented the variational formulation of the DG methods. 
To facilitate the fast computation and implementation of the ETD-RKDG methods, we introduce the vectorized DG formulation and its associated Jacobian matrix in this subsection. 
The periodic boundary condition is assumed throughout this section to simplify the discussion, but other boundary conditions can be incorporated into the scheme without essential difficulty.

We adopt the Lagrange (nodal) basis $\{\ell_{i}^{K}(\mathbf{x})\}_{i=1}^{N_k},\, K\in\mathcal{T}$ for the finite element space $V_{h}^{k}$, and represent the solution $u\in V_{h}^{k}$ as
\begin{equation}
u(\mathbf{x}, t)=\sum_{K\in\mathcal{T}}\sum_{i=1}^{N_k}u(\mathbf{x}_{i}^{K}, t)\ell_{i}^{K}(\mathbf{x}),
\end{equation}
where the nodal values $u(\mathbf{x}_{i}^{K}, t)$ at the Lagrange nodes $\{\mathbf{x}_{i}^{K}\}_{i=1}^{N_k},\,K\in\mathcal{T}$, serve as the DoFs of the numerical solution.
We denote the vector of local DoFs on an element $K$ by $\mathbf{u}^K=\big(u_{1}^{K},\ldots, u_{N_k}^{K}\big)^T\in\mathbb{R}^{N_k}$, and the global DoF vector by $\mathbf{u}=\big(\cdots,(\mathbf{u}^{K})^T,\cdots\big)^T\in\mathbb{R}^{N}$, where the concatenation is taken over all $K\in\mathcal{T}$.
For any scalar function $f$, we let $f(\mathbf{u}^K)$ and $f(\mathbf{u})$ denote the vectors obtained by applying $f$ component-wise to $\mathbf{u}^K$ and $\mathbf{u}$, respectively.

The local vectorized formulation of the DG scheme \eqref{eq:DG_Diff} in an element $K\in\mathcal{T}$ is given by
\begin{equation}\label{eq:DG_Diff_vect}
\begin{split}
\mathcal{M}_{K}\mathbf{u}^K_t=&\left((\mathcal{S}_{K}^{(2,0)})^T+(\mathcal{S}_{K}^{(0,2)})^T\right)g(\mathbf{u}^K)\\
&+\sum_{m=1}^{3}\left(
\frac{\mathbf{n}_{mx}^{K}}{2}\mathcal{B}_{m}^{K(1,0)}+\frac{\mathbf{n}_{my}^{K}}{2}\mathcal{B}_{m}^{K(0,1)}-\frac{\mathbf{n}_{mx}^{K}}{2}(\mathcal{B}_{m}^{K(1,0)})^T-\frac{\mathbf{n}_{my}^{K}}{2}(\mathcal{B}_{m}^{K(0,1)})^T
\right)g(\mathbf{u}^K)\\
&+\sum_{m=1}^{3}\left(
\frac{\mathbf{n}_{mx}^{K}}{2}\mathcal{B}_{m,E(m,K)}^{K,K_m(1,0)}
+\frac{\mathbf{n}_{my}^{K}}{2}\mathcal{B}_{m,E(m,K)}^{K,K_m(0,1)}
-\frac{\mathbf{n}_{mx}^{K}}{2}(\mathcal{B}_{E(m,K),m}^{K_{m},K(1,0)})^T
-\frac{\mathbf{n}_{my}^{K}}{2}(\mathcal{B}_{E(m,K),m}^{K_m,K(0,1)})^T
\right)g(\mathbf{u}^{K_m})\\
&+\beta\sum_{m=1}^{3}\mathcal{B}_{m,E(m,K)}^{K,K_m}\mathbf{u}^{K_m}
-\beta\sum_{m=1}^{3}\mathcal{B}_{m}^{K}\mathbf{u}^{K},
\end{split}
\end{equation}
and that of \eqref{eq:DG_Conv} is given by
\begin{equation}\label{eq:DG_Conv_vect}
\begin{split}
\mathcal{M}_{K}\mathbf{u}_{t}^{K}=&(\mathcal{S}_{K}^{(1,0)})^{T}f_1(\mathbf{u}^{K})+(\mathcal{S}_{K}^{(0,1)})^{T}f_{2}(\mathbf{u}^K)\\
&-\sum_{m=1}^{3}\frac{\mathbf{n}_{mx}^{K}}{2}\mathcal{B}_{m}^{K}f_{1}(\mathbf{u}^K)-\sum_{m=1}^{3}\frac{\mathbf{n}_{my}^{K}}{2}\mathcal{B}_{m}^{K}f_{2}(\mathbf{u}^K)\\
&-\sum_{m=1}^{3}\frac{\mathbf{n}_{mx}^{K}}{2}\mathcal{B}_{m,E(m,K)}^{K,K_m}f_1(\mathbf{u}^{K_m})-\sum_{m=1}^{3}\frac{\mathbf{n}_{my}^{K}}{2}\mathcal{B}_{m,E(m,K)}^{K,K_m}f_2(\mathbf{u}^{K_m})\\
&+\sum_{m=1}^{3}\frac{\alpha}{2}\mathcal{B}_{m,E(m,K)}^{K,K_m}\mathbf{u}^{K_m}-\sum_{m=1}^{3}\frac{\alpha}{2}\mathcal{B}_{m}^{K}\mathbf{u}^K,
\end{split}
\end{equation}
where the local mass and stiffness matrices are defined as
\begin{equation}\label{eq:LocalMatrices1}
\begin{split}
&({\mathcal{M}}_{K})_{ij}=\big<\ell_{i}^{K}, {\ell_{j}^{K}} \big>_{K},\quad ({\mathcal{S}}^{(1,0)}_{K})_{ij}=\big<\ell_{i}^{K}, \frac{\partial{\ell}_{j}^{K}}{\partial{{x}_1}}\big>_{K},\quad ({\mathcal{S}}^{(0,1)}_{K})_{ij}=\big<\ell_{i}^{K}, \frac{\partial{\ell}_{j}^{K}}{\partial{{x}_2}}\big>_{K},\\
&({\mathcal{S}}^{(2,0)}_{K})_{ij}=\big<\ell_{i}^{K}, \frac{\partial^2{\ell}_{j}^{K}}{\partial{{x}_1^2}}\big>_{K},\quad ({\mathcal{S}}^{(0,2)}_{K})_{ij}=\big<\ell_{i}^{K}, \frac{\partial^2{\ell}_{j}^{K}}{\partial{{x}_2^2}}\big>_{K},\quad ({\mathcal{S}}^{(1,1)}_{K})_{ij}=\big<\ell_{i}^{K}, \frac{\partial^2{\ell}_{j}^{K}}{\partial x_{1}\partial{{x}_2}}\big>_{K},
\end{split}
\end{equation}
and the local boundary matrices are defined as
\begin{equation}\label{eq:LocalMatrices2}
\begin{split}
&({\mathcal{B}}_{m}^{K})_{ij}=\big[{\ell}_{i}^K, {\ell}_j^K \big]_{e_{m}^{K}},\quad ({\mathcal{B}}^{K(1,0)}_{m})_{ij}=\big[{\ell}_{i}^K, \frac{\partial {\ell}_j^K}{\partial{x}_1 } \big]_{e_{m}^{K}},\quad ({\mathcal{B}}^{K(0,1)}_{m})_{ij}=\big[{\ell}_{i}^K, \frac{\partial{\ell}_j^K}{\partial{x}_{2} } \big]_{e_{m}^{K}},\\
&({\mathcal{B}}^{K, K'}_{m,n})_{ij}=\big[{\ell}_{i}^{K}, {\ell}^{K'}_j  \big]_{e_{m}^{K}},\quad ({\mathcal{B}}^{K, K'(1,0)}_{m,n})_{ij}=\big[{\ell}_{i}^K, \frac{\partial {\ell}^{K'}_j}{\partial{x}_1 } \big]_{e_{m}^{K}},\quad ({\mathcal{B}}^{K, K'(0,1)}_{m,n})_{ij}=\big[{\ell}_{i}^K, \frac{\partial {\ell}^{K'}_j}{\partial {x}_2} \big]_{e_{m}^{K}}.
\end{split}
\end{equation}

The global vectorized DG formulation for the problem \eqref{eq:GeneralEquation} can be written as
\begin{equation}\label{eq:DG_ConvDiff_Vect}
\mathbf{u}_{t}=G\, g(\mathbf{u})+F_1\,f_{1}(\mathbf{u})+F_2\,f_2(\mathbf{u})+(\beta+\frac{\alpha}{2})P\,\mathbf{u},
\end{equation}
where $G, F_1, F_2, P\in\mathbb{R}^{N\times N}$ are global matrices assembled from the local matrices in \eqref{eq:DG_Diff_vect} and \eqref{eq:DG_Conv_vect}.
To construct these global matrices, we first invert the local mass matrices $\mathcal{M}_{K}$ and apply them to the right-hand sides of the local equations. 
The resulting local terms are then assembled into the global matrices according to the following rules:
\begin{itemize}
  \item Local matrices multiplying \( g(\mathbf{u}^K) \), \( f_1(\mathbf{u}^K) \), \( f_2(\mathbf{u}^K) \), or \( \mathbf{u}^K \) are assembled into the {diagonal blocks} of \( G \), \( F_1 \), \( F_2 \), or \( P \), respectively, with {rows and columns corresponding to the DoFs on element \( K \)}.

  \item Local matrices multiplying \( g(\mathbf{u}^{K_m}) \), \( f_1(\mathbf{u}^{K_m}) \), \( f_2(\mathbf{u}^{K_m}) \), or \( \mathbf{u}^{K_m} \) are assembled into the {off-diagonal blocks}, with {rows corresponding to the DoFs on \( K \)} and {columns corresponding to the DoFs on the neighboring element \( K_m \)}.
\end{itemize}
The global matrices $G, F_1, F_2, P\in\mathbb{R}^{N\times N}$ only need to be computed once at the beginning of the program and should be stored using a sparse matrix data structure for efficient use.
We let
\begin{equation}
D(\mathbf{u})=G\, g(\mathbf{u})+(\beta+\frac{\alpha}{2})P\,\mathbf{u},\quad A(\mathbf{u})=F_1\,f_{1}(\mathbf{u})+F_2\,f_2(\mathbf{u})
\end{equation}
in the ODE formulation \eqref{eq:ODEs}.
The Jacobian matrix $L$ for the stiff component $D(\mathbf{u})$ can be efficiently computed by
\begin{equation}
L=G.*g'(\mathbf{u})^T + (\beta+\frac{\alpha}{2})\,P,
\end{equation}
where \( .* \) (borrowed from MATLAB syntax) denotes the operation of broadcasting the vector \( g'(\mathbf{u})^T \) to match the size of \( G \), followed by element-wise matrix multiplication.
(If one wishes to compute the Jacobian matrix of the entire right-hand side of \eqref{eq:ODEs} for use in a Rosenborg-type method, it can be expressed as $L = G .* g'(\mathbf{u})^T + F_1 .* f_1'(\mathbf{u})^T + F_2 .* f_2'(\mathbf{u})^T + \left(\beta + \frac{\alpha}{2}\right) P$.)

It is worth noting that a straightforward computation of the Lagrange basis functions on a simplex is not easily achieved.
Therefore, the local matrices in \eqref{eq:LocalMatrices1} and \eqref{eq:LocalMatrices2} are computed using the modal basis functions in \eqref{eq:FEMbasis}.
For completeness, we provide the details of their computation in Appendix B.

\begin{rem}
For a convection-diffusion-reaction equation of the form
\[u_t + \nabla \cdot \mathbf{F}(u) = \Delta g(u) + r(u),\quad \mathbf{x}\in\mathbb{R}^d,\]
the corresponding global vectorized DG formulation is given by
\[
\mathbf{u}_{t} = G\, g(\mathbf{u}) + F_1\, f_{1}(\mathbf{u}) + F_2\, f_2(\mathbf{u}) + r(\mathbf{u}) + \left(\beta + \frac{\alpha}{2}\right) P\, \mathbf{u},
\]
which is obtained by simply adding the reaction term $r(\mathbf{u})$ to the right-hand side, taking advantage of the nodal formulation.
\end{rem}

\subsubsection{Other time marching methods}

In the numerical section, we compare the performance of our ETD-RKDG algorithm with that of other time-marching methods.
We present two representatives of fourth-order strong stability preserving Runge-Kutta (SSP-RK) time integration methods: the fourth-order five-stage explicit SSP-RK method and the fourth-order four-stage diagonally implicit SSP-RK method, both of which will be used in the comparison.
For the ODE system
\begin{equation}
\mathbf{u}_t=R(\mathbf{u}),
\end{equation}
they are given as follows:
\begin{itemize}
\item Fourth-order five-stage explicit SSP-RK
\begin{equation*}
\begin{split}
{\bf u}^{(1)} = &~ \gamma_{10}{\bf u}^{n} + a_{10}\Delta t {R}({\bf u}^n),\\
{\bf u}^{(2)} = &~ \gamma_{20}{\bf u}^{n}
+\gamma_{21}{\bf u}^{(1)}+a_{21}\Delta t{R}({\bf u}^{(1)}),\\
{\bf u}^{(3)} = &~ \gamma_{30}{\bf u}^{n} + \gamma_{32}{\bf u}^{(2)}+ a_{32}\Delta t{R}({\bf u}^{(2)}),\\
{\bf u}^{(4)} = &~ \gamma_{40}{\bf u}^{n} + \gamma_{43}{\bf u}^{(3)} + a_{43}\Delta t {R}({\bf u}^{(3)}),\\
{\bf u}^{n+1} = &~ \gamma_{50}{\bf u}^{n} + \gamma_{52}{\bf u}^{(2)} +
\gamma_{53}{\bf u}^{(3)} + \gamma_{54}{\bf u}^{(4)} + a_{53} \Delta t {R}({\bf u}^{(3)}) + a_{54} \Delta t {R}({\bf u}^{(4)}),
\end{split}
\end{equation*}
where the coefficients are given as
\begin{equation*}
\begin{split}
&\gamma_{10}=1, \gamma_{20}=0.44437049406734, \gamma_{21}=0.55562950593266,\\
&\gamma_{30}=0.62010185138540,
\gamma_{32}=0.37989814861460,
\gamma_{40}=0.17807995410773,\\
&\gamma_{43}=0.82192004589227,
\gamma_{50}=0.00683325884039,
\gamma_{52}=0.51723167208978,\\
&\gamma_{53}=0.12759831133288,
\gamma_{54}=0.34833675773694,
\end{split}
\end{equation*}
and 
\begin{equation*}
\begin{split}
& a_{10}=0.39175222700392, a_{21}=0.36841059262959, a_{32}=0.25189177424738,\\
& a_{43}=0.54497475021237,
a_{53}=0.08460416338212, 
a_{54}=0.22600748319395.
\end{split}
\end{equation*}
\item Fourth-order four-stage diagonally implicit SSP-RK
\begin{equation}
\begin{split}
    \mathbf{u}^{(1)} = &~ \mathbf{u}^{n} + \Delta t(a_{11}R(\mathbf{u}^{(1)})),\\
    \mathbf{u}^{(2)} = &~ \mathbf{u}^{n} + \Delta t(a_{21}R(\mathbf{u}^{(1)})+a_{22}R(\mathbf{u}^{(2)})),\\
    \mathbf{u}^{(3)} = &~ \mathbf{u}^{n} + \Delta t(a_{31}R(\mathbf{u}^{(1)})+a_{32}R(\mathbf{u}^{(2)})+a_{33}R(\mathbf{u}^{(3)})),\\
    \mathbf{u}^{(4)} = &~ \mathbf{u}^{n} + \Delta t(a_{41}R(\mathbf{u}^{(1)})+a_{42}R(\mathbf{u}^{(2)})+a_{43}R(\mathbf{u}^{(3)})+a_{44}R(\mathbf{u}^{(4)})),\\
    \mathbf{u}^{n+1} = &~ \mathbf{u}^{n} + \Delta t(a_{51}R(\mathbf{u}^{(1)})+a_{52}R(\mathbf{u}^{(2)})+a_{53}R(\mathbf{u}^{(3)})+a_{54}R(\mathbf{u}^{(4)})),\\
\end{split}
\end{equation}
where the coefficients are given as
\begin{equation*}
\begin{split}
&a_{11}=0.119309657880174,
a_{21}=0.345451290033902,
a_{22}=0.070605579799433,\\
&a_{31}=0.2761333428381144, 
a_{32}=0.23720218154772324, 
a_{33}=0.070606483961727,\\
&a_{41}=0.25953180979776486, 
a_{42}=0.2229412458210437, 
a_{43}=0.2789071933072292, \\
&a_{44}=0.119309875536981,
a_{51}=0.27664498628127077, 
a_{52}=0.22335414879969517,\\
&a_{53}=0.22335532068802738, 
a_{54}=0.2766455442310059.
\end{split}
\end{equation*}

\end{itemize}

\section{Linear stability analysis}\label{Sect:analysis}

\subsection{Motivation}
In the previous sections, we developed ETD-RKDG methods for nonlinear degenerate parabolic equations.
By applying a Rosenborg-type treatment, we extract the linear stiff component of the diffusion term and absorb it exactly using an exponential integrating factor, while integrating the residue explicitly.
This approach essentially involves splitting a nonlinear diffusion equation 
\begin{equation*}
u_t=(a(u)u_x)_x
\end{equation*}
into the form
\begin{equation*}
u_t=a_0u_{xx}+\left((a(u)-a_0)u_{x}\right)_x,
\end{equation*}
where the first term on the right-hand side is treated exactly using an exponential integrator, and the second term is integrated explicitly. Here, $a_0$ is a constant chosen to be close to $a(u)$ at each timestep.

It is expected that such a numerical method will exhibit enhanced stability and allow for significantly larger time-step sizes.
To provide a \emph{heuristic} justification, we perform a linear stability analysis for the split form
\begin{equation}\label{eq:unscaled_eq}
u_t=a_0u_{xx} + (a-a_0)u_{xx},
\end{equation}
where the first term on the right-hand side is absorbed by the exponential integrating factor, and the second term is integrated explicitly. To facilitate the analysis, we assume that both $a_0$ and $a$ are positive constants.
A study in \cite{xu2025stability} reveals that the stability behavior of semi-discrete (discrete in time, continuous in space) ETD-RK methods closely aligns with that of the fully discrete ETD-RKDG methods.
Therefore, we focus on the analysis of the semi-discrete ETD-RK methods in this section and briefly outline the numerical investigation of the fully discrete ETD-RKDG methods to demonstrate consistency.

We shall answer two questions:
\begin{enumerate}
    \item How close $a_0$ to $a$ will ensure a stability?
    \item What kind of stability does the ETD-RK methods possess when $a_0$ closely approximate $a$?
\end{enumerate}

\subsection{Linear stability of semi-discrete ETD-RK methods}\label{Sect:Semi_Discrete_Fourier}
Before we proceed, a rescaling of the equation to its dimensionless form
\begin{equation}\label{eq:scaled_eq}
u_{t'}=\theta u_{x'x'}+(1-\theta)u_{x'x'}
\end{equation}
through change of variables
\begin{equation}
t'=t/\tau'\quad\text{and}\quad x'=x/\sqrt{a\tau'}
\end{equation}
would greatly simplify the analysis.
Here, $\tau'>0$ is an arbitrary positive constant and $\theta=\frac{a_0}{a}>0$ denotes the ratio of the stength of the two parts.
If a time marching method is stable when applied to \eqref{eq:scaled_eq} with the time-step size $\tau=1$ and for a certain $\theta>0$, then the same method is stable for \eqref{eq:unscaled_eq} with the time-step size $\tau=\tau'$, $a_0=\theta a$.

The semi-discrete (discrete in time, continuous in space) ETD-RK methods for \eqref{eq:scaled_eq} with the time-step size $\tau=1$ is given as
\begin{itemize}
\item ETD-RK1:
\begin{equation}\label{eq:ETDRK1_semi}
\begin{split}
{u}^{n+1}=&e^{ \theta\partial_{xx}}{u}^{n} + \theta^{-1}(1-\theta)\left(e^{ \theta\partial_{xx}}-I\right){u}^n,
\end{split}
\end{equation}
\item ETD-RK2:
   \begin{equation}\label{eq:ETDRK2_semi}
   \begin{split}
       {a}^n=&{u}^n+ \varphi_1 ( \theta\partial_{xx}) \partial_{xx}{u}^{n},\\
       {u}^{n+1}=&{a}^n+\varphi_{2}( \theta\partial_{xx})(-(1-\theta)\partial_{xx}{u}^n+(1-\theta)\partial_{xx}{a}^n),
   \end{split}
   \end{equation}
\item ETD-RK3:
    \begin{equation}\label{eq:ETDRK3_semi}
    \begin{split}
        {a}^n=&{u}^n+\frac{1}{2}\varphi_1(\frac{1}{2}\theta\partial_{xx})\partial_{xx}{u}^n,\\
        {b}^n=&{u}^n+\varphi_{1}( \theta\partial_{xx})((2\theta-1)\partial_{xx}{u}^n+2(1-\theta)\partial_{xx}{a}^n),\\
        {u}^{n+1}=&{u}^n+\varphi_{1}( \theta\partial_{xx})\partial_{xx}{u}^n\\
        &+\varphi_{2}( \theta\partial_{xx})(-3(1-\theta)\partial_{xx}{u}^n+4(1-\theta)\partial_{xx}{a}^n-(1-\theta)\partial_{xx}{b}^n)\\
        &+\varphi_{3}( \theta\partial_{xx})(4(1-\theta)\partial_{xx}{u}^n-8(1-\theta)\partial_{xx}{a}^n+4(1-\theta)\partial_{xx}{b}^n),
    \end{split}
    \end{equation}
\item ETD-RK4:
    \begin{equation}\label{eq:ETDRK4_semi}
    \begin{split}
        {a}^n=&{u}^n+\frac{1}{2}\varphi_{1}(\frac{1}{2}\theta\partial_{xx})(\partial_{xx}{u}^n),\\
        {b}^n=&{u}^{n}+\frac{1}{2}\varphi_{1}(\frac{1}{2} \theta\partial_{xx})(\theta\partial_{xx}{u}^n+(1-\theta)\partial_{xx}{a}^n),\\
        {c}^n=&{a}^n+\frac{1}{2}\varphi_{1}(\frac{1}{2} \theta\partial_{xx})(-(1-\theta)\partial_{xx}{u}^n+\theta\partial_{xx}{a}^n+2(1-\theta)\partial_{xx}{b}^n),\\
        {u}^{n+1}=&{u}^n+\varphi_{1}( \theta\partial_{xx})\partial_{xx}{u}^n\\
        &+\varphi_{2}( \theta\partial_{xx})(-3(1-\theta)\partial_{xx}{u}^{n}+2(1-\theta)\partial_{xx}{a}^n+2(1-\theta)\partial_{xx}{b}^n-(1-\theta)\partial_{xx}{c}^n)\\
        &+\varphi_{3}( \theta\partial_{xx})(4(1-\theta)\partial_{xx}{u}^n-4(1-\theta)\partial_{xx}{a}^n-4(1-\theta)\partial_{xx}{b}^n+4(1-\theta)\partial_{xx}{c}^n),
    \end{split}
    \end{equation}
\end{itemize}
We now assume $u^{n}, u^{n+1}\in L^{2}(\mathbb{R})$ and study the growth pattern of their $L^2$ norms.
Denoting the Fourier transform of a function $f\in L^{2}(\mathbb{R})$ by $\mathcal{F}[f]=\widehat{f}(\xi)\in L^{2}(\mathbb{R})$, it is well-known that $\mathcal{F}:L^{2}(\mathbb{R})\rightarrow L^2(\mathbb{R})$ is an isometric transform (up to a constant factor), with the property,
\begin{equation*}
\mathcal{F}[\partial_x^\alpha f]=(\ii\xi)^{\alpha}\mathcal{F}[f],
\end{equation*}
where $\ii=\sqrt{-1}$ is the imaginary unit.
We apply the Fourier transform to both sides of the semidiscrete ETD-RK methods \eqref{eq:ETDRK1_semi} -- \eqref{eq:ETDRK4_semi} to obtain
\begin{equation}\label{eq:Fourier_growth}
\widehat{u}^{n+1}(\xi)=\widehat{G}(\theta,\xi)\widehat{u}^{n}(\xi),
\end{equation}
where $\widehat{G}(\theta,\xi)$ are the growth factors in the Fourier space defined as 
\begin{itemize}
\item ETD-RK1:
\begin{equation}\label{eq:ETDRK1_G}
\begin{split}
\widehat{G}(\theta,\xi)=&e^{ -\theta\xi^2} + \theta^{-1}(1-\theta)\left(e^{ -\theta\xi^2}-1\right),
\end{split}
\end{equation}

\item ETD-RK2:
   \begin{equation}\label{eq:ETDRK2_G}
   \begin{split}
       \widehat{a}^n=&1- \varphi_1 ( -\theta\xi^2) \xi^2,\\
       \widehat{G}(\theta,\xi)=&\widehat{a}^n+\varphi_{2}( -\theta\xi^2)((1-\theta)\xi^2-(1-\theta)\xi^2\widehat{a}^n),
   \end{split}
   \end{equation}
\item ETD-RK3: 
    \begin{equation}\label{eq:ETDRK3_G}
    \begin{split}
        \widehat{a}^n=&1-\frac{1}{2}\varphi_1(-\frac{1}{2}\theta\xi^2)\xi^2,\\
        \widehat{b}^n=&1+\varphi_{1}(-\theta\xi^2)(-(2\theta-1)\xi^2-2(1-\theta)\xi^2\widehat{a}^n),\\
        \widehat{G}(\theta,\xi)=&1-\varphi_{1}( -\theta\xi^2)\xi^2\\
        &+\varphi_{2}(-\theta\xi^2)(3(1-\theta)\xi^2-4(1-\theta)\xi^2\widehat{a}^n+(1-\theta)\xi^2\widehat{b}^n)\\
        &+\varphi_{3}(-\theta\xi^2)(-4(1-\theta)\xi^2+8(1-\theta)\xi^2\widehat{a}^n-4(1-\theta)\xi^2\widehat{b}^n),
    \end{split}
    \end{equation}
\item ETD-RK4:
    \begin{equation}\label{eq:ETDRK4_G}
    \begin{split}
        \widehat{a}^n=&1-\frac{1}{2}\varphi_{1}(-\frac{1}{2}\theta\xi^2)\xi^2,\\
        \widehat{b}^n=&1+\frac{1}{2}\varphi_{1}(-\frac{1}{2} \theta\xi^2)(-\theta\xi^2-(1-\theta)\xi^2\widehat{a}^n),\\
        \widehat{c}^n=&\widehat{a}^n+\frac{1}{2}\varphi_{1}(-\frac{1}{2} \theta\xi^2)((1-\theta)\xi^2-\theta\xi^2\widehat{a}^n-2(1-\theta)\xi^2\widehat{b}^n),\\
        \widehat{G}(\theta,\xi)=&1-\varphi_{1}(- \theta\xi^2)\xi^2\\
        &+\varphi_{2}(-\theta\xi^2)(3(1-\theta)\xi^2-2(1-\theta)\xi^2\widehat{a}^n-2(1-\theta)\xi^2\widehat{b}^n+(1-\theta)\xi^2\widehat{c}^n)\\
        &+\varphi_{3}(-\theta\xi^2)(-4(1-\theta)\xi^2+4(1-\theta)\xi^2\widehat{a}^n+4(1-\theta)\xi^2\widehat{b}^n-4(1-\theta)\xi^2\widehat{c}^n),
    \end{split}
    \end{equation}
\end{itemize}
From \eqref{eq:Fourier_growth} and the isometry of the Fourier transform, we have $||u^{n+1}||_{L^2(\mathbb{R})}\leq ||u^{n}||_{L^2(\mathbb{R})}$ if $|\widehat{G}(\theta,\xi)|\leq 1, \forall \xi\in\mathbb{R}$.
The following theorem illustrates the stability condition of the semi-discrete ETD-RK1 method \eqref{eq:ETDRK1_semi}.
\begin{thm}
The semidiscrete ETD-RK1 scheme \eqref{eq:ETDRK1_semi} is stable under the condition $\theta\geq\theta_0:=\frac{1}{2}$, with the growth factor $|\widehat{G}(\theta,\xi)|\leq1$ for all $\xi$.
\end{thm}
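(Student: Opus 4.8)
The plan is to analyze the growth factor explicitly by introducing the substitution $s := \xi^2 \geq 0$, which converts the problem into a one-variable real analysis question on $s \in [0,\infty)$. From the definition of the ETD-RK1 growth factor in \eqref{eq:ETDRK1_G}, I would write $\widehat{G}(\theta,\xi) = e^{-\theta s} + \theta^{-1}(1-\theta)(e^{-\theta s}-1)$, which is real-valued for all $s\geq 0$. Grouping terms gives the cleaner expression
\begin{equation*}
\widehat{G} = \frac{1}{\theta}e^{-\theta s} - \frac{1-\theta}{\theta} = \frac{1}{\theta}e^{-\theta s} + 1 - \frac{1}{\theta}.
\end{equation*}
Since the function is real, the stability condition $|\widehat{G}|\leq 1$ is equivalent to the two-sided bound $-1 \leq \widehat{G} \leq 1$, and I would verify these two inequalities separately as functions of $s$.

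First I would establish the upper bound $\widehat{G}\leq 1$, which I expect to hold for all $\theta > 0$ with no restriction. At $s=0$ we have $\widehat{G}=1$, and differentiating in $s$ gives $\partial_s \widehat{G} = -e^{-\theta s} < 0$, so $\widehat{G}$ is strictly decreasing; hence $\widehat{G}\leq 1$ everywhere. The binding constraint is therefore the lower bound $\widehat{G}\geq -1$. As $s\to\infty$, the term $e^{-\theta s}\to 0$, so $\widehat{G}\to 1 - \frac{1}{\theta}$, which is the infimum of $\widehat{G}$ over $s\in[0,\infty)$ (consistent with monotone decrease). Thus the stability condition reduces to requiring this limiting value to satisfy $1 - \frac{1}{\theta} \geq -1$, i.e. $\frac{1}{\theta}\leq 2$, i.e. $\theta \geq \frac{1}{2}$. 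This identifies the threshold $\theta_0 = \frac12$ exactly and shows it is sharp.

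To present this cleanly I would state the monotonicity claim, deduce $\sup_s \widehat{G} = 1$ (attained at $s=0$) and $\inf_s \widehat{G} = 1 - \frac1\theta$ (the limit as $s\to\infty$), and then note that $|\widehat{G}(\theta,\xi)|\leq 1$ for all $\xi$ holds if and only if $1-\frac1\theta \geq -1$, which is exactly $\theta \geq \frac12$. The main (and only genuine) obstacle here is essentially bookkeeping: one must confirm that $\widehat{G}$ is real so that the modulus bound splits into the two scalar inequalities, and that the infimum is indeed the $s\to\infty$ limit rather than an interior value — both of which follow immediately from the sign of $\partial_s\widehat{G}$. Because the growth factor is monotone, no delicate estimate is required, and the argument is elementary; the value of the theorem lies in pinning down the precise constant $\theta_0=\frac12$ rather than in any technical difficulty of the proof.
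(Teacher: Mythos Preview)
Your proof is correct and follows essentially the same approach as the paper's: both arguments establish that $\widehat{G}$ is real-valued and strictly decreasing (you in $s=\xi^2$, the paper in $\xi$ on $[0,\infty)$ using evenness), then read off the supremum $\widehat{G}=1$ at the origin and the infimum $\widehat{G}\to 1-\tfrac{1}{\theta}=\tfrac{\theta-1}{\theta}$ at infinity, reducing the stability condition to $|1-\tfrac{1}{\theta}|\leq 1$, i.e.\ $\theta\geq\tfrac12$. Your algebraic simplification $\widehat{G}=\tfrac{1}{\theta}e^{-\theta s}+1-\tfrac{1}{\theta}$ and explicit splitting into the two one-sided bounds is slightly tidier than the paper's presentation, but the underlying idea is identical.
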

\begin{proof}
One can verify that $\widehat{G}(\theta,0) = 1$ and $\lim_{\xi\rightarrow\infty}\widehat{G}(\theta,\xi) = \frac{\theta - 1}{\theta}$.  
Since $\widehat{G}(\theta,\xi)$ is an even function of $\xi$, and
\begin{equation*}
\frac{\partial \widehat{G}(\theta,\xi)}{\partial \xi} = -2\xi e^{-\theta \xi^2} < 0, \quad \text{for } \xi > 0,
\end{equation*}
it follows that $\widehat{G}(\theta,\xi)$ is decreasing on the interval $\xi \in [0,\infty)$ and symmetric with respect to the $y$-axis.
Therefore, we have
\[
\sup_{\xi\in\mathbb{R}}|\widehat{G}(\theta,\xi)| = \max\left\{|\widehat{G}(\theta,0)|, |\widehat{G}(\theta,\infty)|\right\}=\max\{1,\left|\frac{\theta-1}{\theta}\right|\} = 1,
\]
for $\theta \geq\theta_0:=\frac{1}{2}$.
\end{proof}
In addition, one can calculate that $\widehat{G}(\theta,0)=1$ and 
\begin{equation}
\lim_{\xi \rightarrow \infty} \widehat{G}(\theta,\xi) = 
\frac{(\theta - 1)^2}{\theta^2}, \quad 
\frac{(\theta - 1)^2(\theta - 2)}{\theta^3}, \quad \text{and} \quad 
\frac{(\theta - 1)^2(\theta^2 - 4\theta + 2)}{\theta^4},
\end{equation}
for ETD-RK2, ETD-RK3, and ETD-RK4, respectively.
As a necessary condition for $\left|\widehat{G}(\theta,\xi)\right| \leq 1$, we require $\theta \geq \theta_0$, where
\begin{equation}
\theta_0 = \frac{1}{2}, \quad 
\frac{2}{3} + \sqrt[3]{-\frac{1}{27} + \frac{\sqrt{78}}{36}} + \sqrt[3]{-\frac{1}{27} - \frac{\sqrt{78}}{36}} \approx 0.6034, \quad \text{and} \quad \frac{1}{2}
\end{equation}
for ETD-RK2, ETD-RK3, and ETD-RK4, respectively, to ensure that $\left|\widehat{G}(\theta,\infty)\right| \leq 1$.
On the other hand, from the graphs of $\widehat{G}(\theta,\xi)$ versus $\xi \in [0,\infty)$, we observe that the maximum of $\left|\widehat{G}(\theta,\xi)\right|$ is always attained at either $\xi = 0$ or $\xi = \infty$ when $\theta \geq \theta_0$, making the condition also sufficient. A typical plot of $|\widehat{G}(\theta_0,\xi)|$ is shown in Figure~\ref{fig:GrowthFactors}, verifying our discussion.
\begin{figure}[htbp!]
 \centering
 \begin{subfigure}[b]{0.35\textwidth}
  \includegraphics[width=\textwidth]{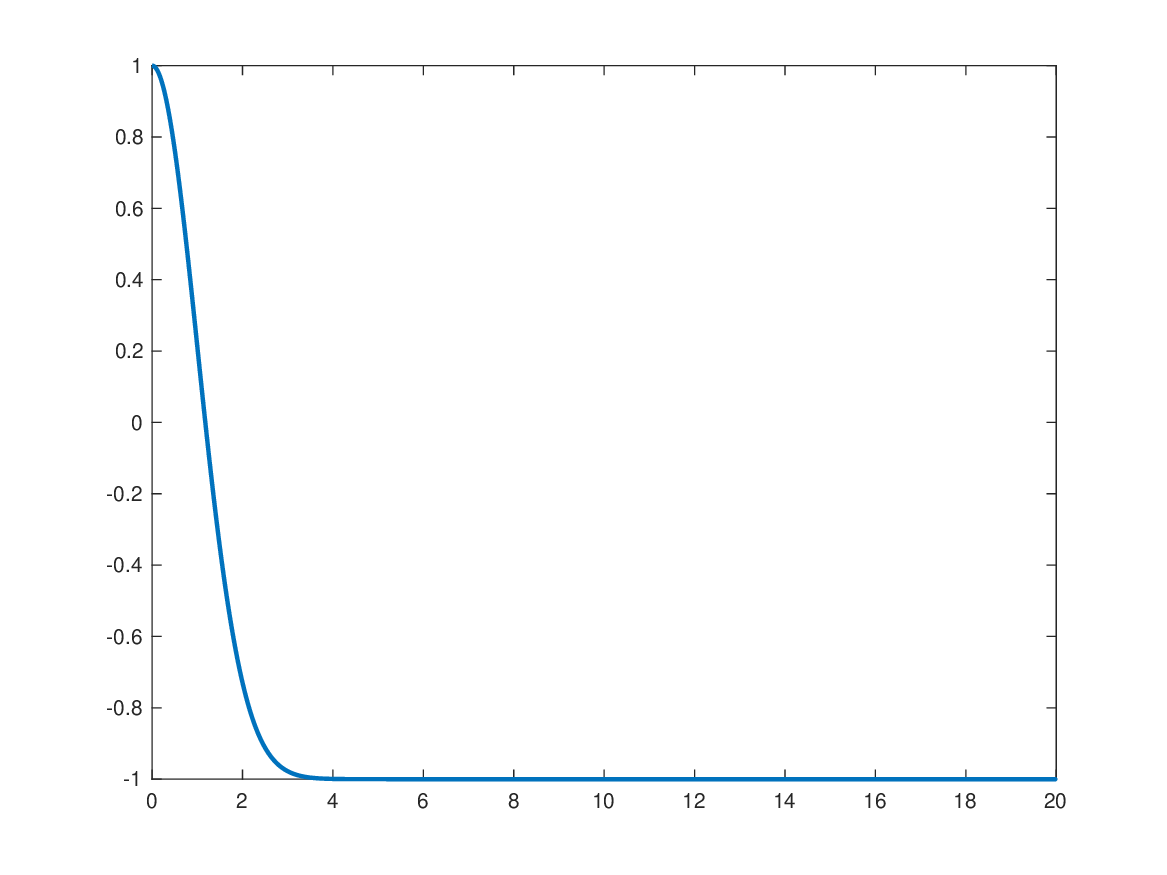}
  \caption{$\widehat{G}(\theta_0,\xi)$ for ETD-RK1}
 \end{subfigure}
 \begin{subfigure}[b]{0.35\textwidth}
  \includegraphics[width=\textwidth]{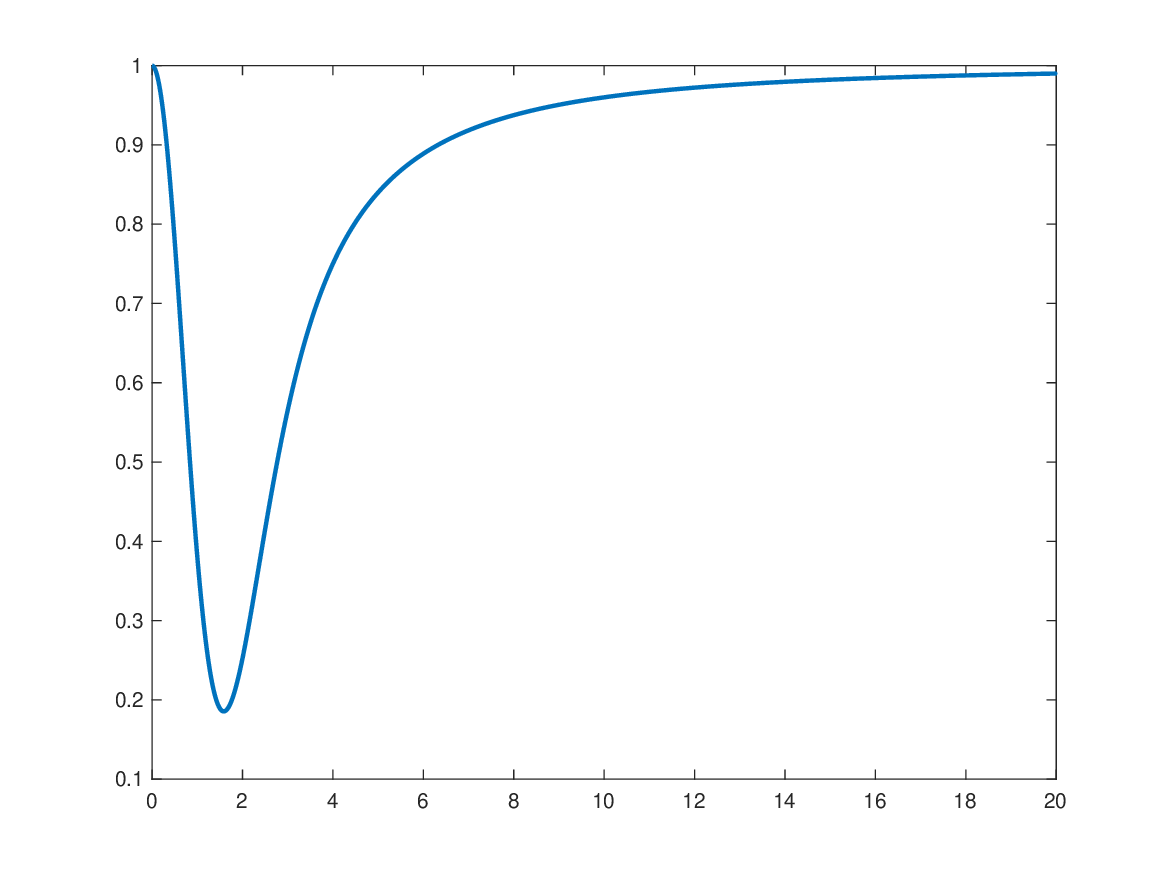}
  \caption{$\widehat{G}(\theta_0,\xi)$ for ETD-RK2}
 \end{subfigure}\\
 \begin{subfigure}[b]{0.35\textwidth}
  \includegraphics[width=\textwidth]{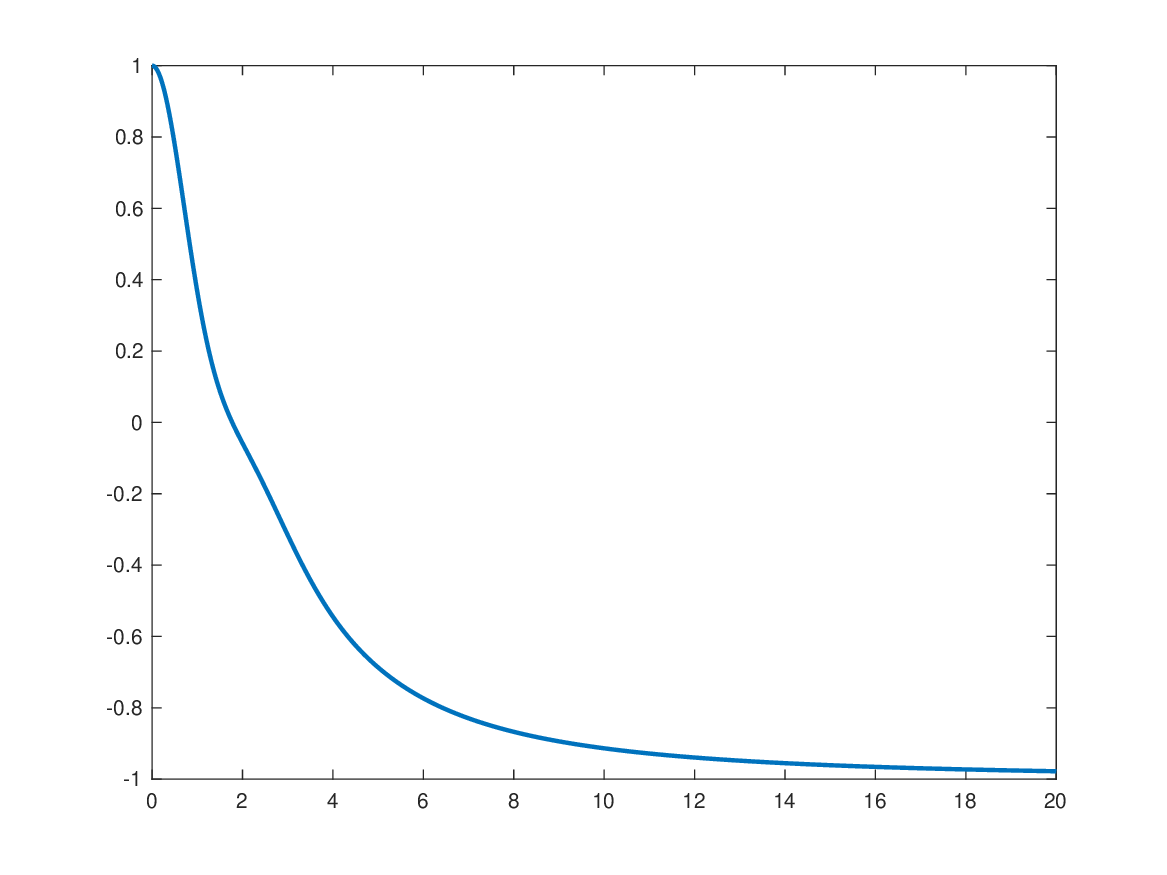}
  \caption{$\widehat{G}(\theta_0,\xi)$ for ETD-RK3}
 \end{subfigure} 
 \begin{subfigure}[b]{0.35\textwidth}
  \includegraphics[width=\textwidth]{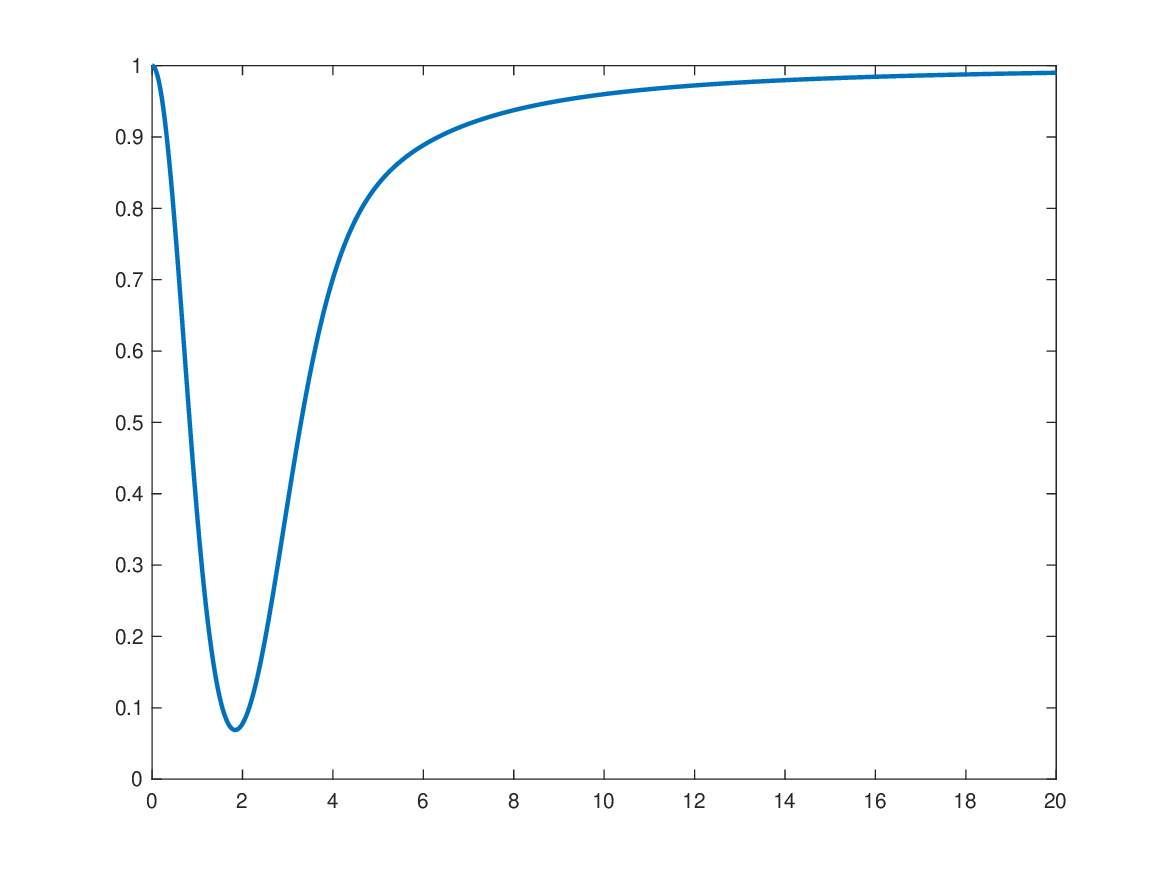}
  \caption{$\widehat{G}(\theta_0,\xi)$ for ETD-RK4}
 \end{subfigure}  
 \caption{The growth factor $\widehat{G}(\theta_0,\xi)$, with $\theta_0 = \frac{1}{2}, \frac{1}{2}, \frac{2}{3} + \sqrt[3]{-\frac{1}{27} + \frac{\sqrt{78}}{36}} + \sqrt[3]{-\frac{1}{27} - \frac{\sqrt{78}}{36}} \approx 0.6034$, and $\frac{1}{2}$ for ETD-RK1, ETD-RK2, ETD-RK3, and ETD-RK4 respectively.}
 \label{fig:GrowthFactors}
\end{figure}

\begin{rem}
We have analyzed the stability of the diffusion equation $u_t=a_0 u_{xx}+(a-a_0)u_{xx}$ and revealed its unconditional stability when $a_0 \geq \theta_0 a$, indicating that the ETD-RKDG method should provide excellent stability for the porous medium equation, which is a purely diffusive equation.
For the advection-diffusion equation, the enhancement in stability is studied in detail in \cite{xu2025stability}. Although the analysis is limited to linear problems in one-dimensional space, the study provides justification for the expected good stability of the ETD-RKDG methods.
In the next subsection, we present extensive numerical tests to further support the improvement in stability.
\end{rem}

\subsection{Linear stability of fully discrete ETD-RKDG methods}

In this subsection, we numerically investigate the stability of the fully discrete ETD-RKDG methods for \eqref{eq:scaled_eq}, and show its consistency with that of the semi-discrete ETD-RK methods.
To facilitate the Fourier analysis \cite{zhang2003analysis, xu2025stability}, we assume periodic boundary conditions and use uniform meshes.

Consider the domain $\Omega=[0,2\pi]$ with the partition $\Omega=\cup_{j=1}^{N} I_{j}=[x_{j-\frac12}, x_{j+\frac12}]$, where $x_{j+\frac12}=jh$ and $h=\frac{2\pi}{N}$ for $j=0,1,\ldots, N$.
Similar to the setup in Section \ref{eq:prelim}, we adopt the finite element space of piecewise polynomials
\begin{equation}
V^k_{h}=\{v\in L^2(\Omega): v|_{I_{j}}\in \mathcal{P}^{k}(I_{j}),\,~ \text{for}~~ j=1,2,\ldots,N\},
\end{equation}
and use values at the $(k+1)$-point LGL nodes on each cell $I_{j}$ as the DoFs for a numerical solution $u\in V_{h}^{k}$.

Using a one-dimensional version of the variational formulation \eqref{eq:DG_Diff}, we obtain the local vectorized formulation of the DG scheme on the element $I_j$ as follows:
\begin{equation}\label{eq:DG_Diff_vect1D}
\mathbf{u}^{j}_{t}=\theta\big(D_{-1}\mathbf{u}^{j-1}+D_{0}\mathbf{u}^j+D_{1}\mathbf{u}^{j+1}\big)+(1-\theta)\big(D_{-1}\mathbf{u}^{j-1}+D_{0}\mathbf{u}^j+D_{1}\mathbf{u}^{j+1}\big),
\end{equation}
where $\mathbf{u}^{j}\in\mathbb{R}^{k+1}$ is the vector of DoFs of the solution $u\in V_{h}^{k}$ on $I_{j}$ for $j=1,2,\ldots, N$, and the superscripts $j\pm1$ are understood in the cyclic sense.

Consider a Fourier mode $\mathbf{u}^{j}(t)=\widehat{\mathbf{u}}(t)e^{\ii\omega jh}$ for $j=1,\ldots,N$.
Here, $\omega$ denotes the frequency of the mode, ranging from $-\left\lfloor\frac{ N }{2}\right\rfloor$ to $\left\lfloor\frac{ N }{2}\right\rfloor$.
Substituting this into the matrix equation \eqref{eq:DG_Diff_vect1D}, we obtain the following equation governing the growth of the mode:
\begin{equation}\label{eq:ModelEq}
\frac{\dd \widehat{\mathbf{u}}(t)}{\dd t}=\theta\widehat{D}(h,\xi)\widehat{\mathbf{u}}+(1-\theta)\widehat{D}(h,\xi)\widehat{\mathbf{u}},
\end{equation}
where $\widehat{D}(h,\xi)=D_{-1}e^{-\ii\xi}+D_{0}+D_{1}e^{\ii\xi}$ and $\xi=\omega h\in[-\pi,\pi]$.

We apply the ETD-RK schemes \eqref{eq:ETDRK1} -- \eqref{eq:ETDRK4} to the model equation \eqref{eq:ModelEq}, with the treatment that the first part is absorbed into the integrating factor, while the second part is integrated explicitly.
The resulting update equations all share the following common form:
\begin{equation}
\widehat{\mathbf{u}}^{n+1}=\widehat{G}(\theta,h,\xi)\widehat{\mathbf{u}}^{n},
\end{equation}
where $\widehat{G}(\theta,h,\xi)\in\mathbb{R}^{(k+1)\times(k+1)}$ denotes the matrix growth factor of the fully discrete ETD-RKDG scheme for the Fourier mode.
These growth factors are analogues of \eqref{eq:ETDRK1_G}--\eqref{eq:ETDRK4_G}, with $-\xi^2$ replaced by $\widehat{D}(h,\xi)$.
We denote by $\rho(\widehat{G}(\theta,h,\xi))$ the spectral radius of the matrix growth factor, and seek the range of $\theta$ such that the stability condition
\begin{equation}
\sup_{\xi\in[-\pi,\pi]} \rho\big(\widehat{G}(\theta,h,\xi)\big)\leq 1, \quad \forall h>0.
\end{equation}
is satisfied.
We sample a sufficient number of $\xi$ values in $[-\pi, \pi]$ and consider various values of $h > 0$. 
An extensive numerical search reveals that the spectral radii of the growth factors remain bounded above by 1 as long as $\theta \geq \theta_0$, where the values of $\theta_0$ coincide with those obtained for the semi-discrete ETD-RK methods in Section~\ref{Sect:Semi_Discrete_Fourier}.
The stability results are independent of the spatial discretization, including both the mesh size $h$ and the polynomial degree $k$.

\section{Numerical tests}\label{Sect:tests}

In this section, we validate the performance of our scheme through numerical tests. 

\begin{exmp}
\textbf{Accuracy tests}\label{ex:accuracy}
\end{exmp}
In this example, we test the accuracy of our method on both linear and nonlinear problems.
We first consider the linear convection-diffusion equation \eqref{eq:conv-diff}
\begin{equation}\label{eq:conv-diff}
u_t + u_x + u_y = u_{xx} + u_{yy},
\end{equation}
on the domain $\Omega = [0, 2\pi]^2$ with periodic boundary conditions. The initial condition is given by $u_0(x,y) = \sin(x)\sin(y)$, and the exact solution is consequently $u(x,y,t) = e^{-2t}\sin(x - t)\sin(y - t)$.

We then consider the nonlinear diffusion-reaction equation \eqref{eq:diff-react}
\begin{equation}\label{eq:diff-react}
u_t = \Delta u^2 + r(u),
\end{equation}
with the reaction term $r(u) = (u^2 - 2)\left(2 - \frac{1}{u}\right)$ on the same domain $\Omega = [0, 2\pi]^2$. Periodic boundary conditions are applied, and the initial condition is $u_0(x,y) = \sqrt{\sin(x)\sin(y) + 2}$. The exact solution is thus given by $u(x,y,t) = \sqrt{e^{-2t}\sin(x)\sin(y) + 2}$.

We test different combinations of ETD-RK and DG methods on various meshes with different levels of refinement.
The coarsest mesh (level $0$) is shown in Figure \ref{fig:mesh_level0}.
Meshes at level $i$ are obtained by refining each triangle from level $i-1$ into four sub-triangles by connecting the midpoints of its edges.
The $L^2$ errors and orders of convergence for the linear problem \eqref{eq:conv-diff} and the nonlinear problem \eqref{eq:diff-react} are given in Tables \ref{tab:accuracy_linear} and \ref{tab:accuracy_nonlinear}, respectively.

\begin{figure}[!htbp]
 \centering
  \includegraphics[width=0.5\textwidth]{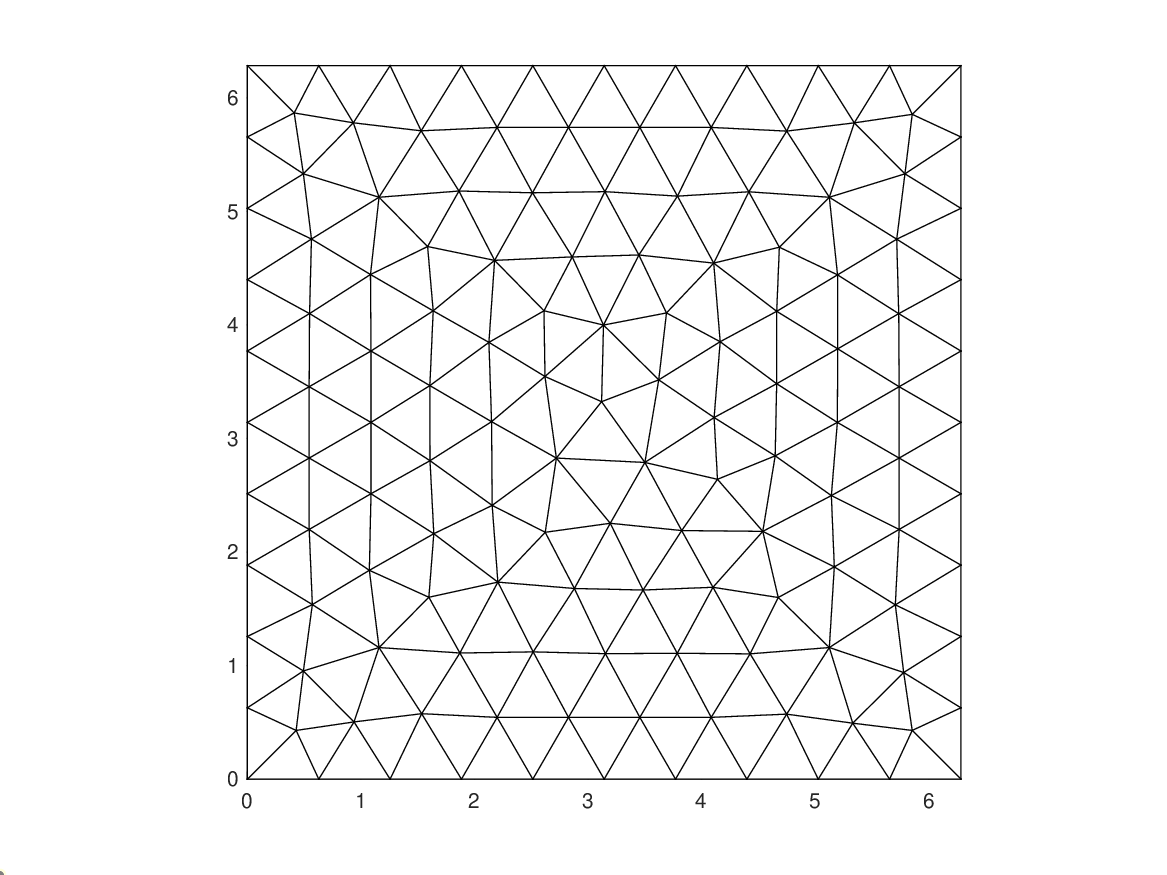}
 \caption{\textbf{Example \ref{ex:accuracy}. Accuracy test.} 
Computational mesh of level $0$ (coarsest).}
 \label{fig:mesh_level0}
\end{figure}

\begin{table}[!htbp]
\centering
\begin{tabular}{ccccccc}
\toprule[1.5pt]
\multicolumn{7}{c}{ETD-RK1} \\
\midrule
& $\mathcal{P}^1$-DG & & $\mathcal{P}^2$-DG & & $\mathcal{P}^3$-DG\\
\cline{2-3} \cline{4-5} \cline{6-7}  
$i$ & $L^2$ Error & Order & $L^2$ Error & Order& $L^2$ Error & Order\\
\midrule
$0$ & $1.20\times10^{-1}$ & -- & $1.58\times10^{-1}$ & -- & $1.58\times10^{-1}$ & --\\
$1$ & $6.16\times10^{-2}$ & $0.97$ & $7.00\times10^{-2}$ & $1.17$ & $7.00\times10^{-2}$ & $1.18$\\
$2$ & $3.14\times10^{-2}$ & $0.97$ & $3.34\times10^{-2}$ & $1.07$ & $3.34\times10^{-2}$ & $1.07$\\
$3$ & $1.57\times10^{-2}$ & $1.00$ & $1.62\times10^{-2}$ & $1.05$ & $1.62\times10^{-2}$ & $1.05$\\
\midrule
\multicolumn{7}{c}{ETD-RK2} \\
\midrule
& $\mathcal{P}^1$-DG & & $\mathcal{P}^2$-DG & & $\mathcal{P}^3$-DG\\
\cline{2-3} \cline{4-5} \cline{6-7}  
$i$ & $L^2$ Error & Order & $L^2$ Error & Order& $L^2$ Error & Order\\
\midrule
$0$ & $5.93\times10^{-2}$ & -- & $1.37\times10^{-2}$ & -- & $1.36\times10^{-2}$ & --\\
$1$ & $1.58\times10^{-2}$ & $1.91$ & $3.42\times10^{-3}$ & $2.00$ & $3.42\times10^{-3}$ & $1.99$\\
$2$ & $4.05\times10^{-3}$ & $1.97$ & $8.71\times10^{-4}$ & $1.97$ & $8.71\times10^{-4}$ & $1.97$\\
$3$ & $1.02\times10^{-3}$ & $1.99$ & $2.17\times10^{-4}$ & $2.01$ & $2.17\times10^{-4}$ & $2.01$\\
\midrule
\multicolumn{7}{c}{ETD-RK3} \\
\midrule
& $\mathcal{P}^1$-DG & & $\mathcal{P}^2$-DG & & $\mathcal{P}^3$-DG\\
\cline{2-3} \cline{4-5} \cline{6-7}  
$i$ & $L^2$ Error & Order & $L^2$ Error & Order& $L^2$ Error & Order\\
\midrule
$0$ & $5.76\times10^{-2}$ & -- & $1.61\times10^{-3}$ & -- & $9.09\times10^{-4}$ & --\\
$1$ & $1.51\times10^{-2}$ & $1.93$ & $1.82\times10^{-4}$ & $3.14$ & $1.14\times10^{-4}$ & $3.00$\\
$2$ & $3.81\times10^{-3}$ & $1.98$ & $2.20\times10^{-5}$ & $3.05$ & $1.45\times10^{-5}$ & $2.97$\\
$3$ & $9.55\times10^{-4}$ & $2.00$ & $2.68\times10^{-6}$ & $3.03$ & $1.80\times10^{-6}$ & $3.01$\\
\midrule
\multicolumn{7}{c}{ETD-RK4} \\
\midrule
& $\mathcal{P}^1$-DG & & $\mathcal{P}^2$-DG & & $\mathcal{P}^3$-DG\\
\cline{2-3} \cline{4-5} \cline{6-7}  
$i$ & $L^2$ Error & Order & $L^2$ Error & Order& $L^2$ Error & Order\\
\midrule
$0$ & $5.71\times10^{-2}$ & -- & $1.08\times10^{-3}$ & -- & $9.23\times10^{-5}$ & --\\
$1$ & $1.50\times10^{-2}$ & $1.93$ & $1.26\times10^{-4}$ & $3.09$ & $5.95\times10^{-6}$ & $3.95$\\
$2$ & $3.80\times10^{-3}$ & $1.98$ & $1.55\times10^{-5}$ & $3.03$ & $3.81\times10^{-7}$ & $3.97$\\
$3$ & $9.54\times10^{-4}$ & $1.99$ & $1.93\times10^{-6}$ & $3.01$ & $2.38\times10^{-8}$ & $4.00$\\
\bottomrule[1.5pt]
\end{tabular}
\caption{\textbf{Example \ref{ex:accuracy}. Accuracy test: linear problem.} 
$L^2$ errors and orders of convergence for the linear convection-diffusion equation \eqref{eq:conv-diff}, solved using various ETD-RKDG methods. 
Unstructured grids with different levels of refinement are used, and the time-step size is set to $\tau = h$.}
\label{tab:accuracy_linear}
\end{table}

\begin{table}[!htbp]
\centering
\begin{tabular}{ccccccc}
\toprule[1.5pt]
\multicolumn{7}{c}{ETD-RK1} \\
\midrule
& $\mathcal{P}^1$-DG & & $\mathcal{P}^2$-DG & & $\mathcal{P}^3$-DG\\
\cline{2-3} \cline{4-5} \cline{6-7}  
$i$ & $L^2$ Error & Order & $L^2$ Error & Order& $L^2$ Error & Order\\
\midrule
$0$ & $1.07\times10^{-1}$ & -- & $2.26\times10^{-1}$ & -- & $2.27\times10^{-1}$ & --\\
$1$ & $9.63\times10^{-2}$ & $0.15$ & $1.36\times10^{-1}$ & $0.73$ & $1.36\times10^{-1}$ & $0.74$\\
$2$ & $6.35\times10^{-2}$ & $0.60$ & $7.46\times10^{-2}$ & $0.86$ & $7.46\times10^{-2}$ & $0.86$\\
$3$ & $3.62\times10^{-2}$ & $0.81$ & $3.92\times10^{-2}$ & $0.93$ & $3.94\times10^{-2}$ & $0.86$\\
\midrule
\multicolumn{7}{c}{ETD-RK2} \\
\midrule
& $\mathcal{P}^1$-DG & & $\mathcal{P}^2$-DG & & $\mathcal{P}^3$-DG\\
\cline{2-3} \cline{4-5} \cline{6-7}  
$i$ & $L^2$ Error & Order & $L^2$ Error & Order& $L^2$ Error & Order\\
\midrule
$0$ & $1.17\times10^{-1}$ & -- & $6.10\times10^{-2}$ & -- & $6.32\times10^{-2}$ & --\\
$1$ & $3.16\times10^{-2}$ & $1.89$ & $1.76\times10^{-2}$ & $1.79$ & $1.78\times10^{-2}$ & $1.83$\\
$2$ & $7.96\times10^{-3}$ & $1.99$ & $4.69\times10^{-3}$ & $1.91$ & $4.70\times10^{-3}$ & $1.92$\\
$3$ & $1.98\times10^{-3}$ & $2.01$ & $1.21\times10^{-3}$ & $1.96$ & $1.21\times10^{-3}$ & $1.96$\\
\midrule
\multicolumn{7}{c}{ETD-RK3} \\
\midrule
& $\mathcal{P}^1$-DG & & $\mathcal{P}^2$-DG & & $\mathcal{P}^3$-DG\\
\cline{2-3} \cline{4-5} \cline{6-7}  
$i$ & $L^2$ Error & Order & $L^2$ Error & Order& $L^2$ Error & Order\\
\midrule
$0$ & $1.80\times10^{-1}$ & -- & $2.85\times10^{-3}$ & -- & $4.89\times10^{-3}$ & --\\
$1$ & $4.85\times10^{-2}$ & $1.89$ & $5.51\times10^{-4}$ & $2.37$ & $6.88\times10^{-4}$ & $2.83$\\
$2$ & $1.24\times10^{-2}$ & $1.97$ & $8.17\times10^{-5}$ & $2.75$ & $9.10\times10^{-5}$ & $2.92$\\
$3$ & $3.13\times10^{-3}$ & $1.99$ & $1.10\times10^{-5}$ & $2.89$ & $1.17\times10^{-5}$ & $2.96$\\
\midrule
\multicolumn{7}{c}{ETD-RK4} \\
\midrule
& $\mathcal{P}^1$-DG & & $\mathcal{P}^2$-DG & & $\mathcal{P}^3$-DG\\
\cline{2-3} \cline{4-5} \cline{6-7}  
$i$ & $L^2$ Error & Order & $L^2$ Error & Order& $L^2$ Error & Order\\
\midrule
$0$ & $1.85\times10^{-1}$ & -- & $1.78\times10^{-3}$ & -- & $3.38\times10^{-4}$ & --\\
$1$ & $4.91\times10^{-2}$ & $1.91$ & $1.17\times10^{-4}$ & $3.92$ & $2.40\times10^{-5}$ & $3.82$\\
$2$ & $1.25\times10^{-2}$ & $1.97$ & $8.89\times10^{-6}$ & $3.72$ & $1.58\times10^{-6}$ & $3.92$\\
$3$ & $3.14\times10^{-3}$ & $1.99$ & $8.46\times10^{-7}$ & $3.39$ & $1.09\times10^{-7}$ & $3.87$\\
\bottomrule[1.5pt]
\end{tabular}
\caption{\textbf{Example \ref{ex:accuracy}. Accuracy test: nonlinear problem.} 
$L^2$ errors and orders of convergence for the nonlinear diffusion-reaction equation \eqref{eq:diff-react}, solved using various ETD-RKDG methods. 
Unstructured grids with different levels of refinement are used, and the time-step size is set to $\tau = 0.2h$.}
\label{tab:accuracy_nonlinear}
\end{table}

\begin{exmp}\label{ex:Barenblatt}
\textbf{Barenblatt solution}
\end{exmp}
In this example, we test our method on the porous medium equation \eqref{eq:PME} using the Barenblatt solutions \eqref{eq:barenblatt}.
In the two-dimensional case, we simulate the solution from the initial time $t_0 = 1$ to $t = 2$ on the computational domain $\Omega = {(x, y): x^2 + y^2 < 64}$.
In the three-dimensional case, we simulate the solution from $t_0 = 1$ to $t = 3$ on the domain $\Omega = {(x, y, z): x^2 + y^2 + z^2 < 36,\ x, y, z \geq 0}$.
Homogeneous Neumann boundary conditions are imposed in both cases.
The computational meshes are shown in Figure~\ref{fig:Barenblatt2D_mesh}.
We compute the solution using ETD-RK3 and the $\mathcal{P}^2$-DG space.
Solutions and corresponding errors in two dimensions for $m = 2, 3, 5, 8$ are presented in Figure~\ref{fig:2DBarenblatt}, and the solution and corresponding error in three dimensions for $m = 3$ are shown in Figure~\ref{fig:3DBarenblatt}.

\begin{figure}[!htbp]
 \centering
 \begin{subfigure}[b]{0.45\textwidth}
  \includegraphics[width=\textwidth]{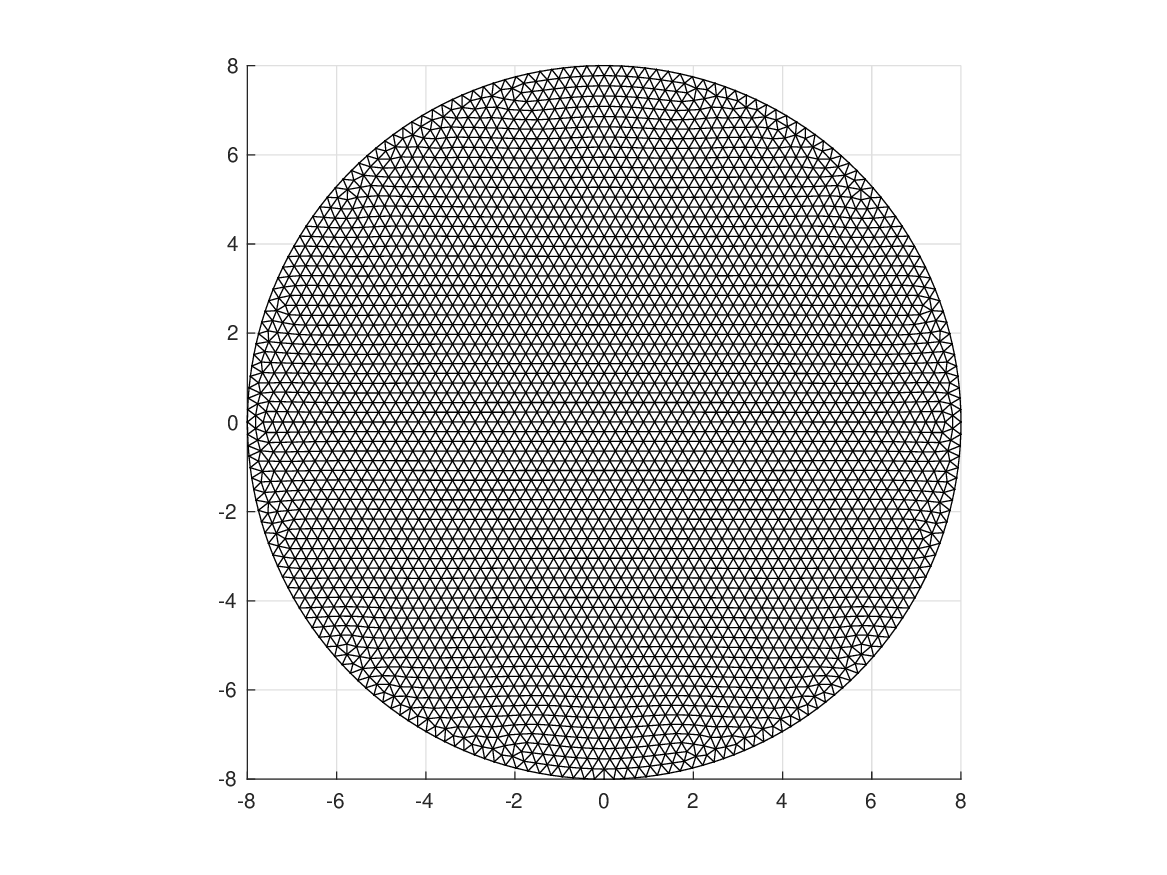}
  \caption{$2$D}
 \end{subfigure}
 \begin{subfigure}[b]{0.45\textwidth}
  \includegraphics[width=\textwidth]{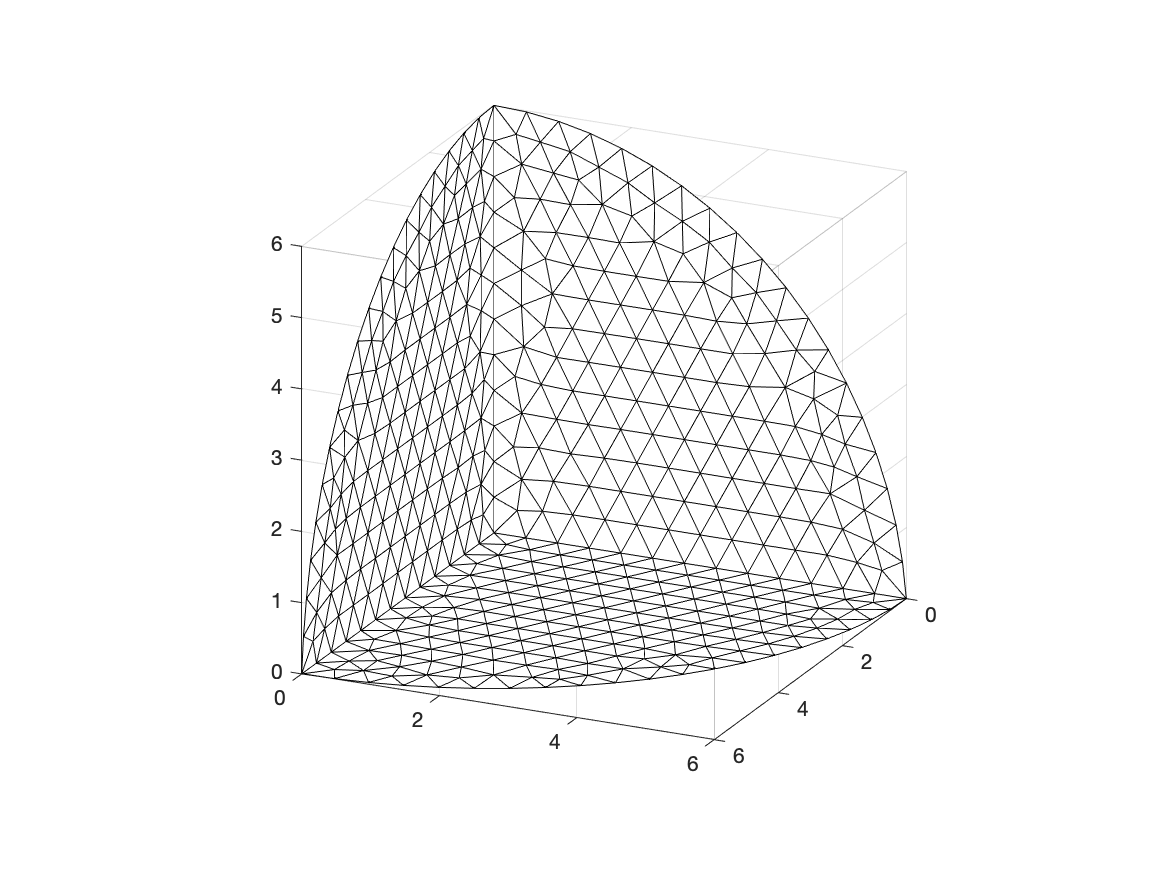}
  \caption{$3$D}
 \end{subfigure} 
 \caption{\textbf{Example \ref{ex:Barenblatt}. Barenblatt solution.} 
Computational meshes.}
 \label{fig:Barenblatt2D_mesh}
\end{figure}

\begin{figure}[!htbp]
 \centering
 \begin{subfigure}[b]{0.35\textwidth}
  \includegraphics[width=\textwidth]{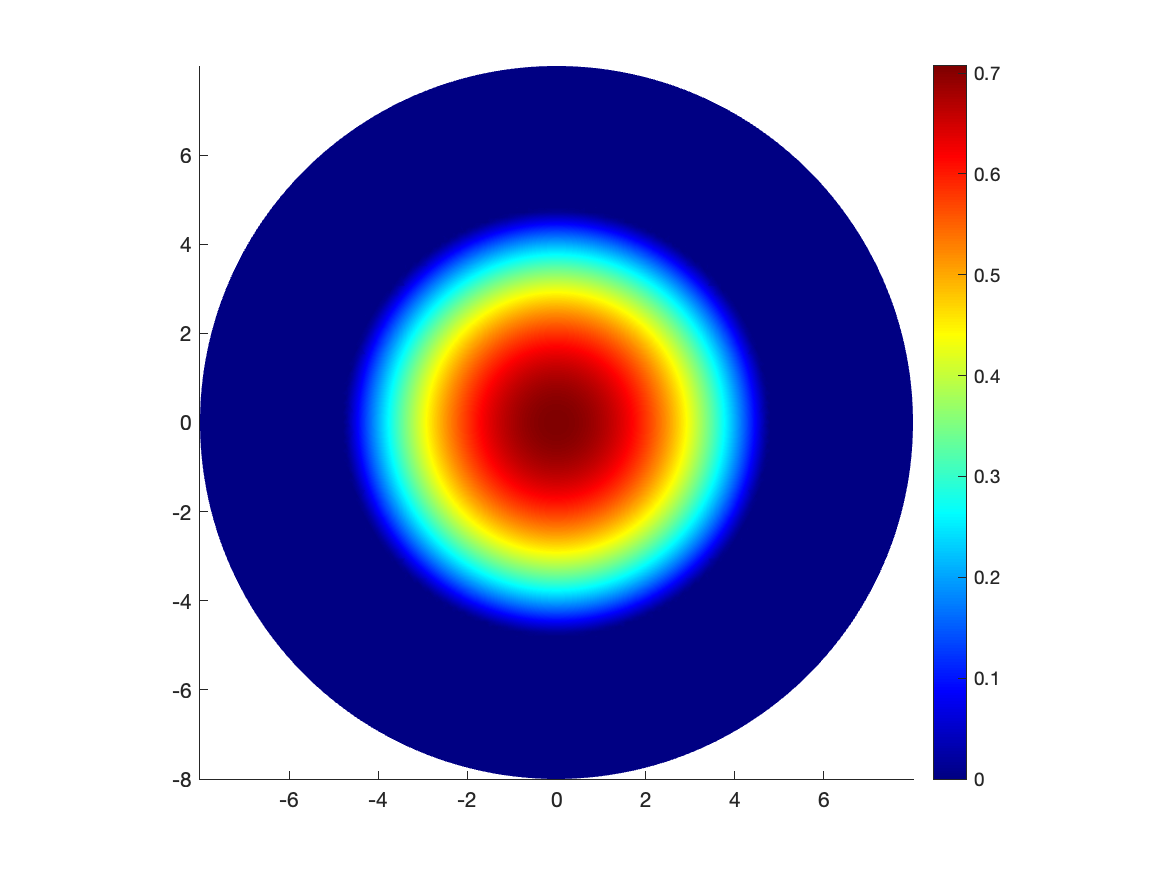}
  \caption{Solution, $m=2$}
 \end{subfigure}
 \begin{subfigure}[b]{0.35\textwidth}
  \includegraphics[width=\textwidth]{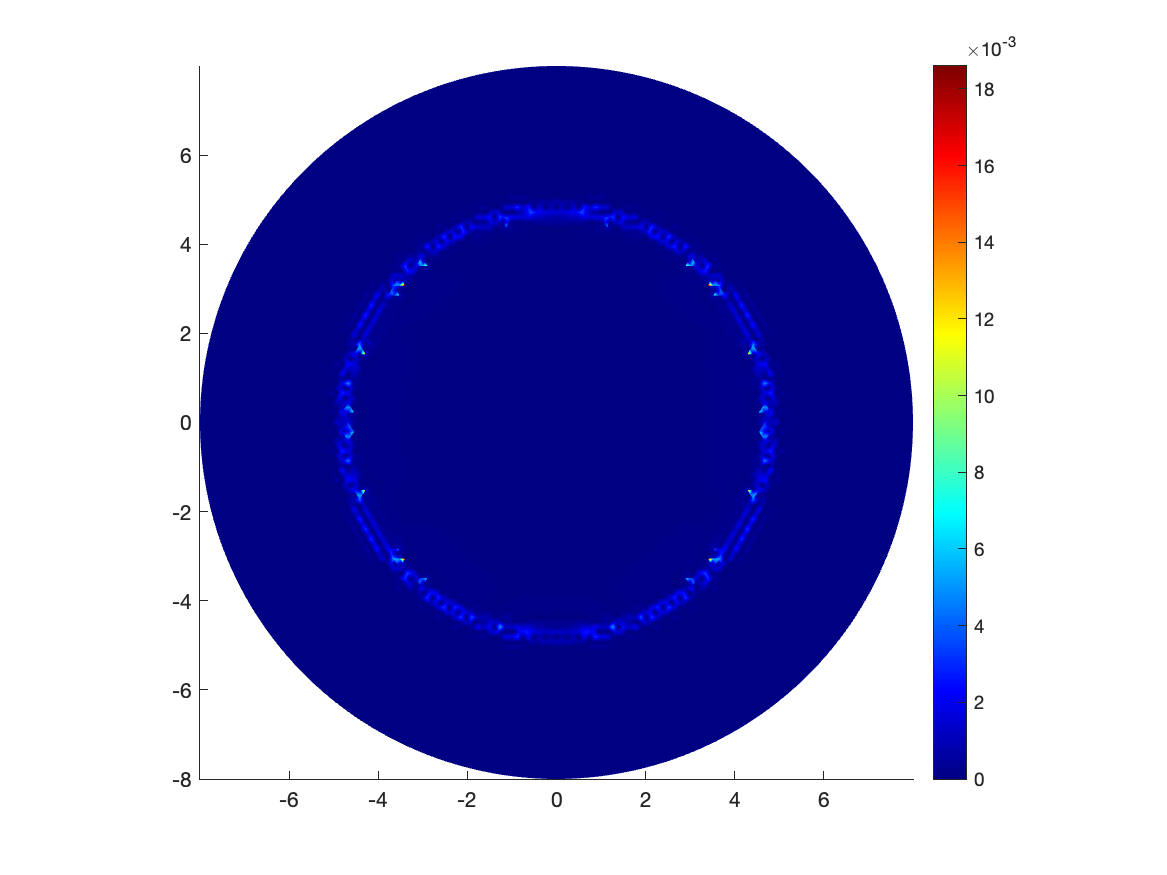}
  \caption{Error, $m=2$}
 \end{subfigure}
 \begin{subfigure}[b]{0.35\textwidth}
  \includegraphics[width=\textwidth]{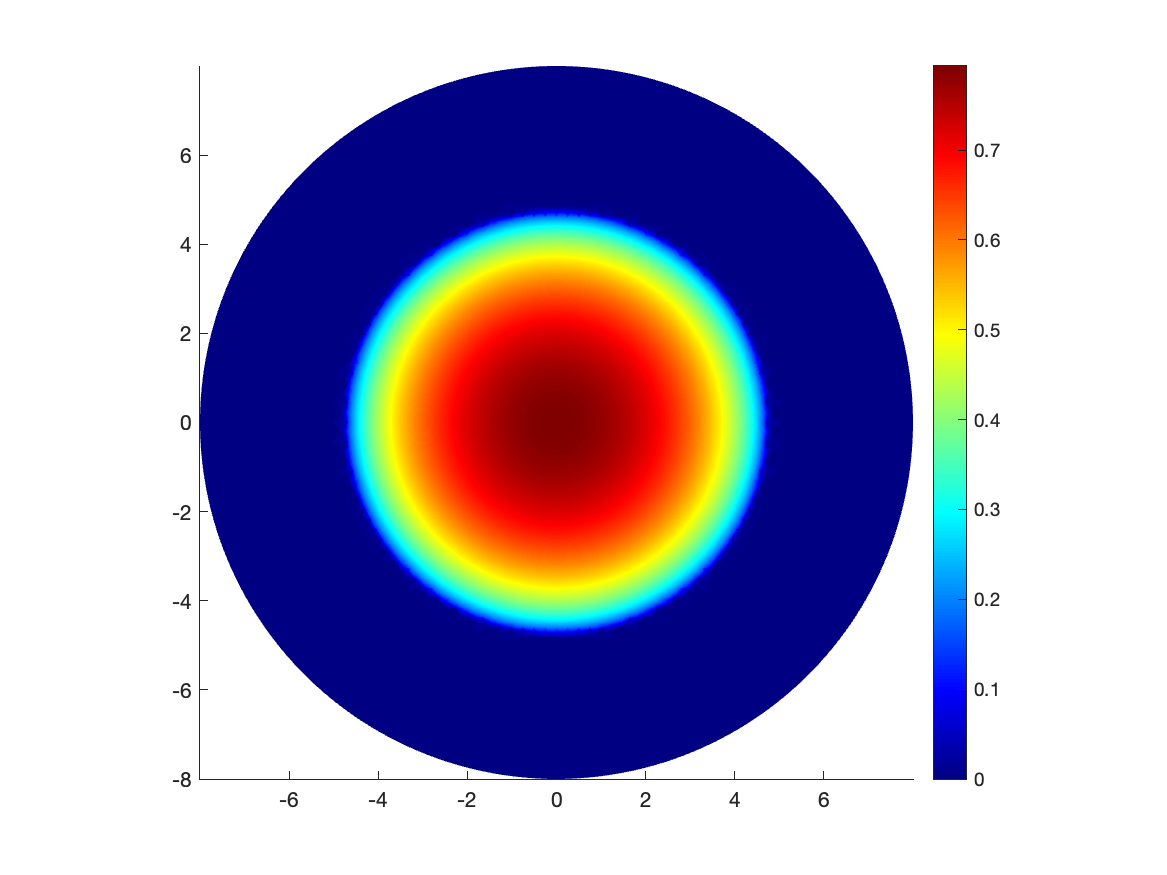}
  \caption{Solution, $m=3$}
 \end{subfigure}
 \begin{subfigure}[b]{0.35\textwidth}
  \includegraphics[width=\textwidth]{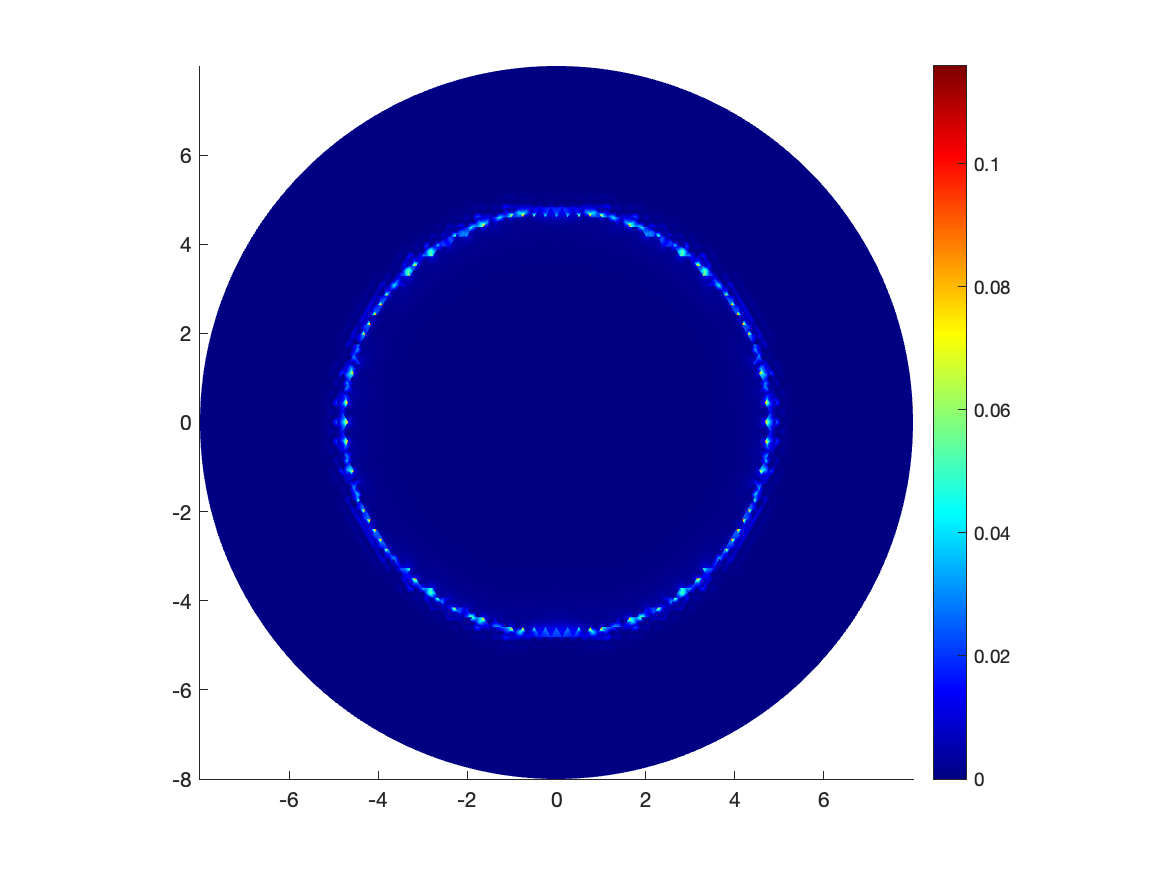}
  \caption{Error, $m=3$}
 \end{subfigure} 
 \begin{subfigure}[b]{0.35\textwidth}
  \includegraphics[width=\textwidth]{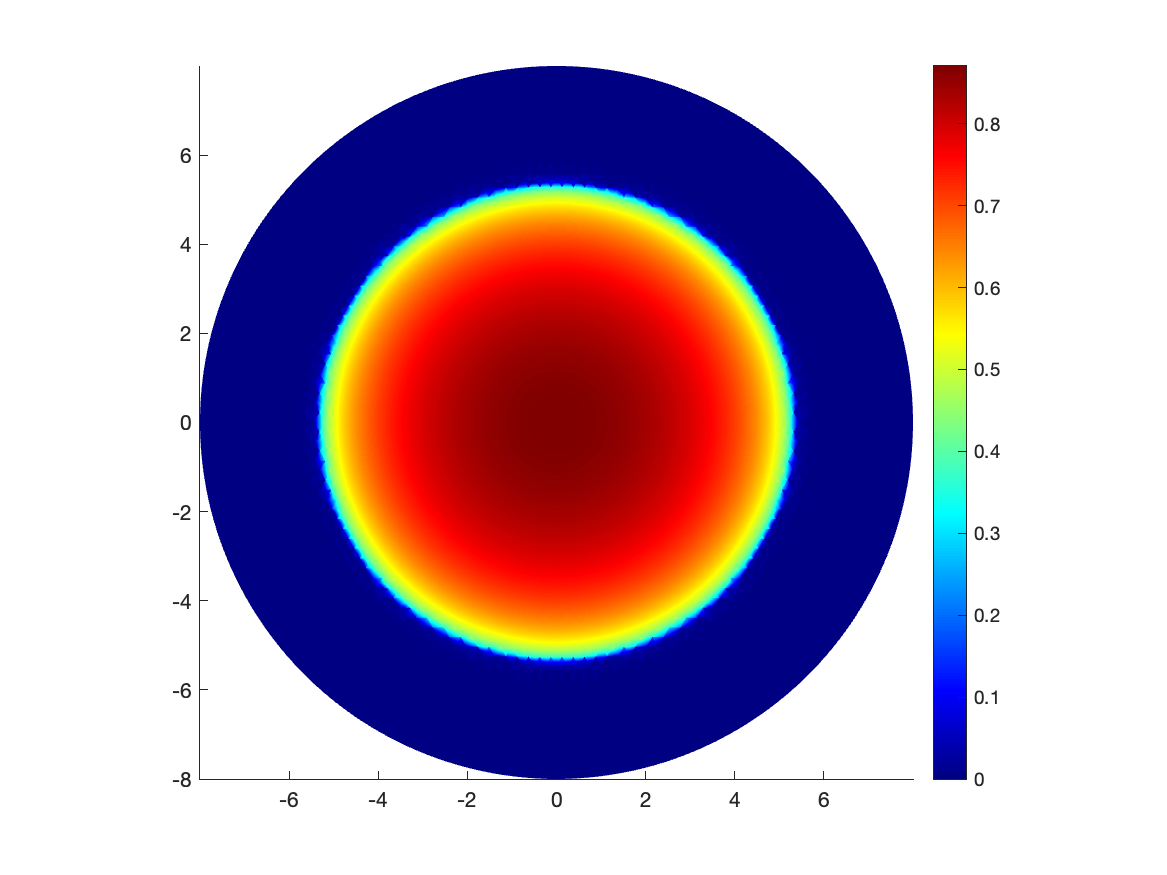}
  \caption{Solution, $m=5$}
 \end{subfigure}
 \begin{subfigure}[b]{0.35\textwidth}
  \includegraphics[width=\textwidth]{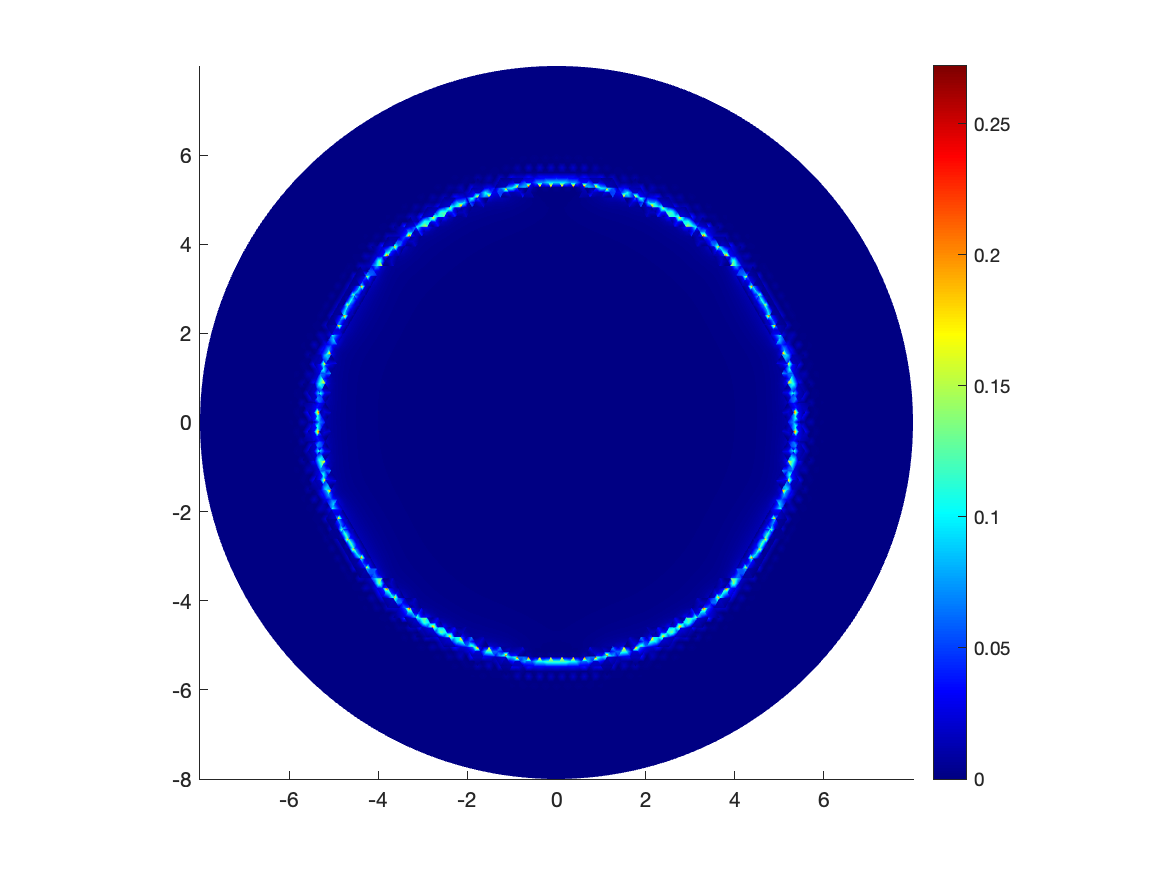}
  \caption{Error, $m=5$}
 \end{subfigure}
 \begin{subfigure}[b]{0.35\textwidth}
  \includegraphics[width=\textwidth]{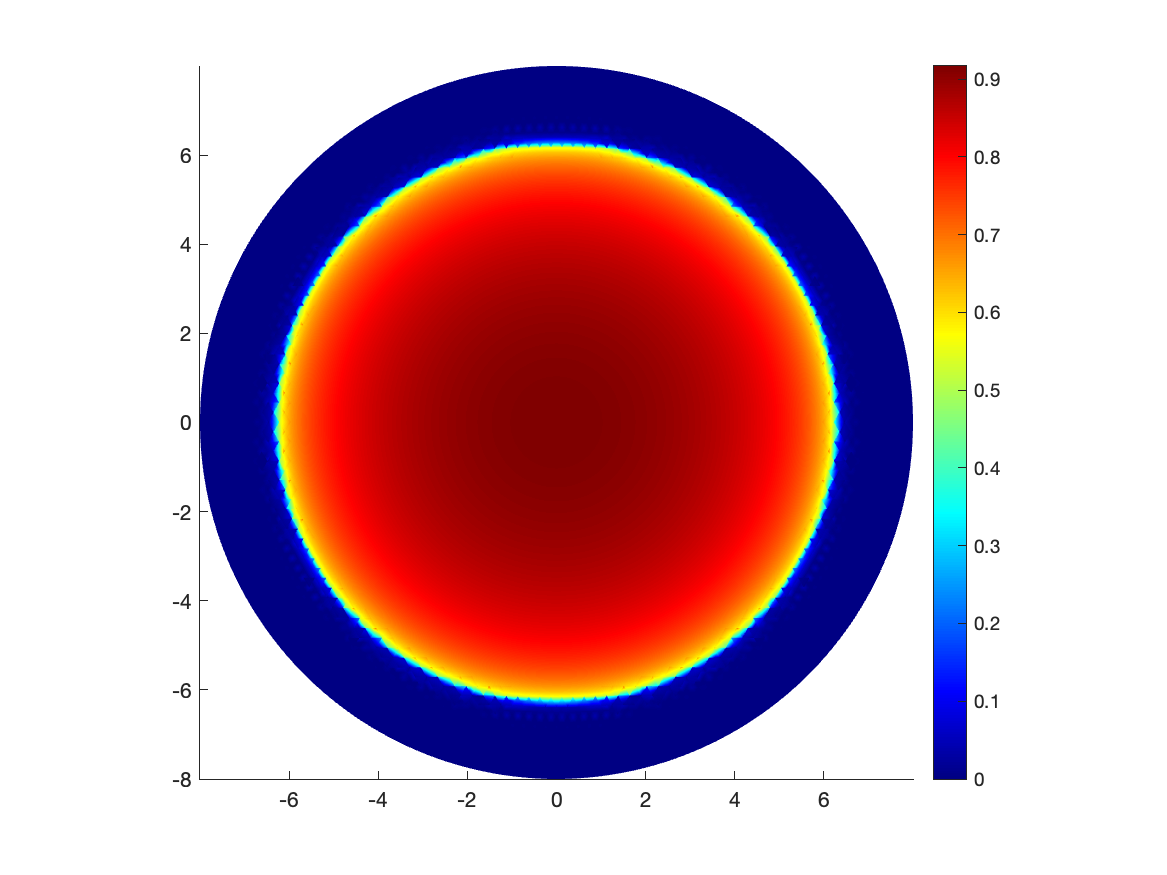}
  \caption{Solution, $m=8$}
 \end{subfigure}
 \begin{subfigure}[b]{0.35\textwidth}
  \includegraphics[width=\textwidth]{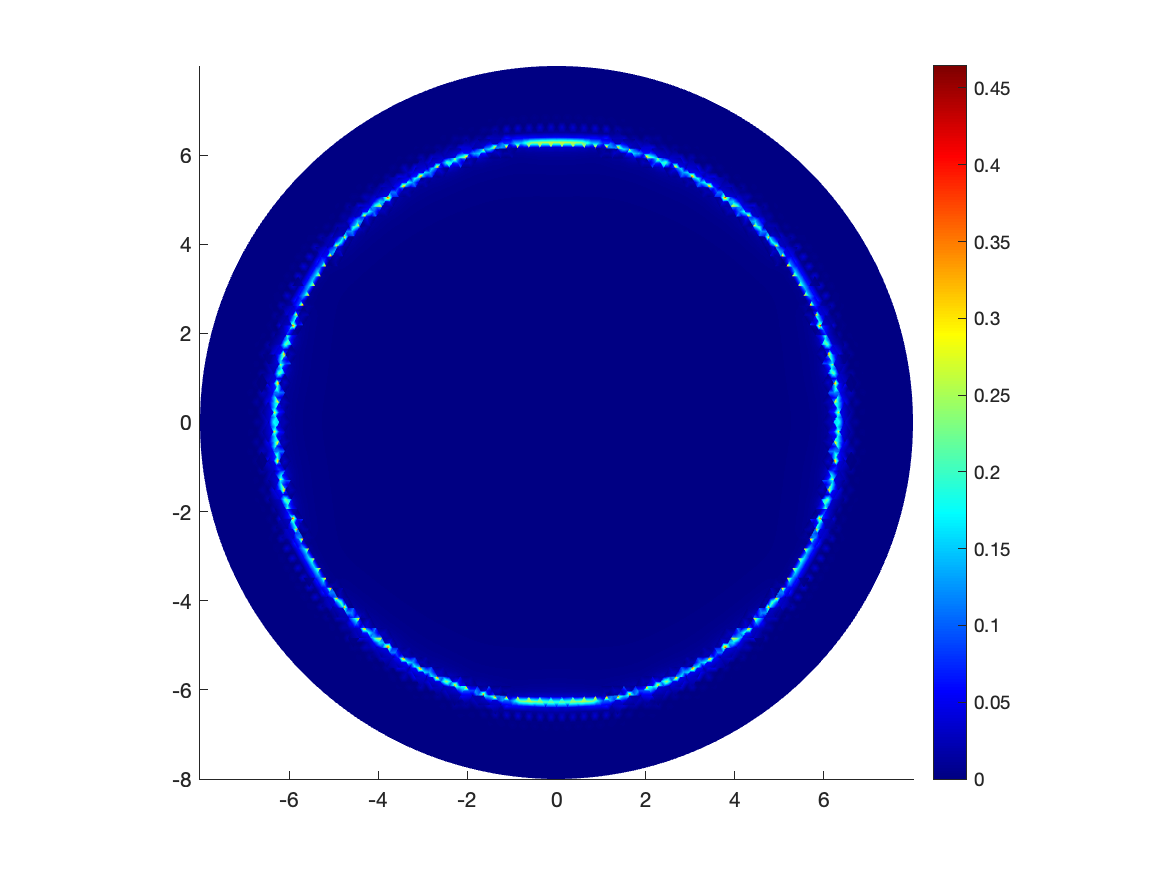}
  \caption{Error, $m=8$}
 \end{subfigure} 
 \caption{\textbf{Example \ref{ex:Barenblatt}: Barenblatt solution in 2D.} 
Numerical approximations and errors at $T=2$ to the Barenblatt solution of the two-dimensional PME.}
 \label{fig:2DBarenblatt}
\end{figure}

\begin{figure}[!htbp]
 \centering
 \begin{subfigure}[b]{0.45\textwidth}
  \includegraphics[width=\textwidth]{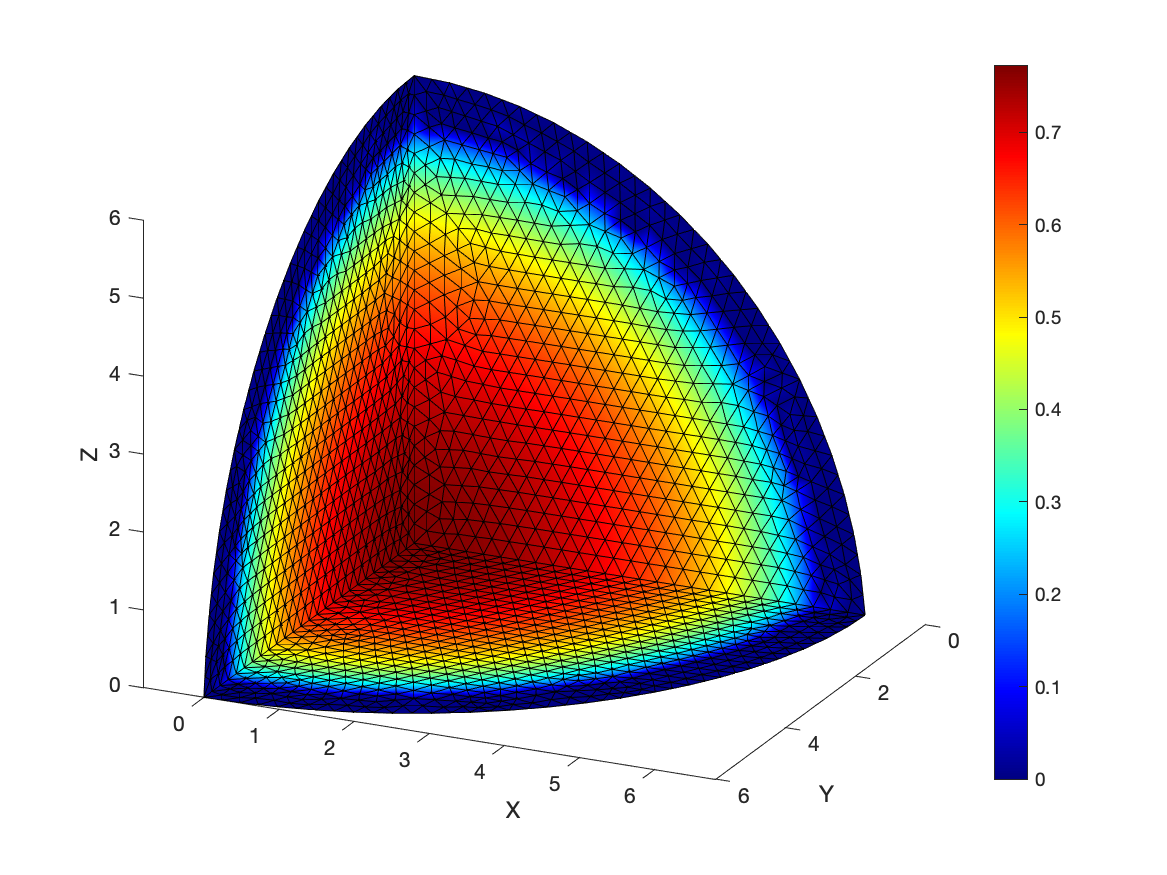}
  \caption{Solution}
 \end{subfigure}
 \begin{subfigure}[b]{0.45\textwidth}
  \includegraphics[width=\textwidth]{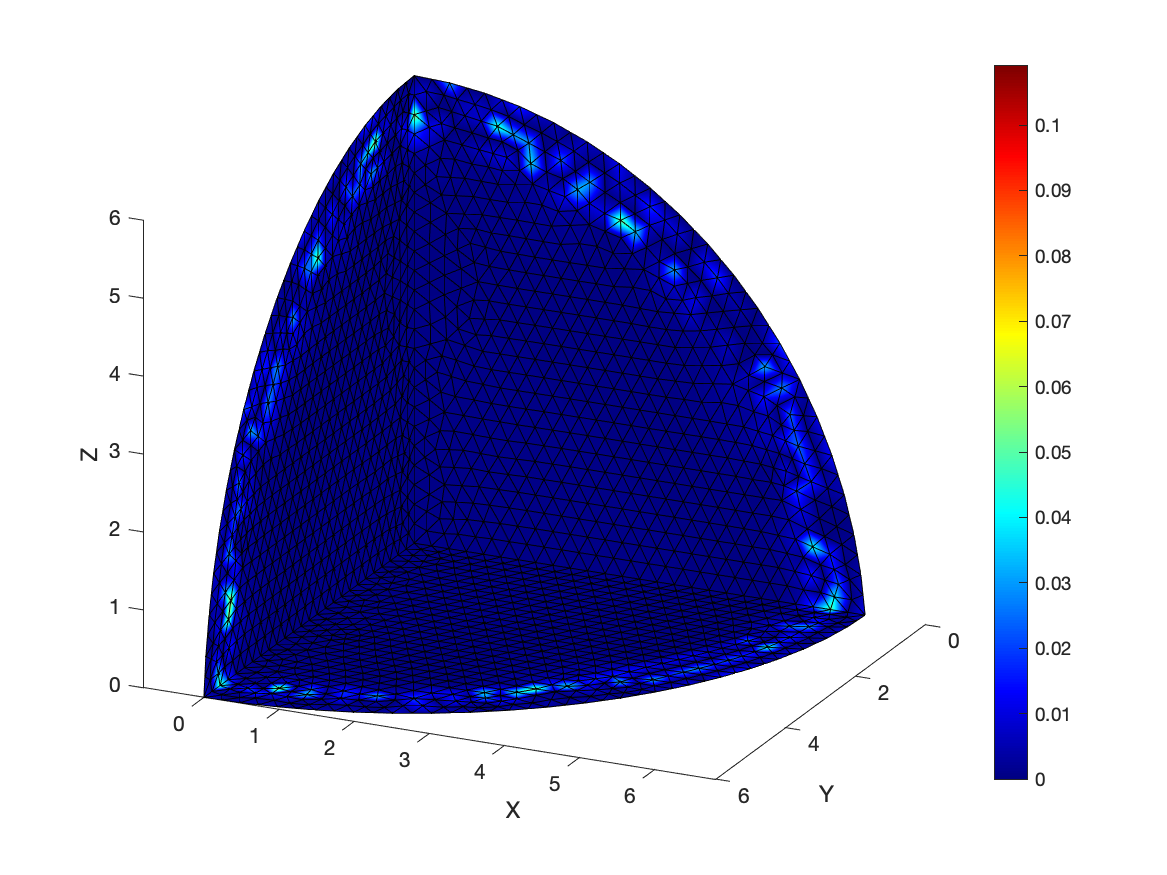}
  \caption{Error}
 \end{subfigure}
 \caption{\textbf{Example \ref{ex:Barenblatt}: Barenblatt solution in 3D.} 
Numerical approximation and error at $T=3$ to the Barenblatt solution of the three-dimensional PME with $m=3$.}
 \label{fig:3DBarenblatt}
\end{figure}







\begin{exmp}
\textbf{PME on a torus}\label{ex:Torus}
\end{exmp}
In this example, we solve the porous medium equation \eqref{eq:PME} on a three-dimensional, doughnut-shaped geometry.
The domain $\Omega$ is a torus centred at the origin with major radius \(R = 2\) and minor radius \(r = 0.5\).
A visualization of the domain and the computational mesh is shown in Figure~\ref{fig:3DTorusMesh}.

\begin{figure}[!htbp]
 \centering
  \includegraphics[width=0.5\textwidth]{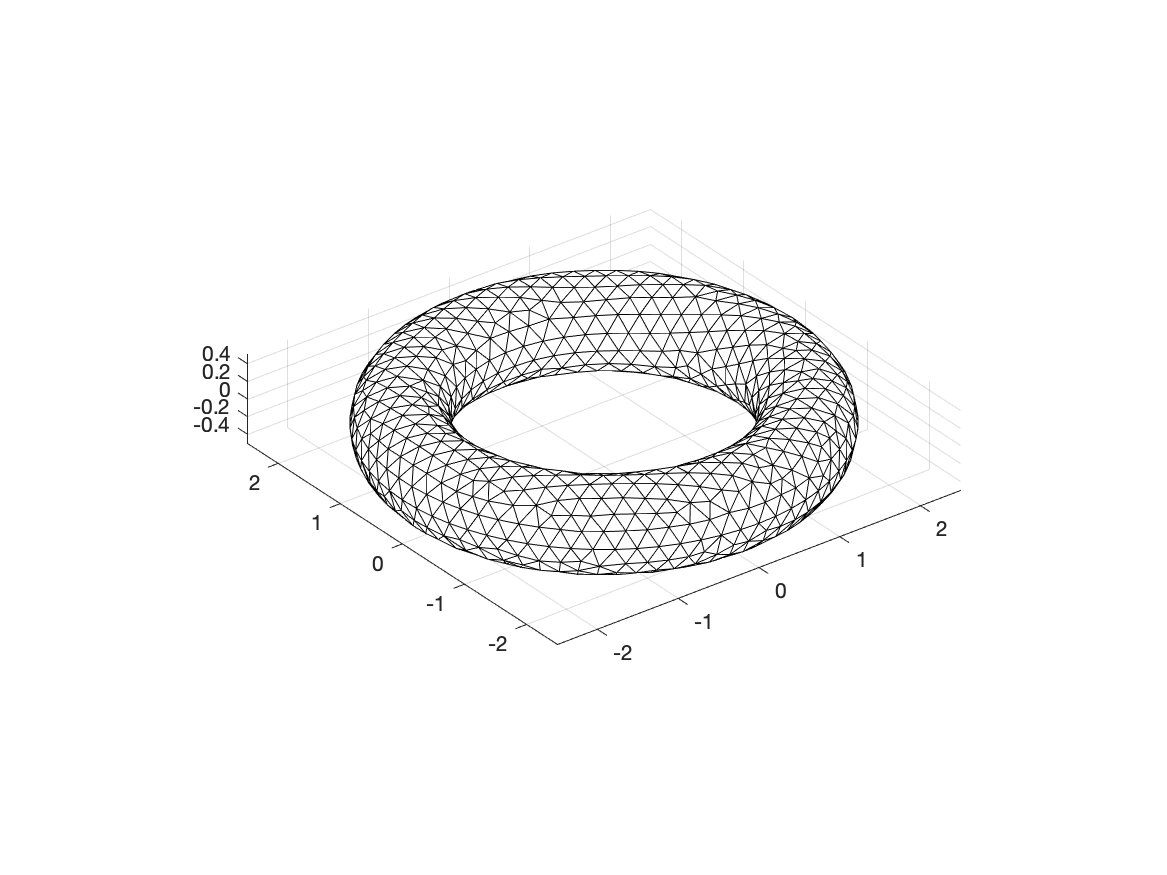}
 \caption{\textbf{Example \ref{ex:Torus}: PME on a torus.} 
Computational mesh.}
 \label{fig:3DTorusMesh}
\end{figure}

The initial condition on the torus is given by
\[
u_{0}(x,y,z)=\bigl(2.5 - r(x,y,z)\bigr)
\Bigl(
B\bigl(\theta(x,y);\theta_{1}\bigr)
+
B\bigl(\theta(x,y);\theta_{2}\bigr)
\Bigr),
\]
where \[
r(x,y,z) = \sqrt{x^{2}+y^{2}+z^{2}}, \qquad
\theta(x,y) = \operatorname{atan2}(y,x), \qquad
\Delta\theta = \frac{\pi}{5}, \quad
\theta_{1} = \frac{\pi}{4}, \quad
\theta_{2} = -\frac{\pi}{4},
\] and the bump function is defined as
\[
B(\theta;\theta_c) =
\begin{cases}
\displaystyle \exp\!\left(-\dfrac{0.2}{\Delta\theta^{2}-(\theta-\theta_c)^{2}}\right), & |\theta-\theta_c| < \Delta\theta, \\[10pt]
0, & \text{otherwise}.
\end{cases}
\]
A homogeneous boundary condition is imposed on the surface of the torus.

We compute the solution using the ETD-RK3 time integrator and $\mathcal{P}^2$-DG discretization.
Solutions at times \(T = 0,\;0.2,\;0.5,\;5\) are shown in Figure \ref{fig:3DTorus}.

\begin{figure}[!htbp]
 \centering
 \begin{subfigure}[b]{0.45\textwidth}
  \includegraphics[width=\textwidth]{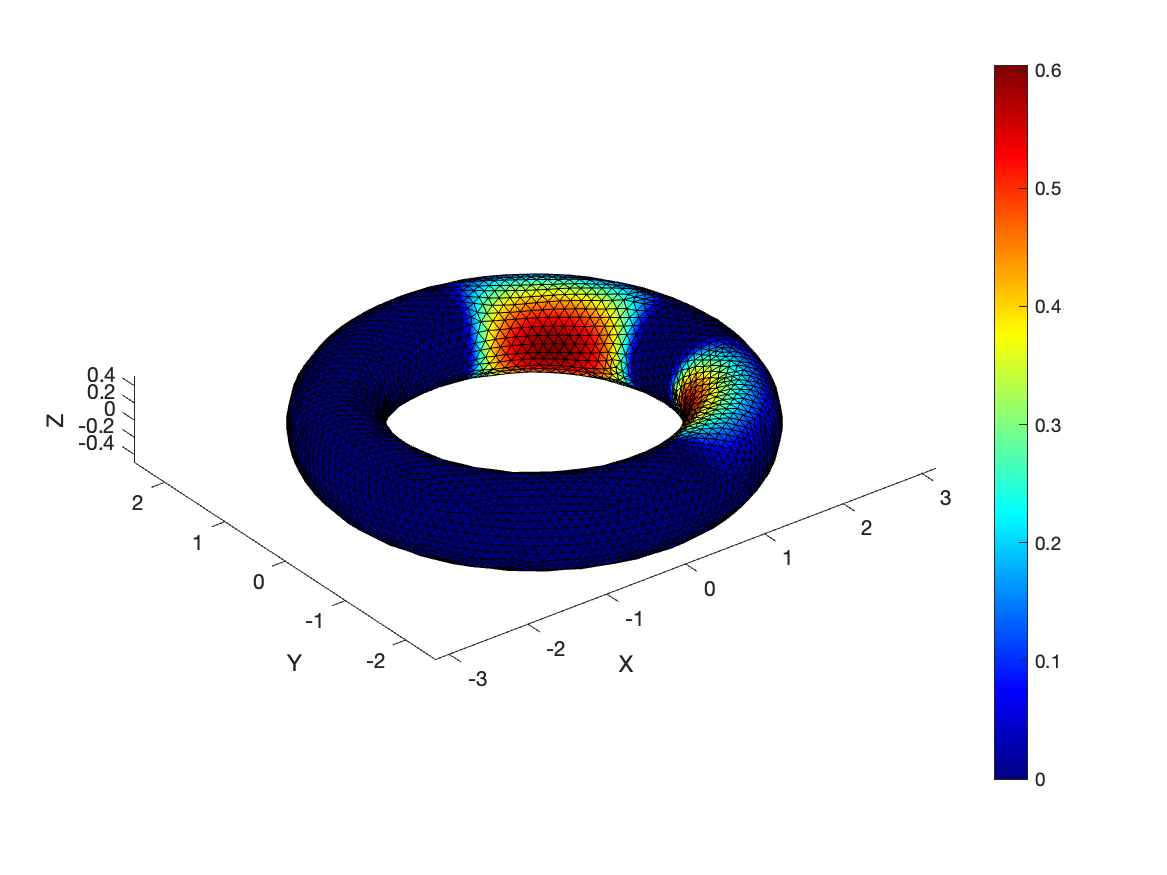}
  \caption{$T=0$}
 \end{subfigure}
 \begin{subfigure}[b]{0.45\textwidth}
  \includegraphics[width=\textwidth]{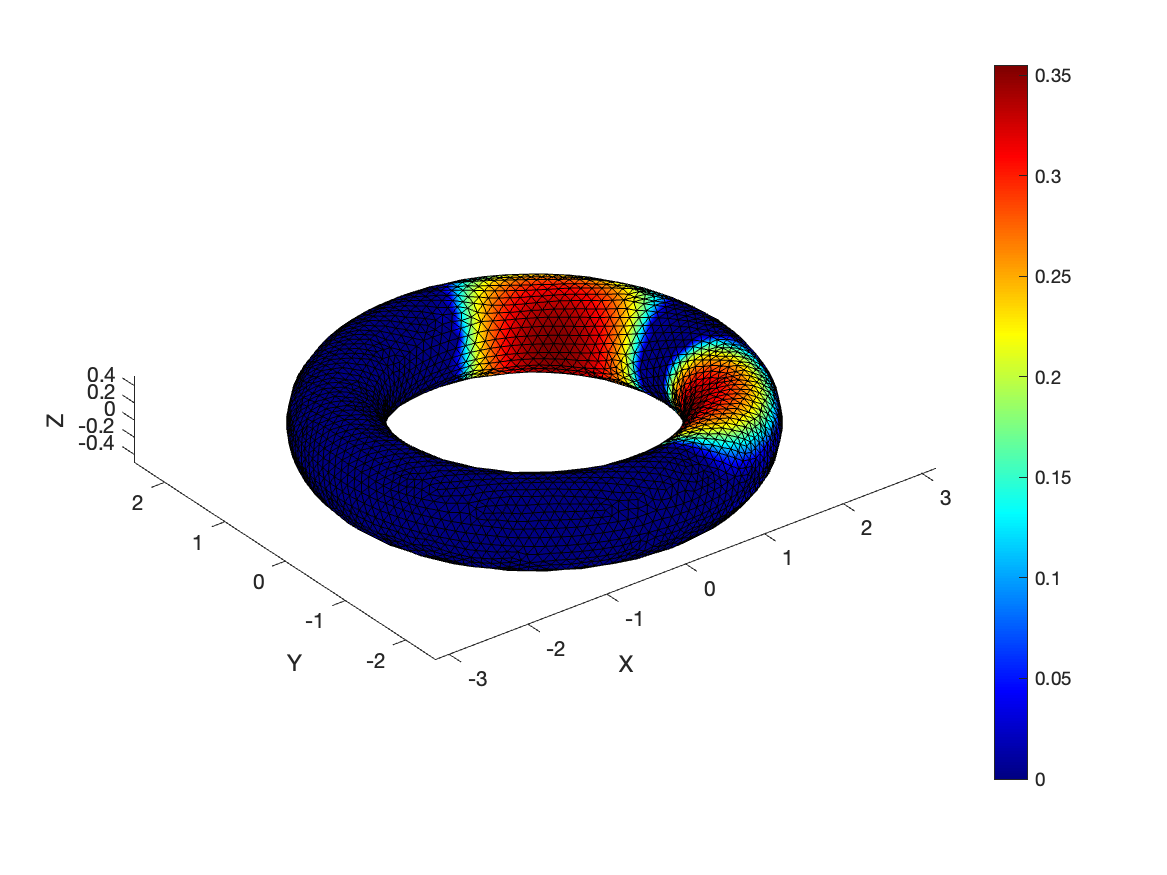}
  \caption{$T=0.2$}
 \end{subfigure}
 \begin{subfigure}[b]{0.45\textwidth}
  \includegraphics[width=\textwidth]{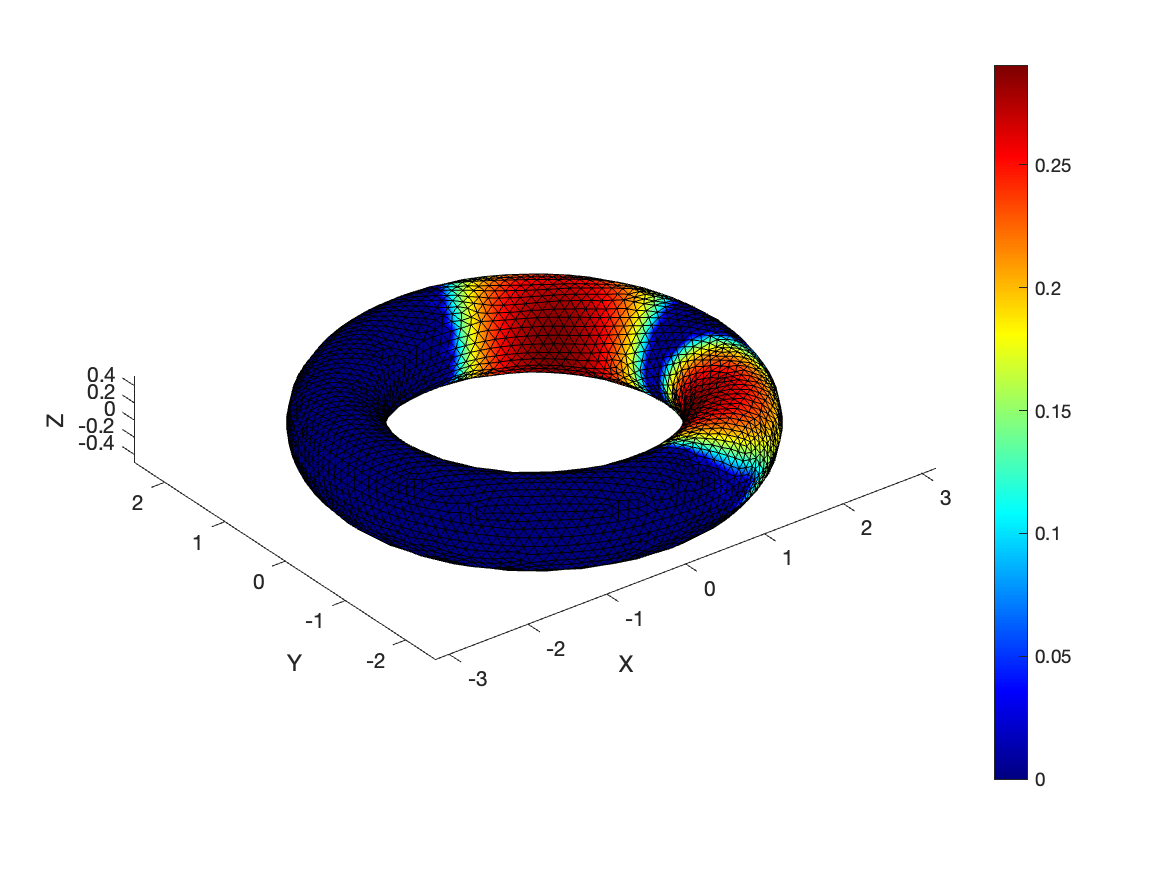}
  \caption{$T=0.5$}
 \end{subfigure}
 \begin{subfigure}[b]{0.45\textwidth}
  \includegraphics[width=\textwidth]{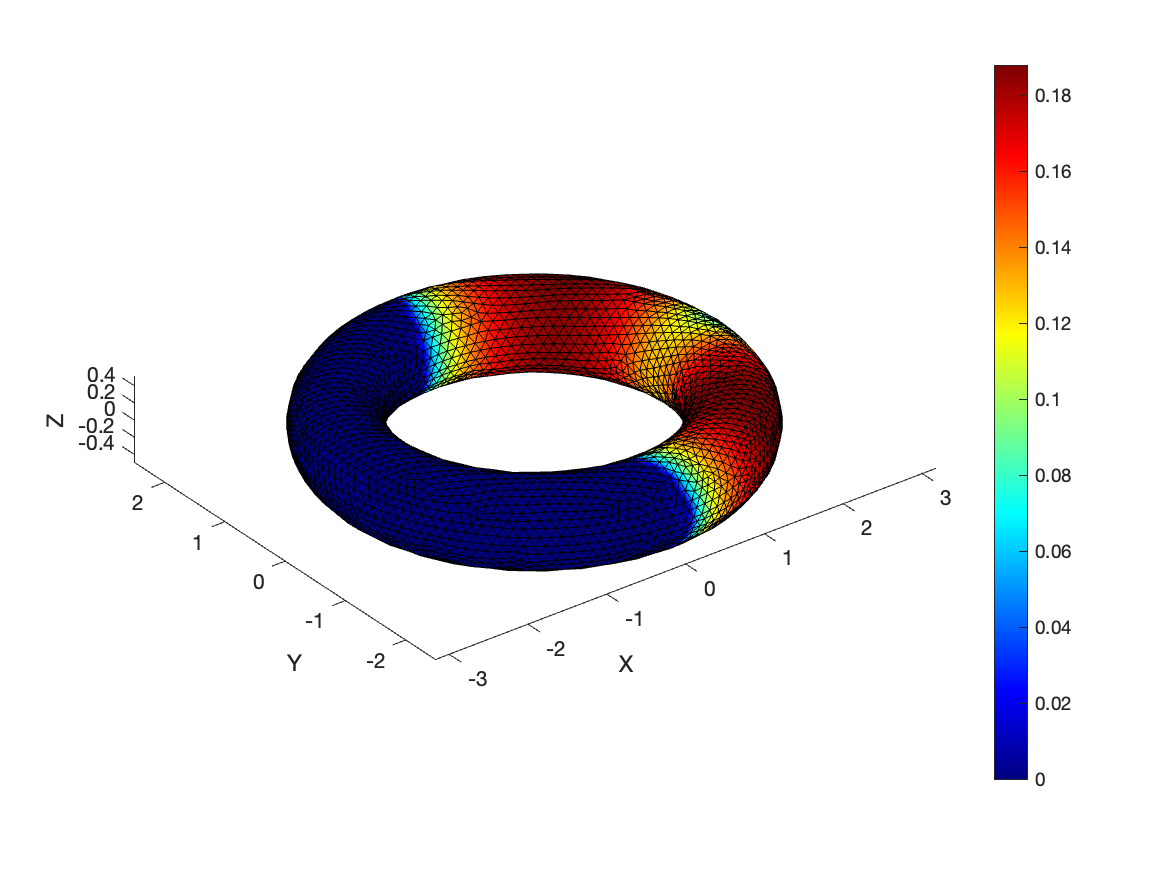}
  \caption{$T=5$}
 \end{subfigure}
 \caption{\textbf{Example \ref{ex:Torus}.} 
Numerical solutions of the PME on a torus at different times.}
 \label{fig:3DTorus}
\end{figure}

\section{Conclusions}\label{Sect:summary}
In this paper, we develop a class of efficient exponential time differencing Runge-Kutta discontinuous Galerkin methods for solving nonlinear degenerate parabolic equations.
The proposed DG methods are flexible for handling complex domain geometries, which improves our previous work on ETD-RK WENO methods \cite{xu2025high}, and they maintain both high-order accuracy and nonlinear stability of the simulations.
By applying a Rosenbrock-type treatment in the ETD-RK schemes, we extract the stiff linear component of the nonlinear diffusion term and absorb it exactly using an exponential integrating factor, which allows for large time-step sizes and greatly enhances stability of the computations. The stiffness of the nonlinear degenerate parabolic PDEs is resolved well, and a high efficiency of the simulations is achieved.
To facilitate the computation of the Jacobian matrix in the  Rosenbrock-type treatment and its implementation, we adopt the nodal formulation, which is highly vectorized and well suited to the proposed ETD-RKDG methods.
A heuristic justification for the improved stability is presented through a linear stability analysis of the one-dimensional case using the Fourier method.
Finally, numerical validation demonstrates the excellent performance of the proposed methods.

\section*{Acknowledgments}
The authors are grateful to Dr. Zheng Sun for bringing some results of IMEX methods to our attention and for inspiring discussions.

\section*{Conflict of Interest}
The authors declare that they have no known competing financial interests or personal
relationships that could have appeared to influence the work reported in this paper.


\begin{appendices}
\setcounter{figure}{0} 
\setcounter{equation}{0} 
\renewcommand{\theequation}{\thesection.\arabic{equation}} 
\renewcommand\thefigure{\Alph{section}\arabic{figure}}

\section{Exact coordinates of the Lagrange nodes on the reference element $\widehat{K}$}\label{appd:LagrangeNodes}

Since the Lagrange nodes of \( \mathcal{P}^k(\widehat{K}) \) coincide with the \( (k+1) \) Gauss–Lobatto points on each edge, we list only the coordinates of the interior nodes in Table~\ref{tab:barycoor}.
\begin{table}[!htbp]
\centering
\begin{tabular}{c|c}
\hline
$\mathcal{P}^k(K)$ & $(\widehat{x}_1,\widehat{x}_2)$ \\ 
\hline
\multirow{1}{*}{$k=1$}    & N/A \\ 
\hline
\multirow{1}{*}{$k=2$}    & N/A \\ 
\hline
\multirow{1}{*}{$k=3$}    & $(0.333333333333333, 0.333333333333333)$ \\ 
\hline
\multirow{3}{*}{$k=4$} & $(0.551583507555306, 0.224208246222347)$ \\  
                                  & $(0.224208246222347, 0.551583507555306)$ \\ 
                                  & $(0.224208246222347, 0.224208246222347)$ \\ 
\hline
\multirow{6}{*}{$k=5$}    & (0.684472514501909, 0.157763742749046) \\ 
                          & (0.414377261333963, 0.414377261333963) \\ 
                          & (0.157763742749046, 0.684472514501908) \\ 
                          & (0.414377261333963, 0.171245477332075) \\ 
                          & (0.171245477332074, 0.414377261333963) \\ 
                          & (0.157763742749046, 0.157763742749046) \\ 
\hline
\end{tabular}
\caption{Coordinates of the interior Lagrange nodes on the reference element \(\widehat{K}\)}\label{tab:barycoor}
\end{table}

\section{Computation of local matrices in \eqref{eq:LocalMatrices1} and \eqref{eq:LocalMatrices2}}\label{appd:LocalMatrices}

To facilitate the computation of the Lagrange basis functions in \eqref{eq:FEMbasis}, we introduce the Vandermonde matrix $\widehat{\mathcal{V}}$ and its gradient matrices $\widehat{\mathcal{V}}^{(1,0)}$ and $\widehat{\mathcal{V}}^{(0,1)}$ in $\mathbb{R}^{N_k \times N_k}$ on the reference element $\widehat{K}$, defined by
\begin{equation}
\widehat{\mathcal{V}}_{ij} = \widehat{\phi}_j(\widehat{\mathbf{x}}_i),\quad \widehat{\mathcal{V}}_{ij}^{(1,0)} = \frac{\partial \widehat{\phi}_{j}}{\partial \widehat{x}_1}(\widehat{\mathbf{x}}_i),\quad \widehat{\mathcal{V}}_{ij}^{(0,1)} = \frac{\partial \widehat{\phi}_{j}}{\partial \widehat{x}_2}(\widehat{\mathbf{x}}_i),\quad i,j=1,\ldots, N_{k}.
\end{equation}
The nodal basis functions $\widehat{\ell}_i(\widehat{\mathbf{x}})$ and the modal basis functions $\widehat{\phi}_i(\widehat{\mathbf{x}})$ in $\mathcal{P}^{k}(\widehat{K})$ are connected through the Vandermonde matrix as follows:
\begin{equation}
[\widehat{\ell}_1(\widehat{\mathbf{x}}),\ldots, \widehat{\ell}_{N_k}(\widehat{\mathbf{x}})]=[\widehat{\phi}_1(\widehat{\mathbf{x}}),\ldots, \widehat{\phi}_{N_k}(\widehat{\mathbf{x}})] \widehat{\mathcal{V}}^{-1}.
\end{equation}
The local matrices $\mathcal{M}_{\widehat{K}}$, 
$\mathcal{S}^{(1,0)}_{\widehat{K}}$, 
$\mathcal{S}^{(0,1)}_{\widehat{K}}$, 
$\mathcal{S}^{(2,0)}_{\widehat{K}}$, $\mathcal{S}^{(0,2)}_{\widehat{K}}$, and 
$\mathcal{S}^{(1,1)}_{\widehat{K}}$ in \eqref{eq:LocalMatrices1}, defined on the reference element $\widehat{K}$, can then be calculated as
\begin{equation}
\begin{split}
&\mathcal{M}_{\widehat{K}}=\widehat{\mathcal{V}}^{-T}\widehat{\mathcal{V}}^{-1},\quad \mathcal{S}^{(1,0)}_{\widehat{K}}={\mathcal{M}_{\widehat{K}}} {\mathcal{D}}^{(1,0)}_{\widehat{K}},\quad {\mathcal{S}}^{(0,1)}_{\widehat{K}}={\mathcal{M}_{\widehat{K}}} {\mathcal{D}}^{(0,1)}_{\widehat{K}},\\
&{\mathcal{S}}^{(2,0)}_{\widehat{K}}={\mathcal{M}_{\widehat{K}}} ({\mathcal{D}}^{(1,0)}_{\widehat{K}})^2,\quad {\mathcal{S}}^{(0,2)}_{\widehat{K}}={\mathcal{M}_{\widehat{K}}} ({\mathcal{D}}^{(0,1)}_{\widehat{K}})^2,\quad {\mathcal{S}}^{(1,1)}_{\widehat{K}}={\mathcal{M}_{\widehat{K}}}{\mathcal{D}}^{(1,0)}_{\widehat{K}}{\mathcal{D}}^{(0,1)}_{\widehat{K}}.
\end{split}
\end{equation}
Here, the differentiation matrices ${\mathcal{D}}^{(1,0)}_{\widehat{K}}$ and ${\mathcal{D}}^{(0,1)}_{\widehat{K}}$ are defined as
\begin{equation}
{\mathcal{D}}^{(1,0)}_{\widehat{K}}=\widehat{\mathcal{V}}^{(1,0)}\widehat{\mathcal{V}}^{-1},\quad {\mathcal{D}}^{(0,1)}_{\widehat{K}}=\widehat{\mathcal{V}}^{(0,1)}\widehat{\mathcal{V}}^{-1}.
\end{equation}
We denote by $\mathcal{J}^K \in \mathbb{R}^{2 \times 2}$ the Jacobian matrix of the affine mapping $\mathbb{T}^K : K \rightarrow \widehat{K}$, i.e., $\mathcal{J}^K = \frac{\partial \widehat{\mathbf{x}}}{\partial \mathbf{x}}$, where $\widehat{\mathbf{x}} = \mathbb{T}^K(\mathbf{x})$ for $\mathbf{x} \in K$.
Based on this transformation, the local matrices in \eqref{eq:LocalMatrices1} on the element $K \in \mathcal{T}$ can be computed as
\begin{equation}
\begin{split}
&\mathcal{M}_K=\frac{1}{\det{\mathcal{J}^K}}{\mathcal{M}_{\widehat{K}}},\\
&\mathcal{S}^{(1,0)}_K=\frac{1}{\det{\mathcal{J}^K}}(\mathcal{J}_{11}^K{\mathcal{S}}^{(1,0)}_{\widehat{K}}+\mathcal{J}_{21}^K{\mathcal{S}}^{(0,1)}_{\widehat{K}}),\\
&\mathcal{S}^{(0,1)}_K=\frac{1}{\det{\mathcal{J}^K}}(\mathcal{J}^K_{12}{\mathcal{S}}^{(1,0)}_{\widehat{K}}+\mathcal{J}^K_{22}{\mathcal{S}}^{(0,1)}_{\widehat{K}}),\\
&\mathcal{S}_K^{(2,0)}=\frac{1}{\det\mathcal{J}^K}((\mathcal{J}_{11}^K)^2{\mathcal{S}}^{(2,0)}_{\widehat{K}}+2\mathcal{J}^K_{11}\mathcal{J}^K_{21}{\mathcal{S}}^{(1,1)}_{\widehat{K}}+(\mathcal{J}_{21}^K)^2{\mathcal{S}}^{(0,2)}_{\widehat{K}}),\\
&\mathcal{S}_K^{(0,2)}=\frac{1}{\det\mathcal{J}^K}((\mathcal{J}_{12}^K)^2{\mathcal{S}}^{(2,0)}_{\widehat{K}}+2\mathcal{J}_{12}^K\mathcal{J}_{22}^K{\mathcal{S}}^{(1,1)}_{\widehat{K}}+(\mathcal{J}_{22}^K)^2{\mathcal{S}}^{(0,2)}_{\widehat{K}}),\\
\end{split}
\end{equation}

Some local matrices in \eqref{eq:LocalMatrices2} involve two neighboring elements.
To facilitate their computation on the reference element, we first define the relevant geometric quantities on $\widehat{K}$.
The vertices and their opposite edges are denoted by
\[
\widehat{\mathbf{v}}_1 = (1, 0)^T, \quad \widehat{\mathbf{v}}_2 = (0, 1)^T, \quad \widehat{\mathbf{v}}_3 = (0, 0)^T,
\]
and
\[
\begin{aligned}
\widehat{e}_1 &= \{(1 - s)\widehat{\mathbf{v}}_2 + s\widehat{\mathbf{v}}_3 : s \in [0, 1]\}, \\
\widehat{e}_2 &= \{(1 - s)\widehat{\mathbf{v}}_3 + s\widehat{\mathbf{v}}_1 : s \in [0, 1]\}, \\
\widehat{e}_3 &= \{(1 - s)\widehat{\mathbf{v}}_1 + s\widehat{\mathbf{v}}_2 : s \in [0, 1]\},
\end{aligned}
\]
respectively.
We then define the boundary matrices $\widehat{\mathcal{B}}_{m}$, $\widehat{\mathcal{B}}_{m}^{(1,0)}$, $\widehat{\mathcal{B}}_{m}^{(0,1)}$, $\widehat{\mathcal{B}}_{m,n}$, $\widehat{\mathcal{B}}_{m,n}^{(1,0)}$, and $\widehat{\mathcal{B}}_{m,n}^{(0,1)}$, for $m, n = 1, 2, 3$, on $\partial \widehat{K}$ as
\begin{equation}
\begin{split}
&(\widehat{\mathcal{B}}_{m})_{ij}=\int_{0}^{1}\widehat{\ell}_{i}((1-s)\widehat{\mathbf{v}}_{m+1}+s\widehat{\mathbf{v}}_{m+2})\widehat{\ell}_j((1-s)\widehat{\mathbf{v}}_{m+1}+s\widehat{\mathbf{v}}_{m+2}) ds,\\
&(\widehat{\mathcal{B}}^{(1,0)}_{m})_{ij}=\int_{0}^{1}\widehat{\ell}_{i}((1-s)\widehat{\mathbf{v}}_{m+1}+s\widehat{\mathbf{v}}_{m+2})\frac{\partial \widehat{\ell}_j}{\partial\widehat{x}_1 } ((1-s)\widehat{\mathbf{v}}_{m+1}+s\widehat{\mathbf{v}}_{m+2}) ds,\\
&(\widehat{\mathcal{B}}^{(0,1)}_{m})_{ij}=\int_{0}^{1}\widehat{\ell}_{i}((1-s)\widehat{\mathbf{v}}_{m+1}+s\widehat{\mathbf{v}}_{m+2})\frac{\partial \widehat{\ell}_j}{\partial\widehat{x}_{2} }((1-s)\widehat{\mathbf{v}}_{m+1}+s\widehat{\mathbf{v}}_{m+2})  ds,\\
&(\widehat{\mathcal{B}}_{m,n})_{ij}=\int_{0}^{1}\widehat{\ell}_{i}((1-s)\widehat{\mathbf{v}}_{m+1}+s\widehat{\mathbf{v}}_{m+2})\widehat{\ell}_j((1-s)\widehat{\mathbf{v}}_{n+2}+s\widehat{\mathbf{v}}_{n+1}) ds,\\
&(\widehat{\mathcal{B}}^{(1,0)}_{m,n})_{ij}=\int_{0}^{1}\widehat{\ell}_{i}((1-s)\widehat{\mathbf{v}}_{m+1}+s\widehat{\mathbf{v}}_{m+2})\frac{\partial \widehat{\ell}_j}{\partial\widehat{x}_1 }((1-s)\widehat{\mathbf{v}}_{n+2}+s\widehat{\mathbf{v}}_{n+1})  ds,\\
&(\widehat{\mathcal{B}}^{(0,1)}_{m,n})_{ij}=\int_{0}^{1}\widehat{\ell}_{i}((1-s)\widehat{\mathbf{v}}_{m+1}+s\widehat{\mathbf{v}}_{m+2})\frac{\partial \widehat{\ell}_j}{\partial \widehat{x}_2}((1-s)\widehat{\mathbf{v}}_{n+2}+s\widehat{\mathbf{v}}_{n+1}) ds.
\end{split}
\end{equation}
Finally, we denote the LGL nodes on the reference interval $\widehat{I}=[0, 1]$ and the corresponding quadrature weights by $\{\widehat{r}_{i}\}_{i=1}^{k+1}$ and $\{\widehat{\omega}_{i}\}_{i=1}^{k+1}$, respectively, where $\sum_{i=1}^{k+1}\widehat{\omega}_{i}=1$.

Since we adopt Lagrange nodes that coincide with the $(k+1)$-point LGL nodes on each edge, the Lagrange basis function $\widehat{\ell}_i$ is zero on an edge if $\widehat{\mathbf{x}}_i$ does not reside on that edge. 
This results in the boundary matrices $\widehat{\mathcal{B}}_{m}$, 
$\widehat{\mathcal{B}}_{m,n}$, 
$\widehat{\mathcal{B}}_{m}^{(1,0)}$, 
$\widehat{\mathcal{B}}_{m}^{(0,1)}$,
$\widehat{\mathcal{B}}_{m,n}^{(1,0)}$, and 
$\widehat{\mathcal{B}}_{m,n}^{(0,1)}$ exhibiting a sparse structure.
More specifically, the non-zero entries in $\widehat{\mathcal{B}}_{m}$ and 
$\widehat{\mathcal{B}}_{m,n}$ consist of only $(k+1)^2$ terms out of $N_k^2$ and are identical to those in the one-dimensional mass matrix  ${\mathcal{M}}_{\widehat{I}}$, whose entries are defined as 
\begin{equation}
({\mathcal{M}}_{\widehat{I}})_{i,j}=\int_{0}^{1}\widehat{\ell_{i}}^{\widehat{I}}(\widehat{r})\widehat{\ell}_{j}^{\widehat{I}}(\widehat{r})d\widehat{r}, \quad i,j=1,\ldots, k+1,
\end{equation}
and other matrices have entries given by
\begin{equation}
\begin{split}
(\widehat{\mathcal{B}}_{m}^{(1,0)})_{ij}=
\begin{cases}
\widehat{\omega}_\star\frac{\partial \widehat{\ell}_j}{\partial\widehat{x}_1 }(\widehat{\mathbf{x}}_i), &\widehat{\mathbf{x}}_i\in \widehat{e}_m,\\
0, &\text{otherwise},
\end{cases}
\quad
(\widehat{\mathcal{B}}_{m}^{(0,1)})_{ij}=
\begin{cases}
\widehat{\omega}_\star\frac{\partial \widehat{\ell}_j}{\partial\widehat{x}_2 }(\widehat{\mathbf{x}}_i), &\widehat{\mathbf{x}}_i\in \widehat{e}_m,\\
0, &\text{otherwise},
\end{cases}\\
(\widehat{\mathcal{B}}_{m,n}^{(1,0)})_{ij}=
\begin{cases}
\widehat{\omega}_\star\frac{\partial \widehat{\ell}_j}{\partial\widehat{x}_1 }(\widehat{\mathbf{x}}_\star), &\widehat{\mathbf{x}}_i\in \widehat{e}_m,\\
0, &\text{otherwise},
\end{cases}
\quad
(\widehat{\mathcal{B}}_{m,n}^{(0,1)})_{ij}=
\begin{cases}
\widehat{\omega}_\star\frac{\partial \widehat{\ell}_j}{\partial\widehat{x}_2 }(\widehat{\mathbf{x}}_\star), &\widehat{\mathbf{x}}_i\in \widehat{e}_m,\\
0, &\text{otherwise},
\end{cases}
\end{split}
\end{equation}
where the subscript $\star$ denotes the index satisfying
\begin{align*}
\widehat{\mathbf{x}}_i=(1-\widehat{r}_{\star})\widehat{\mathbf{v}}_{m+1}+\widehat{r}_{\star}\widehat{\mathbf{v}}_{m+2},\quad \text{if}~~\widehat{\mathbf{x}}_i\in\widehat{e}_m,
\end{align*}
and
\begin{equation*}
\widehat{\mathbf{x}}_{\star}=(1-\widehat{r}_{\star})\widehat{\mathbf{v}}_{n+2}+\widehat{r}_{\star}\widehat{\mathbf{v}}_{n+1}.
\end{equation*}

Based on the above setup, the local matrices in \eqref{eq:LocalMatrices2} on the element $K\in\mathcal{T}$ can be computed as
\begin{equation}
\begin{split}
&\mathcal{B}_m^K=|e_m^K|\widehat{\mathcal{B}}_m,\\
&\mathcal{B}^{K(1,0)}_{m}=|e_m^K|(\mathcal{J}_{11}^K\widehat{\mathcal{B}}^{(1,0)}_{m}+\mathcal{J}_{21}^K\widehat{\mathcal{B}}^{(0,1)}_{m}),\\
&\mathcal{B}^{K(0,1)}_{m}=|e_m^K|(\mathcal{J}_{12}^K\widehat{\mathcal{B}}^{(1,0)}_{m}+\mathcal{J}_{22}^K\widehat{\mathcal{B}}^{(0,1)}_{m}),\\
&\mathcal{B}^{K, K'}_{m,n}=|e_m^K|\widehat{\mathcal{B}}_{m,n},\\
&\mathcal{B}^{K,K'(1,0)}_{m,n}=|e_m^K|(\mathcal{J}_{11}^{K'}\widehat{\mathcal{B}}^{(1,0)}_{m,n}+\mathcal{J}_{21}^{K'}\widehat{\mathcal{B}}^{(0,1)}_{m,n}),\\
&\mathcal{B}^{K,K'(0,1)}_{m,n}=|e_m^K|(\mathcal{J}_{12}^{K'}\widehat{\mathcal{B}}^{(1,0)}_{m,n}+\mathcal{J}_{22}^{K'}\widehat{\mathcal{B}}^{(0,1)}_{m,n}).
\end{split}
\end{equation}

\section{Expressions of the local matrices in \eqref{eq:DG_Diff_vect1D} }\label{appd:1DMatrixEquation}
The expressions for the local matrices in the DG formulation \eqref{eq:DG_Diff_vect1D} are given below, where $\beta \sim O(1/h)$ is the penalty parameter, chosen sufficiently large to ensure stability.
\begin{itemize}
    \item $k=1$:
\begin{equation}
\begin{split}
&D_{-1}=\frac{1}{h^2}\begin{pmatrix}
2 & -5\\
-1 & 4
\end{pmatrix}+\frac{\beta}{h^2}\begin{pmatrix}
0& 4\\
0& -2
\end{pmatrix},\\
&D_{0}=\frac{1}{h^2}\begin{pmatrix}
0& 0\\
0& 0
\end{pmatrix}+\frac{\beta}{h^2}\begin{pmatrix}
-4& 2\\
2& -4
\end{pmatrix},\\
&D_{1}=\frac{1}{h^2}\begin{pmatrix}
4& -1\\
-5& 2
\end{pmatrix}+\frac{\beta}{h^2}\begin{pmatrix}
-2& 0\\
4& 0
\end{pmatrix}.
\end{split}
\end{equation}

\item $k=2$:
\begin{equation}
\begin{split}
&D_{-1}=\frac{1}{h^2}\begin{pmatrix}
-\frac{9}{2}& 18& -\frac{63}{2}\\
\frac{3}{4}& -3& \frac{39}{4} \\
-\frac{3}{2}& 6& -\frac{33}{2}
\end{pmatrix}
+\frac{\beta}{h^2}\begin{pmatrix}
0& 0& 9\\
0& 0& -\frac{3}{2}\\
0& 0& 3
\end{pmatrix},\\
&D_{0}=\frac{1}{h^2}\begin{pmatrix}
7& 16& 7\\
-\frac{1}{2}& -14& -\frac{1}{2}\\
7& 16& 7
\end{pmatrix}
+\frac{\beta}{h^2}\begin{pmatrix}
-9& 0& -3\\
\frac{3}{2}& 0& \frac{3}{2}\\
-3& 0& -9
\end{pmatrix},\\
&D_{1}=\frac{1}{h^2}\begin{pmatrix}
-\frac{33}{2}& 6& -\frac{3}{2}\\
\frac{39}{4}& -3& \frac{3}{4}\\
-\frac{63}{2}& 18& -\frac{9}{2}
\end{pmatrix}+\frac{\beta}{h^2}\begin{pmatrix}
3& 0& 0\\
-\frac{3}{2}& 0& 0\\
9& 0& 0
\end{pmatrix}.
\end{split}
\end{equation}

\item $k=3$:
\begin{equation}
\begin{split}
&D_{-1}=\frac{1}{h^2}\begin{pmatrix}
8& -20 (-1 + \sqrt{5})& 20 (1 + \sqrt{5})& -108\\
-\frac{2}{\sqrt{5}}& 5 - \sqrt{5}& -5 - \sqrt{5}& 3 + \frac{51}{\sqrt{5}}\\
\frac{2}{\sqrt{5}}& -5 + \sqrt{5}& 5 + \sqrt{5}& 3 - \frac{51}{\sqrt{5}}\\
-2& -5 + 5 \sqrt{5}& -5 (1 + \sqrt{5})& 42
\end{pmatrix}+\frac{\beta}{h^2}\begin{pmatrix}
0& 0& 0& 16\\
0& 0& 0& -\frac{4}{\sqrt{5}}\\
0& 0& 0& \frac{4}{\sqrt{5}}\\
0& 0& 0& -4
\end{pmatrix},\\
&D_{0}=\frac{1}{h^2}\begin{pmatrix}
30& 10(1 + \sqrt{5})& -10(-1 + \sqrt{5})& -20\\
2 - 2\sqrt{5}& -30& 20& 2(1 + \sqrt{5})\\
2(1 + \sqrt{5})& 20& -30& 2 - 2\sqrt{5}\\
-20& -10(-1 + \sqrt{5})& 10(1 + \sqrt{5})& 30
\end{pmatrix}+\frac{\beta}{h^2}\begin{pmatrix}
-16& 0& 0& 4\\
\frac{4}{\sqrt{5}}& 0& 0& -\frac{4}{\sqrt{5}}\\
-\frac{4}{\sqrt{5}}& 0& 0& \frac{4}{\sqrt{5}}\\
4& 0& 0& -16
\end{pmatrix},\\
&D_{1}=\frac{1}{h^2}\begin{pmatrix}
42& -5 (1 + \sqrt{5})& -5 + 5\sqrt{5}& -2\\
3 - \frac{51}{\sqrt{5}}& 5 + \sqrt{5}& -5 + \sqrt{5}& \frac{2}{\sqrt{5}}\\
3 + \frac{51}{\sqrt{5}}& -5 - \sqrt{5}& 5 - \sqrt{5}& -\frac{2}{\sqrt{5}}\\
-108& 20(1 + \sqrt{5})& -20(-1 + \sqrt{5})& 8
\end{pmatrix}+\frac{\beta}{h^2}\begin{pmatrix}
-4& 0& 0& 0\\
\frac{4}{\sqrt{5}}& 0& 0& 0\\
-\frac{4}{\sqrt{5}}& 0& 0& 0\\
16& 0& 0& 0
\end{pmatrix}.
\end{split}
\end{equation}

\item $k=4$:
\begin{equation}
\begin{split}
D_{-1}=&\frac{1}{h^2}\begin{pmatrix}
-\frac{25}{2}& -\frac{175}{12} (-7 + \sqrt{21})& -\frac{200}{3}& 
   \frac{175}{12} (7 + \sqrt{21})& -275\\
\frac{15}{14}& \frac{5}{4} (-7 + \sqrt{21})& \frac{40}{7}& -\frac{5}{4} (7 + \sqrt{21})& \frac{330}{7} + 15 \sqrt{\frac{3}{7}}\\
-\frac{15}{16}& -\frac{35}{32} (-7 + \sqrt{21})& -5& 
   \frac{35}{32} (7 + \sqrt{21})& -\frac{285}{8}\\
\frac{15}{14}& \frac{5}{4} (-7 + \sqrt{21})& \frac{40}{7}& -\frac{5}{4} (7 + \sqrt{21})& 
   \frac{330}{7} - 15 \sqrt{\frac{3}{7}}\\
-\frac{5}{2}& -\frac{35}{12} (-7 + \sqrt{21})& -\frac{40}{3}& \frac{35}{12} (7 + \sqrt{21})& -85
\end{pmatrix}+\frac{\beta}{h^2}\begin{pmatrix}
0& 0& 0& 0& 25\\
0& 0& 0& 0& -\frac{15}{7}\\
0& 0& 0& 0& \frac{15}{8}\\
0& 0& 0& 0& -\frac{15}{7}\\
0& 0& 0& 0& 5
\end{pmatrix},\\
D_{0}=&\frac{1}{h^2}\begin{pmatrix}
\frac{165}{2}& \frac{7}{6}(35 + 4 \sqrt{21})& \frac{16}{3}& -\frac{7}{6} (-35 + 4 \sqrt{21})& \frac{81}{2}\\
-\frac{135}{14} + 6 \sqrt{\frac{3}{7}}& -\frac{385}{6}& \frac{688}{21}& -\frac{133}{6}& -\frac{135}{14} - 6 \sqrt{\frac{3}{7}}\\
\frac{207}{16}& \frac{1519}{48}& -\frac{110}{3}& \frac{1519}{48}& \frac{207}{16}\\
-\frac{135}{14} - 6 \sqrt{\frac{3}{7}}& -\frac{133}{6}& \frac{688}{21}& -\frac{385}{6}& -\frac{135}{14} + 6 \sqrt{\frac{3}{7}}\\
\frac{81}{2}& -\frac{7}{6} (-35 + 4 \sqrt{21})& \frac{16}{3}& \frac{7}{6} (35 + 4 \sqrt{21})& \frac{165}{2}
\end{pmatrix}\\
&+\frac{\beta}{h^2}\begin{pmatrix}
-25& 0& 0& 0& -5\\
\frac{15}{7}& 0& 0& 0& \frac{15}{7}\\
-\frac{15}{8}& 0& 0& 0& -\frac{15}{8}\\
\frac{15}{7}& 0& 0& 0& \frac{15}{7}\\
-5& 0& 0& 0& -25
\end{pmatrix},\\
D_{1}=&\frac{1}{h^2}\begin{pmatrix}
-85& \frac{35}{12} (7 + \sqrt{21})& -\frac{40}{3}& -\frac{35}{12} (-7 + \sqrt{21})& -\frac{5}{2}\\
\frac{330}{7} - 15 \sqrt{\frac{3}{7}}& -\frac{5}{4} (7 + \sqrt{21})& \frac{40}{7}& \frac{5}{4} (-7 + \sqrt{21})& \frac{15}{14}\\
-\frac{285}{8}& \frac{35}{32} (7 + \sqrt{21})& -5& -\frac{35}{32} (-7 + \sqrt{21})& -\frac{15}{16}\\
\frac{330}{7} + 15 \sqrt{\frac{3}{7}}& -\frac{5}{4}(7 + \sqrt{21})& \frac{40}{7}& \frac{5}{4} (-7 + \sqrt{21})& \frac{15}{14}\\
-275& \frac{175}{12} (7 + \sqrt{21})& -\frac{200}{3}& -\frac{175}{12} (-7 + \sqrt{21})& -\frac{25}{2}
\end{pmatrix}+\frac{\beta}{h^2}\begin{pmatrix}
5& 0& 0& 0& 0\\
-\frac{15}{7}& 0& 0& 0& 0\\
\frac{15}{8}& 0& 0& 0& 0&\\
-\frac{15}{7}& 0& 0& 0& 0&\\
25& 0& 0& 0& 0
\end{pmatrix}.
\end{split}
\end{equation}

\end{itemize}

\end{appendices}

\end{document}